\newtheorem{Theorem}{Theorem}[section]
\newtheorem{Definition}[Theorem]{Definition}
\newtheorem{Proposition}[Theorem]{Proposition}
\newtheorem{Lemma}[Theorem]{Lemma}
\newtheorem{Remark}[Theorem]{Remark}
\newtheorem{Example}[Theorem]{Example}
\newtheorem{Hypothesis}{Hypothesis}
\numberwithin{equation}{section}
\begin{document}
\renewcommand{\figurename}{Fig.1}

\def\le{\left}
\def\r{\right}
\def\cost{\mbox{const}}
\def\a{\alpha}
\def\d{\delta}
\def\ph{\varphi}
\def\e{\epsilon}
\def\la{\lambda}
\def\si{\sigma}
\def\La{\Lambda}
\def\B{{\cal B}}
\def\A{{\mathcal A}}
\def\L{{\mathcal L}}
\def\O{{\mathcal O}}
\def\bO{\overline{{\mathcal O}}}
\def\F{{\mathcal F}}
\def\K{{\mathcal K}}
\def\H{{\mathcal H}}
\def\D{{\mathcal D}}
\def\C{{\mathcal C}}
\def\M{{\mathcal M}}
\def\N{{\mathcal N}}
\def\G{{\mathcal G}}
\def\T{{\mathcal T}}
\def\R{{\mathcal R}}
\def\I{{\mathcal I}}

\def\bw{\overline{W}}
\def\phin{\|\varphi\|_{0}}
\def\s0t{\sup_{t \in [0,T]}}
\def\lt{\lim_{t\rightarrow 0}}
\def\iot{\int_{0}^{t}}
\def\ioi{\int_0^{+\infty}}
\def\ds{\displaystyle}
\def\pag{\vfill\eject}
\def\fine{\par\vfill\supereject\end}
\def\acapo{\hfill\break}

\def\beq{\begin{equation}}
\def\eeq{\end{equation}}
\def\barr{\begin{array}}
\def\earr{\end{array}}
\def\vs{\vspace{.1mm}   \\}
\def\rd{\reals\,^{d}}
\def\rn{\reals\,^{n}}
\def\rr{\reals\,^{r}}
\def\bD{\overline{{\mathcal D}}}
\newcommand{\dimo}{\hfill \break {\bf Proof - }}
\newcommand{\nat}{\mathbb N}
\newcommand{\E}{\mathbb E}
\newcommand{\Pro}{\mathbb P}
\newcommand{\com}{{\scriptstyle \circ}}
\newcommand{\reals}{\mathbb R}

\title{Averaging principle for non autonomous slow-fast systems of stochastic RDEs: the almost periodic case\thanks{{\em Key words}: Averaging principle, stochastic reaction-diffusion systems, evolution families of measures, almost periodic functions}}

\author{Sandra Cerrai\thanks{Partially supported by the NSF grant DMS 1407615.}\\
\normalsize University of Maryland, College Park\\ United States
\and
Alessandra Lunardi\thanks{Partially supported by the PRIN 2010/11 project 2010MXMAJR}\\
\normalsize Universit\`a di Parma\\ Italy}
\date{}

\maketitle

\begin{abstract} 
We study the validity of an averaging principle for a slow-fast system of stochastic reaction diffusion equations. We assume here that the coefficients of the fast equation depend on time, so that the classical formulation of the averaging principle in terms of the invariant measure of the fast equation is not anymore available. As an alternative, we introduce the time depending evolution family of measures associated with the fast equation. Under the assumption that the coefficients in the fast equation are almost periodic, the evolution family of measures is almost periodic. This allows to identify the appropriate averaged equation and prove the validity of the averaging limit.  
 
\end{abstract}

\section{Introduction}
\label{sec1}
We deal with a class of
systems of stochastic partial differential equations of
reaction-diffusion type on  a bounded domain $D$ of
$\mathbb{R}^d$, with $d\geq 1$,
\begin{equation}
\label{eq0} \le\{
\begin{array}{l}
\ds{\frac{\partial u_\e}{\partial t}(t,\xi)=\mathcal{A}_1
u_\e(t,\xi)+b_1(\xi,u_\e(t,\xi),v_\e(t,\xi))+g_1(\xi,u_\e(t,\xi))\,\frac{\partial
w^{Q_1}}{\partial t}(t,\xi),}\\
\vs 
\ds{\frac{\partial v_\e}{\partial t}(t,\xi)=\frac
1\e\le[(\mathcal{A}_2(t/\e)-\a)
v_\e(t,\xi)+b_2(t/\e,\xi,u_\e(t,\xi),v_\e(t,\xi))\r] }\\
\vs\ds{\ \ \ \ \ \ \ \ \ \ \ \ \ \ \ +\frac 1{\sqrt{\e}}\,
g_2(t/\e,\xi,v_\e(t,\xi))\,\frac{\partial
w^{Q_2}}{\partial t}(t,\xi),}\\
\vs \ds{u_\e(0,\xi)=x(\xi),\ \ \ \ v_\e(0,\xi)=y(\xi),\ \ \ \ \
\xi
\in\,D,}\\
\vs \ds{ \mathcal{N}_{1} u_\e\,(t,\xi)=\mathcal{N}_{2}
v_\e\,(t,\xi)=0,\ \ \ \ t\geq 0,\ \ \ \ \xi \in\,\partial D,}
\end{array}\r.
\end{equation}
where $\e$ is  a small positive parameter and $\a$ is  fixed  positive constant. The operator $\mathcal{A}_2$ and the functions $b_2$ and $g_2$ in the fast equation are  allowed to depend on time. We assume that $\mathcal{A}_2$ is periodic, and $b_2$, $g_2$ are almost periodic in time.

In a series of papers (\cite{av2}, \cite{pol} and \cite{cf}),    the validity of an averaging principle  for some classes of slow-fast stochastic reaction-diffusion systems has been investigated, in the case the fast equation coefficients do not depend on time. It has been proved that the slow motion $u_\e$ converges in $C([0,T];L^2(D))$, as $\e\downarrow 0$, to the solution $\bar{u}$ of the so-called averaged equation, obtained by taking the average of the coefficients $b_1$ and $g_1$ (in case they both depend on the fast motion) with respect to the invariant measure of the fast motion, with frozen slow component (see next formulas \eqref{aveintro} and \eqref{bintro}). Moreover, in \cite{normal} the fluctuations of $u_\e$ around the averaged motion $\bar{u}$ have been studied. More precisely, it has been proven that  the normalized difference $z_\e:=(u_\e-\bar{u})/\sqrt{\e}$ is weakly convergent in $C([0,T];L^2(D))$, as $\e\downarrow 0$, to a process $z$, which is given in terms of a Gaussian process whose covariance is explicitly described. Other aspects of the averaging principle for slow-fast systems of SPDEs have been  studied by several other authors, see e.g. \cite{drw}, \cite{fd}, \cite{kukpia}, \cite{massei} and \cite{rw}.

\medskip

Unlike in all the above mentioned papers, where only the time-independent case has  been considered, in the present paper we deal with non-autonomous systems of reaction-diffusion equations of Hodgkin-Huxley or Ginzburg -Landau type, perturbed by a Gaussian noise of multiplicative type.
Such systems arise in many areas in biology and physics and have attracted considerable attention. In particular, in  neurophysiology  the Hodgkin-Huxley model, and its simplified version given by the Fitzhugh-Nagumo   system, are used to describe the activation and deactivation dynamics of a spiking neuron (see e.g. \cite{tuck1} for a mathematical introduction to this theory). The classical Hodgkin-Huxley model has time-independent coefficients, but, as mentioned by Wainrib in \cite{wainrib}, where  an analogous problem for finite dimensional systems has been studied, systems with time-dependent coefficients are particularly important to study models of learning in neuronal activity and, for this reason  are  worth of a thorough analysis.

Such analysis does not follow in a straightforward manner from   results already available in the mathematical literature. On the contrary, it requires the introduction of some new ideas and techniques. 

Actually, in the standard setting of time independent coefficients, the averaged motion $\bar{u}$ solves the equation
\begin{equation}
\label{aveintro}
\le\{\begin{array}{l}
\ds{\frac{\partial \bar{u}}{\partial t}(t,\xi)=\mathcal{A}_1
\bar{u}(t,\xi)+\bar{B}(\bar{u}(t))(\xi)+
g_1(\xi,\bar{u}(t,\xi))\,\frac{\partial
w^{Q_1}}{\partial t}(t,\xi), }\\
\vs \ds{\bar{u}(0,\xi)=x(\xi),\ \ \ \ \
\xi
\in\,D,\ \ \ \ \ \ \mathcal{N}_{1}
\bar{u}(t,\xi)=0,\ \ \ \ t\geq 0,\ \ \ \ \xi \in\,\partial D.}
\end{array}\r.
\end{equation}
In the equation above, the averaged non-local coefficient $\bar{B}$ is defined by
\begin{equation}
\label{bintro}
\bar{B}(x)=\int_{ C(\bar{D})}B_1(x,z)\,\mu^x(dz),\ \ \ \ \ x \in\,C(\bar{D}),
\end{equation}
where  $\mu^x$ is the invariant measure of the fast equation with frozen slow component $x  \in\,C(\bar{D})$
\begin{equation}
\label{fastintro}
\le\{\begin{array}{l}
\ds{\frac{\partial v^{x,y}}{\partial t}(t,\xi)=(\mathcal{A}_2-\a)
v^{x,y}(t,\xi)+b_2(\xi,x(\xi),v^{x,y}(t,\xi))+
g_2(\xi,x(\xi),v^{x,y}(t,\xi))\,\frac{\partial
w^{Q_2}}{\partial t}(t,\xi), }\\
\vs \ds{v^{x,y}(s,\xi)=y(\xi),\ \ \ \ \
\xi
\in\,D,\ \ \ \ \ \ \mathcal{N}_{2}
v^{x,y}(t,\xi)=0,\ \ \ \ t\geq 0,\ \ \ \ \xi \in\,\partial D,}
\end{array}\r.
\end{equation}
and  $B_1(x,z)(\xi)=b_1(\xi,x(\xi),z(\xi))$, for any $x, z \in\,C(\bar{D})$ and $\xi \in\,\bar{D}$.

Furthermore, as proven in \cite{pol}, 
\begin{equation}
\label{nl125-intro}
\E\,\le|\frac 1T\int_t^{t+T}B_1(x,v^{x,y}(s))\,ds-\bar{B}(x)\r|_{C(\bar{D})}\leq \a(T)\le(1+|x|_{C(\bar{D})}^{\kappa_1}+|y|_{C(\bar{D})}^{\kappa_2}\r),
\end{equation}
for some  function $\a:[0,\infty)\to [0,\infty)$ such that 
\[\lim_{T\to \infty }\a(T)=0.\]

Here, as $\mathcal{A}_2$, $b_2$ and $g_2$ depend on time, we do not have anymore an invariant measure for the fast equation with frozen slow component. Nevertheless, we can prove that there exists an {\em evolution system of probability measures} $\{\mu^x_yt\,;\,t \in\,\mathbb{R}\}$ on $C(\bar{D})$ for equation \eqref{fastintro}, indexed by $t \in\,\reals$. This means that $\mu^x_t$ is a probability measure on $C(\bar{D})$, for any $t \in\,\reals$, and, if $P^x_{s,t}$ is the transition evolution operator associated with equation \eqref{fastintro},
it holds
\[
\int_{C(\bar{D})} P^x_{s,t}\varphi(y)\,\mu^x_s(dy)=\int_{C(\bar{D})} \varphi(y)\,\mu^x_t(dy),\ \ \ \ \ s<t,
\]
for every $\varphi \in\,C_b(C(\bar{D}))$. Moreover, we  show that, under suitable dissipativity conditions, 
\begin{equation}
\label{gap-intro}
\le|P_{s,t}^x\varphi(y)-\int_{C(\bar{D})}\varphi(z)\,\mu^x_t(dz)\r|\leq \|\varphi\|_{C^1_b(C(\bar{D}))}\,e^{-\d (t-s)}\le(1+|x|_{C(\bar{D})}+|y|_{C(\bar{D})}\r),\end{equation}
for some positive constant $\d>0$.

Now, the next fundamental step consists in identifying an  averaged motion $\bar{u}$ as the solution of a suitable averaged equation. Unfortunately, due to the lack of an invariant measure, we do not have anything like \eqref{bintro}.
Still, due to the assumption that $\mathcal{A}_2$ is periodic and both $b_2$ and $g_2$ are almost periodic in time, and to the fact that for any fixed $R>0$ the family of measures 
\[\La_R:=\le\{\mu^x_t\,;\, t \in\,\mathbb{R},\ x \in\,B_R(C(\bar{D}))\r\}\]
is tight in $\mathcal{P}(C(\bar{D}))$, by proceeding as in \cite{dpt} we can prove that the mapping
\[t \in\,\mathbb{R}\mapsto \mu^x_t \in\,\mathcal{P}(C(\bar{D})),\]
is almost periodic, for every $x \in\,C(\bar{D})$. 

This allows us to  find an alternative way to define $\bar{B}$. 
Actually, we  prove that 
the family of functions 
\[
\le\{t \in\,\reals \mapsto   \int_EB_1(x,z)\,\mu^x_t(dz) \in\,C(\bar{D})\,:\,x \in\,K\r\}
\]
is uniformly almost periodic. Then, because of almost periodicity,  we can define
\begin{equation}
\label{bbar}
\bar{B}(x):=\lim_{T\to\infty}\frac 1T\int_0^T\int_{C(\bar{D})}B_{1}(x,y)\,\mu_t^{x}(dy)\,dt,\ \ \ \ x \in\,C(\bar{D}).\end{equation}
Of course, in order to prove that  equation \eqref{aveintro}, with $\bar{B}$ defined as in \eqref{bbar}, is well posed in $C([0,T];C(\bar{D}))$, we need that $\bar{B}$ satisfies some nice properties. Since $B_1$ is not Lipschitz continuous,  there is no hope that $\bar{B}$ is Lipschitz continuous. Nonetheless, we show that, as a consequence of the monotonicity of $B_1$ and of some nice properties we have for the evolution family of measures  $\{\mu^x_t\}_{t \in\,\mathbb{R}}$, the mapping $\bar{B}:C(\bar{D})\to C(\bar{D})$ is locally Lipschitz continuous and some monotonicity holds. And this guarantees the well posedness of equation \eqref{aveintro}.

Next, in the same spirit of \eqref{nl125-intro}, by using  \eqref{gap-intro} we  show that
\[
\E\,\le|\frac 1T\int_s^{s+T}B_1(x,v^{x}(t;s,y))\,dt-\bar{B}(x)\r|^2_{C(\bar{D})}\leq\frac cT\le(1+|x|_{C(\bar{D})}^{\kappa_1}+|y|_{C(\bar{D})}^{\kappa_2}\r)+\a(T,x),\]
for some mapping $\a:[0,\infty)\times C(\bar{D})\to [0,+\infty)$ such that 
\[\lim_{T\to \infty}\,\a(T,x)=0.\]
This allows us to adapt to the present situation the classical Khasminskii method, based on localization in time,  and to prove the main result of this paper, namely that for any fixed $\eta>0$
\begin{equation}
\label{limav-intro}
\lim_{\e\to 0}\ \Pro\le(\sup_{t \in\,[0,T]}|u_\e(t)-\bar{u}(t)|_{C(\bar{D})}>\eta\r)=0,
\end{equation}
where $\bar{u}$ is the solution of the averaged equation \eqref{aveintro}, with $\bar{B}$ defined as in \eqref{bbar}.

Notice that here, due to the polynomial growth of the coefficients, we have also to proceed with a localization in space, which requires, among other things, a suitable approximation for the family of measures $\{\mu^x_t\}_{t \in\,\mathbb{R}}$.

\section{Notations, hypotheses and a few preliminary results}
\label{sec2}

Let $D$ be a  bounded domain of $\reals^d$, with $d\geq 1$, having  smooth boundary.
Throughout the paper, we shall denote by $H$  the separable  Hilbert space
$L^2(D)$, endowed with the scalar product
\[\le<x,y\r>_H=\int_Dx(\xi)y(\xi)\,d\xi,\]
 and with the corresponding norm
$|\cdot|_H$. We shall denote by ${\cal H}$  the product space $H\times H$, endowed with the scalar product
\[\le<x,y\r>_{{\cal H}}=\int_D\le<x(\xi),y(\xi)\r>_{\reals^2}\,d\xi=\le<x_1,y_1\r>_H+\le<x_2,y_2\r>_H\]
and the corresponding norm $|\cdot|_{{\cal H}}$.

Next, we shall denote by $E$ the Banach space $C(\bar{D})$,
endowed with the sup-norm 
\[|x|_E=\sup_{\xi \in\,\bar{D}}|x(\xi)|,\]
 and the duality $\le<\cdot,\cdot\r>_E$. The product space $E\times E$ will be endowed with the norm
 \[|x|_{E\times E}=\le(\,|x_1|_E^2+|x_2|_E^2\r)^{\frac 12},\]
 and the corresponding duality $\le<\cdot,\cdot\r>_{E\times E}$.
 Finally, for any $\theta \in\,(0,1)$ we shall denote by $C^\theta(\bar{D})$ the subspace of $\theta$-H\"{o}lder continuous functions, endowed with the usual norm
 \[|x|_{C^\theta(\bar{D})}=|x|_E+[x]_\theta=|x|_E+\sup_{\substack{\xi, \eta \in\,\bar{D}\\\xi\neq \eta}}\frac{|x(\xi)-x(\eta)|}{|\xi-\eta|^\theta}.\]

 For any $p \in\,[1,\infty]$, with $p\neq 2$, the norms in $L^p(D)$ and $L^p(D)\times L^p(D)$ will be both denoted by $|\cdot|_p$. If $\d>0$ and $p<\infty$, we will denote by $|\cdot|_{\d,p}$ the norm in $W^{\d,p}(D)$
 \begin{equation}
 \label{new94}
 |x|_{\d,p}:=|x|_p+\le(\int_D\int_D\frac{|x(\xi)-x(\eta)|^p}{|\xi-\eta|^{\d p+d}}\,d\xi\,d\eta\r)^{\frac 1p}.
 \end{equation}

Now, we introduce some
notations which we will use in what follows (for all details we refer e.g. to \cite[Appendix D]{dpz1} and also to \cite[Appendix A]{tesi}). For any $x \in\,E$, we denote
\[M_x=\le\{\,\xi \in\,\bar{D}\,:\, |x(\xi)|=|x|_E\,\r\}.\]
Moreover, for any $x \in\,E\setminus \{0\}$, we set
\[\mathcal{M}_x=\le\{\,\d_{x,\xi} \in\,E^\star\,;\,\xi \in\,M_x\,\r\},\]
where
\[\langle \d_{x,\xi},y\rangle_E=\frac{x(\xi)y(\xi)}{|x|_E},\ \ \ \ \ y \in\,E,\]
and, for $x=0$, we set
\[\mathcal{M}_0=\le\{\,h \in\,E^\star\,:\ |h|_{E^\star}=1\,\r\}.\]
Clearly, we have
\[\mathcal{M}_x\subseteq \partial |x|_E:=\le\{\,h
\in\,E^\star\,;\,|h|_{E^\star}=1,\
\le<h,x\r>_E=|x|_E\,\r\},\]
for every $x \in\,E$, and, due to the characterization $\partial |x|_E$, it is possible to show that if $\# M_x=1$, then $\mathcal{M}_x= \partial |x|_E$. In particular,    
if $u:[0,T]\to E$ is any differentiable mapping, then
\begin{equation}
\label{dersub}
\frac d{dt}^- |u(t)|_E\leq \le<u^\prime(t),\d\r>_E,
\end{equation}
for any $t \in\,[0,T]$ and $\d \in\,\mathcal{M}_{u(t)}.$

Analogously, if $x \in\,E\times E$, we set 
\[M_x=\le\{\,\xi=(\xi_1,\xi_2) \in\,\bar{D}\times \bar{D}\,:\,|x_1(\xi_1)|=|x_1|_E,\ |x_2(\xi_2)|=|x_2|_E\,\r\}.\]
Moreover, for $x \in\,E\times E\setminus\{0\}$, we set
\[\mathcal{M}_x=\le\{\,\d_{x,\xi} \in\,(E\times E)^\star\,;\,\xi \in\,M_x\,\r\},\]
where
\[\langle \d_{x,\xi},y\rangle_{E\times E}=\frac{x_1(\xi_1)y_1(\xi_1)+x_2(\xi_2)y_2(\xi_2)}{|x|_{E\times E}},\]
and, for $x=0$, we set
\[\mathcal{M}_0=\le\{\,h \in\,(E\times E)^\star\,:\ |h|_{(E\times E)^\star}=1\,\r\}.\]
As above, we have 
\[\mathcal{M}_x\subseteq \partial\,|x|_{E\times E}:=\le\{\,h
\in\,(E\times E)^\star\,;\,|h|_{(E\times E)^\star}=1,\
\le<h,x\r>_{E\times E}=|x|_E\,\r\},\]
and \eqref{dersub} holds true, with $E$ replaced by $E\times E$.

\medskip

Now, let $X$ be any Banach space. We shall denote by $B_b(X)$ the space of bounded Borel
functions $\varphi:X\to \reals$. $B_b(X)$ is a Banach space, endowed with the sup-norm
\[\|\varphi\|_\infty:=\sup_{x \in\,X}|\varphi(x)|.\]
$UC_b(X)$ will be  the subspace of uniformly continuous mappings.
Moreover, we shall denote by $\mathcal{L}(X)$ the space of bounded
linear operators on $X$ and, in the case $X$ is a Hilbert space, we shall denote by $\mathcal{L}_2(X)$  the subspace of
Hilbert-Schmidt operators, endowed with the norm
\[\|Q\|_{\mathcal{L}_2(X)}=\sqrt{\text{Tr}\,[Q^\star Q]}.\]

\bigskip

The stochastic perturbations in the slow and in the fast
 motion
equations \eqref{eq0} are given respectively by the Gaussian
noises $\partial w^{Q_1}/\partial t(t,\xi)$ and $\partial
w^{Q_2}/\partial t(t,\xi)$, for $t\geq 0$ and $\xi \in\,D$, which
are assumed to be white in time and colored in space, in the case
of space dimension $d>1$. Formally, the cylindrical Wiener
processes $w^{Q_i}(t,\xi)$ are defined by
\begin{equation}
\label{wiener}
w^{Q_i}(t,\xi)=\sum_{k=1}^\infty Q_i e_{k}(\xi)\,\beta_{k}(t),\ \ \ \ i=1,2,\end{equation}
 where
 $\{e_{k}\}_{k \in\,\nat}$ is a
complete orthonormal basis in $H$, $\{\beta_{k}(t)\}_{k
\in\,\nat}$ is a sequence of mutually independent standard
Brownian motions defined on the same complete stochastic basis
$(\Omega,\mathcal{F}, \mathcal{F}_t, \mathbb{P})$ and $Q_i$ is a
bounded linear operator on $H$.

\subsection{The operators $\mathcal{A}_1$ and $\mathcal{A}_2(t)$}

The operators $\mathcal{A}_1$ and $\mathcal{A}_2(t)$, $t \in\,\reals$,  are second order
uniformly elliptic operators, having continuous coefficients on
$D$, and the boundary operators $\mathcal{N}_1$ and
$\mathcal{N}_2$ can be either the identity operator (Dirichlet
boundary condition) or a first order operator with $C^1$ coefficients satisfying a uniform
nontangentiality condition.

In what follows, we shall assume that the operator $\mathcal{A}_2(t)$  has the following form 
\begin{equation}
\mathcal{A}_2(t)=\gamma(t)  \mathcal{A}_2+\mathcal{L}(t),\ \ \ \ \ \ t \in\,\reals,\end{equation}
where $\mathcal{A}_2$ is a second order
uniformly elliptic operator, having continuous coefficients on
$D$, independent of $t$,  and ${\cal L}(t)$ is a first order differential operator of the following form
\[{\cal L}(t,\xi)u(\xi)=\le<l(t,\xi),\nabla u(\xi)\r>_{\reals^d},\ \ \ \ t \in\,\mathbb{R},\ \ \xi \in\,D.\]

\begin{Hypothesis}
\label{H0}
\begin{enumerate}
\item The function $\gamma:\reals\to\reals$ is continuous and there exist $\gamma_{0},\gamma_{1}>0$ such that
\begin{equation}
\label{gamma}
\gamma_{0}\leq \gamma(t)\leq \gamma_{1},\ \ \ \ t\in\,\reals.\end{equation}
\item The function $l:\reals\times D\to\reals^d$ is continuous and  bounded.
\end{enumerate}

\end{Hypothesis}

The realizations  $A_i$, with $i=1,2$,  of the differential
operators $\mathcal{A}_i$ in the spaces $L^p(D)$ and $C(\overline{D})$, endowed with the domains
\[D(A_i^{(p)})=\le\{ f \in\,W^{2,p}(D)\ :\ \mathcal{N}_i f=0\ \text{at}\ \partial D\r\},\ \ \ \ i=1,2,\]
and
\[D(A_i)=\le\{ f \in\,\bigcap_{q>1} W^{2,q}(D)\ :\ \mathcal{A}_i f \in\,C(\overline{D}),\ \mathcal{N}_i f=0\ \text{at}\ \partial D\r\},\ \ \ \ i=1,2,\]
generate analytic semigroups in $L^p(D)$, $1<p<\infty$, and in $E$, respectively. Since $A_i^{(p)}$ is an extension of $A_i$ and $e^{t A_i^{(p)}}$ is an extension of $e^{t A_i}$, we shall drop the indices and write $A_i$ and $e^{t A_i}$ even working in $X=L^p(D)$.

\medskip

As
in \cite{av2} and \cite{pol}, we assume that the operators $A_1$, $A_2$ and
$Q_1$, $Q_2$  satisfy the following conditions.

\begin{Hypothesis}
\label{H1}

For $i=1, 2$ there exist a complete
orthonormal system $\{e_{i,k}\}_{k \in\,\nat}$ of $H$, which is
contained in $C^1(\overline{D})$, and two sequences of non-negative real
numbers $\{\a_{i,k}\}_{k \in\,\nat}$ and $\{\la_{i,k}\}_{k
\in\,\nat}$  such that
\[A_i\, e_{i,k}=-\a_{i,k}\, e_{i,k},\ \ \ \ \ Q_i e_{i,k}=\la_{i,k} e_{i,k},\ \ \ k \geq 1,\]
and
\[ \kappa_i:=\sum_{k=1}^\infty
\la_{i,k}^{\rho_i}\,|e_{i,k}|_\infty^2<\infty,\ \ \ \ \
\zeta_i:=\sum_{k=1}^\infty
\a_{i,k}^{-\beta_i}\,|e_{i,k}|_\infty^2<\infty,
\]
for some  constants $\rho_i \in\,(2,+\infty]$ and $\beta_i
\in\,(0,+\infty)$   such that
\begin{equation}
\label{new50}
\,\frac{\beta_i(\rho_i-2)}{\rho_i}<1.
\end{equation}

\end{Hypothesis}

For comments and examples concerning these assumptions on the
operators $A_i$ and $Q_i$ and the eigenfunction $e_{i,k}$, we refer to \cite[Remark 2.1]{av2} and \cite{grieser}.

For any $t>0$, $\d \in\,[0,2]$ and $p>1 1$ the semigroups $e^{t A_i}$ map $L^p(D)$ into $W^{\d,p}(D)$ with
\begin{equation}
\label{new8}
|e^{t A_i}x|_{\d,p}\leq c_i\,(t\wedge 1)^{-\frac \d 2}|x|_p,\ \ \ \ x \in\,L^p(D).
\end{equation}
By  the Sobolev Embedding Theorem, this implies that the semigroups $e^{tA_i}$ map $L^p(D)$ into $L^q(D)$, for any $1< p\leq q$, and
\begin{equation}
\label{new9}
|e^{t A_i}x|_{q}\leq c_i\,(t\wedge 1)^{-\frac{d(q-p)}{2pq} }|x|_p,\ \ \ \ x \in\,L^p(D).
\end{equation}
Moreover, $e^{tA_i}$ maps $C(\overline{D})$  into $C^\theta(\bar{D})$, for any $\theta(0,2)$, with
\begin{equation}
\label{new10}
|e^{t A_i}x|_{C^\theta(\bar{D})}\leq c_i\,(t\wedge 1)^{-\frac \theta 2}|x|_E.
\end{equation}

\medskip

Now,  we define
\[\gamma(t,s):=\int_s^t\gamma(r)\,dr,\ \ \ \ s<t,\]
and for any $\e>0$ and $\la\geq 0$  we set
\begin{equation}
\label{fin1001}
U_{\la,\e}(t,s)=e^{\frac 1\e\gamma(r,\rho)A_2-\frac \la\e(t-s)},\ \ \ \ s<t.
\end{equation}
In the case $\e=1$, we write $U_{\la}(t,s)$ and in the case $\e=1$ and $\la=0$ we  write $U(t,s)$.
Next,    for any $\e>0$, $\la\geq 0$ and for any $u \in\,C([s,t];W^{1,p}_0(D))$ and $r \in\,[s,t]$, we define
 \begin{equation}
\label{psilambda}
\psi_{\la,\e}(u;s)(r)=\frac 1\e\int_s^r U_{\la,\e}(r,\rho)L(\rho) u(\rho)\,d\rho,\ \ \ \ s<r<t.
\end{equation}
Moreover, for $\e=1$ we simply write $\psi_{\la}(u;s)(r)$.

\begin{Lemma}
For any $s<t$ the operator $e^{\gamma(t,s)A_2}L(s)$ can be extended as a linear operator both in $L^p(D)$, with $1< p<\infty$, and in $E$. Moreover, for any $\eta>0$, its extension (still denoted by $e^{\gamma(t,s)A_2}$) satisfies
\begin{equation}
\label{stimaL2}
\|e^{\gamma(t,s)A_2}L(s)\|_{\mathcal{L}(E)}\leq c_\eta\,((t-s)\wedge 1)^{-(\frac 12+\eta)}.
\end{equation}
\end{Lemma}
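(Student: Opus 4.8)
The plan is to reduce the statement, by density, to a smooth function $u$ for which $L(s)u$ has a classical meaning, and then to absorb the first-order derivative carried by $L(s)$ into the parabolic smoothing of $e^{\gamma(t,s)A_2}$. First I would get rid of $\gamma$: since $\gamma(t,s)=\int_s^t\gamma(\tau)\,d\tau$ and, by \eqref{gamma}, $\gamma_0(t-s)\leq\gamma(t,s)\leq\gamma_1(t-s)$, an elementary comparison produces a constant $c=c(\gamma_0,\gamma_1,\eta)$ with $(\gamma(t,s)\wedge 1)^{-(1/2+\eta)}\leq c\,((t-s)\wedge 1)^{-(1/2+\eta)}$, so that, writing $r:=\gamma(t,s)$, it suffices to prove that $e^{rA_2}L(s)$ extends boundedly to $L^p(D)$ and to $E$, with $\|e^{rA_2}L(s)\|_{\mathcal{L}(E)}\leq c_\eta\,(r\wedge 1)^{-(1/2+\eta)}$ uniformly in $s\in\reals$ and $r>0$. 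Then, for $u$ in the dense subspace $D(A_2)\subset E$ (so that $u\in C^1(\bar D)$ and $\mathcal{N}_2 u=0$), I would write $L(s)$ in divergence form,
\[
L(s)u=\mathrm{div}\big(l(s,\cdot)\,u\big)-\big(\mathrm{div}_\xi\,l(s,\cdot)\big)\,u ,
\]
interpreting the first term in $W^{-1,p}(D)=(W^{1,p'}_0(D))^\star$ through $\langle\mathrm{div}(l(s,\cdot)u),\phi\rangle=-\int_D \langle l(s,\xi),\nabla\phi(\xi)\rangle\,u(\xi)\,d\xi$, so that $\|\mathrm{div}(l(s,\cdot)u)\|_{W^{-1,p}(D)}\leq|l|_\infty\,|D|^{1/p}\,|u|_E$, only the boundedness of $l$ being used.

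For the smoothing step I would first note that $e^{rA_2}$ maps $W^{-1,p}(D)$ into $L^p(D)$ with $\|e^{rA_2}\|_{\mathcal{L}(W^{-1,p}(D),L^p(D))}\leq c\,(r\wedge 1)^{-1/2}$: this is the dual of \eqref{new8} (taken with $\delta=1$ and exponent $p'$) for the semigroup $e^{rA_2^\star}$, which is generated by an operator of the same class as $A_2$ (elliptic, continuous coefficients, adjoint boundary condition). Combined with the divergence-form identity this already gives the bounded extension of $e^{rA_2}L(s)$ to $L^p(D)$, the lower-order term $(\mathrm{div}_\xi\,l(s,\cdot))u$ being dealt with trivially once $\mathrm{div}_\xi\,l(s,\cdot)$ is under control --- a mild regularity of the drift beyond mere continuity, e.g.\ Lipschitz continuity in $\xi$ uniformly in $t$. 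For the $E$-estimate I would factor $e^{rA_2}=e^{(r/2)A_2}\circ e^{(r/2)A_2}$: the first factor sends $W^{-1,p}(D)$ into $L^p(D)$ with norm $\leq c\,(r\wedge 1)^{-1/2}$ as above, while the second, by \eqref{new8} and the Sobolev embedding $W^{\delta,p}(D)\hookrightarrow C(\bar D)$ (valid as soon as $\delta p>d$), sends $L^p(D)$ into $E$ with norm $\leq c\,(r\wedge 1)^{-\delta/2}$. Choosing $p>d/(2\eta)$ and $\delta\in(d/p,2\eta)$, the two exponents add up to $\tfrac12+\tfrac{\delta}{2}<\tfrac12+\eta$, which gives $\|e^{rA_2}L(s)\|_{\mathcal{L}(E)}\leq c_\eta\,(r\wedge 1)^{-(1/2+\eta)}$, and hence the lemma.

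The step I expect to be the real obstacle is precisely the extension: a priori $e^{\gamma(t,s)A_2}L(s)$ is defined only on $W^{1,p}_0(D)$, resp.\ on $C^1$-functions with $\mathcal{N}_2u=0$, one cannot bound $|L(s)u|$ by $|u|$, and the whole point is to move the missing derivative onto the smoothing operator. This is what forces the rewriting of $L(s)$ in divergence form, and it is there that both the regularity of the drift $l$ (through $\mathrm{div}_\xi\,l$) and the boundary conditions carried by $\mathcal{A}_2$ come into play: the boundary contribution produced when one integrates by parts vanishes in the Dirichlet case and is absorbed, in the oblique case, via the uniform nontangentiality of $\mathcal{N}_2$ together with $\int_{\partial D}|G_r(\xi,\zeta)|\,d\sigma(\zeta)\lesssim r^{-1/2}$ for the fundamental solution $G_r$ of $\partial_t-\mathcal{A}_2$. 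The unavoidable loss of $\eta$ is the price of the strict Sobolev embedding $W^{\delta,p}(D)\hookrightarrow C(\bar D)$ in the last factorisation; a more hands-on alternative --- representing $e^{rA_2}$ through $G_r(\xi,\zeta)$, integrating by parts in $\zeta$, and using $\int_D|\nabla_\zeta G_r(\xi,\zeta)|\,d\zeta\lesssim r^{-1/2}$ together with $|l(s,\zeta)-l(s,\xi)|\lesssim|\zeta-\xi|$ --- reaches the same conclusion, in fact with $\eta=0$.
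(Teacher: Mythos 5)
Your proposal is correct and takes essentially the same route as the paper: both arguments move the derivative carried by $L(s)$ onto the parabolic smoothing by duality/integration by parts and invoke \eqref{new8} with $\d=1$ (together with \eqref{gamma}) to get the $\mathcal{L}(L^p(D))$ bound with singularity $((t-s)\wedge 1)^{-1/2}$, and then split the semigroup and use $L^p(D)\to W^{\d,p}(D)\hookrightarrow C(\bar{D})$ with $p>d/\d$ and $\d$ small to reach the exponent $\tfrac 12+\eta$ in $E$. The only cosmetic difference is that you phrase the first step as $\mathrm{div}(l(s,\cdot)u)\in W^{-1,p}(D)$ acted on by the adjoint semigroup, while the paper pairs directly against $\varphi\in L^{p'}(D)$; note that the extra regularity of $l$ you flag (to control $\mathrm{div}_\xi l$) is implicitly needed in the paper's computation as well, since its integration by parts produces the term $D_i\le(l_i(s,\cdot)e^{\gamma(t,s)A_2}\varphi\r)$.
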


\begin{proof}
Let $f \in\,W^{1,p}_0(D)$. For any $0<s<t$ and $\varphi \in\,L^{p^\prime}(D)$, as $e^{\gamma(t,s)A_2}$ is self-adjoint we have
\[\int_D \le(e^{\gamma(t,s)A_2}L(s)f\r)(x)\varphi(x)\,dx=\int_DL(s)f(x)\,e^{\gamma(t,s)A_2}\varphi(x)\,dx.\]
Therefore, if we integrate by parts, due to \eqref{new8} (with $\d=1$) and \eqref{gamma}, we get
\[\begin{array}{l}
\ds{\le|\int_D \le(e^{\gamma(t,s)A_2}L(s)f\r)(x)\varphi(x)\,dx\r|=\le|\int_D f(x)\,D_i\le(l_i(s,\cdot) e^{\gamma(t,s)A_2}\varphi\r)(x)\,dx\r|}\\
\vs
\ds{\leq c\,\le((t-s)\wedge 1\r)^{-\frac 12}|f|_{L^p(D)}\,|\varphi|_{L^{p^\prime}(D)}.}
\end{array}\]
Due to the arbitrariness of $\varphi \in\,L^{p^\prime}(D)$, this yields
\[\le|e^{\gamma(t,s)A_2}L(s)f\r|_{L^p(D)}\leq c\,\le((t-s)\wedge 1\r)^{-\frac 12}|f|_{L^p(D)}.\]
Due to the density of $W^{1,p}_0(D)$ in $L^p(D)$, the operator$e^{\gamma(t,s)A_2}L(s)$ has a bounded linear extension to  $L^p(D)$ (still denoted by $e^{\gamma(t,s)A_2}$) that satisfies  
\begin{equation}
\label{via}
\|e^{\gamma(t,s)A_2}L(s)\|_{\mathcal{L}(L^p(D))}\leq c\,\le((t-s)\wedge 1\r)^{-\frac 12}.
\end{equation}

Now, we fix $\d \in\,(0,1)$ and $p>d/\d$, so that $W^{\d,p}(D)$ is continuously embedded  in $C(\bar{D})$. For any $0<s<t$, we write
\[e^{\gamma(t,s)A_2}L(s)=e^{\gamma(t,(t-s)/2)A_2}e^{\gamma((t-s)/2,s)A_2}L(s).\]
The operator $e^{\gamma(t,(t-s)/2)A_2}$ maps $L^p(D)$ into $W^{\d,p}(D)$, with
\[\|e^{\gamma(t,(t-s)/2)A_2}\|_{\mathcal{L}(L^p(D),W^{\d,p}(D))}\leq c((t-s)\wedge 1)^{-\frac \d 2}.\]
Using the semigroup law and \eqref{via}, we obtain that  $e^{\gamma(t,s)A_2}L(s)$ maps $L^p(D)$ into $W^{\d,p}(D)$ with
\[\|e^{\gamma(t,s)A_2}L(s)\|_{\mathcal{L}(L^p(D),W^{\d,p}(D))}\leq c\,\le((t-s)\wedge 1\r)^{-\frac {1+\d}2}.\]
Now, as $C(\bar{D})$ is continuously embedded  continuously in any $L^p(D)$ and $W^{\d,p}(D)$ is continuously embedded  in $C(\bar{D})$, for $p>d/\d$, we can conclude.
\end{proof}

As a consequence of \eqref{stimaL2}, if we  proceed as in \cite[pages 176-177]{tesi}, we can show that $\psi_{\la,\e}(\cdot;s)$ is a bounded linear operator in $C([s,t];E)$ and there exists a continuous increasing function $c_\la$, with $c_\la(0)=0$, such that for any $s<t$
\begin{equation}
\label{psilambdaunif}
|\psi_{\la,\e}(u;s)|_{C([s,t];E)}\leq c_\la((t-s)/\e)|u|_{C([s,t];E)}.
\end{equation}
Moreover, if $\la>0$ then $c_\la  \in\,L^\infty([0,+\infty))$ and 
\begin{equation}
\label{c-lambda}
\lim_{\la\to\infty} |c_\la|_{\infty}=0.
\end{equation}

\begin{Lemma}
\label{lemma2.1}
For every $\eta \in\,(0,1)$ and $p\geq 1$, there exists $\bar{k}\geq 1$ such that for every
$k\geq \bar{k}$, $s<t$, $0<\d<\la$ and $u \in\,C([s,t];E)$
\begin{equation}
\label{stimapsi-sob}
e^{\d kr}|\psi_{\la}(u;s)(r)|_{\eta,p}^k\leq c_{k}(\la-\d)\int_s^r e^{-(\la-\d)(r-\rho)}e^{\d k\rho}\,|u(\rho)|_E^k\,d\rho,\ \ \ s<r<t,
\end{equation}
for some continuous decreasing function $c_{k}$ such that
\[\lim_{\gamma\to \infty}c_{k}(\gamma)=0.\]
\end{Lemma}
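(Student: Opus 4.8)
The plan is to first establish a pointwise smoothing bound for the kernel $U_\la(r,\rho)L(\rho)$, then insert it into the integral defining $\psi_\la(u;s)(r)$, and finally raise the resulting scalar inequality to the $k$-th power by a H\"older splitting; the (integrable but nonzero) singularity of the kernel at $\rho=r$ is precisely what will force $k$ to be large.

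\emph{Smoothing estimate.} First I would show that there is $c>0$ such that, for all $s<\rho<r$ and $w\in E$,
\[
\left|U_\la(r,\rho)L(\rho)w\right|_{\eta,p}\leq c\,e^{-\la(r-\rho)}\left((r-\rho)\wedge 1\right)^{-\frac{1+\eta}{2}}|w|_E .
\]
Writing $m:=(r+\rho)/2$ and using the semigroup law, $U_\la(r,\rho)L(\rho)=e^{-\la(r-\rho)}\,e^{\gamma(r,m)A_2}\circ\left(e^{\gamma(m,\rho)A_2}L(\rho)\right)$. Since $C(\bar D)$ embeds continuously into $L^p(D)$, estimate \eqref{via} gives $\left|e^{\gamma(m,\rho)A_2}L(\rho)w\right|_{p}\leq c\left((r-\rho)\wedge 1\right)^{-1/2}|w|_E$; on the other hand \eqref{new8} applied with $\d=\eta$, together with \eqref{gamma} (which yields $\gamma(r,m)\wedge 1\geq c\,((r-m)\wedge 1)$ and $r-m=(r-\rho)/2$), gives $\left|e^{\gamma(r,m)A_2}v\right|_{\eta,p}\leq c\left((r-\rho)\wedge 1\right)^{-\eta/2}|v|_{p}$. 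Composing these two bounds proves the claim; the exponent $(1+\eta)/2$ is $<1$ because $\eta<1$.

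\emph{Insertion and H\"older.} Plugging this into \eqref{psilambda} with $\e=1$ gives
\[
|\psi_\la(u;s)(r)|_{\eta,p}\leq c\int_s^r e^{-\la(r-\rho)}\left((r-\rho)\wedge 1\right)^{-\frac{1+\eta}{2}}|u(\rho)|_E\,d\rho .
\]
I would multiply by $e^{\d r}$, use $e^{-\la(r-\rho)}e^{\d r}=e^{-(\la-\d)(r-\rho)}e^{\d\rho}$, and write the integrand as $c\,\phi(\rho)^{1/k}\,\chi(\rho)$ with
\[
\phi(\rho):=e^{-(\la-\d)(r-\rho)}e^{\d k\rho}|u(\rho)|_E^k,\qquad \chi(\rho):=e^{-(\la-\d)(r-\rho)\left(1-\frac1k\right)}\left((r-\rho)\wedge 1\right)^{-\frac{1+\eta}{2}}.
\]
H\"older's inequality with exponents $k$ and $k/(k-1)$, followed by raising to the power $k$, then yields
\[
e^{\d kr}|\psi_\la(u;s)(r)|_{\eta,p}^{k}\leq c^{k}\left(\int_s^r \chi(\rho)^{\frac{k}{k-1}}\,d\rho\right)^{k-1}\int_s^r e^{-(\la-\d)(r-\rho)}e^{\d k\rho}|u(\rho)|_E^{k}\,d\rho .
\]
Since $\chi(\rho)^{k/(k-1)}=e^{-(\la-\d)(r-\rho)}\left((r-\rho)\wedge 1\right)^{-a_k}$ with $a_k:=\frac{(1+\eta)k}{2(k-1)}$, the substitution $\tau=r-\rho$ gives $\int_s^r \chi^{k/(k-1)}\,d\rho\leq I_k(\la-\d)$, where $I_k(\gamma):=\int_0^{+\infty}e^{-\gamma\tau}(\tau\wedge 1)^{-a_k}\,d\tau$.

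\emph{Threshold and conclusion.} The integral $I_k(\gamma)$ is finite exactly when $a_k<1$, i.e. when $k>2/(1-\eta)$; this is where the threshold enters, and one takes any $\bar k\geq 1$ with $\bar k>2/(1-\eta)$. Setting $c_k(\gamma):=c^{k}\,I_k(\gamma)^{k-1}$ gives the asserted inequality, and it remains to check the properties of $c_k$: the map $\gamma\mapsto I_k(\gamma)$ is continuous and decreasing by monotone/dominated convergence (the integrand being dominated near $\tau=0$ by the integrable $\tau^{-a_k}$), and $I_k(\gamma)\to 0$ as $\gamma\to\infty$, since $\int_0^1 e^{-\gamma\tau}\tau^{-a_k}\,d\tau=\gamma^{a_k-1}\int_0^{\gamma}e^{-\si}\si^{-a_k}\,d\si\leq\gamma^{a_k-1}\Gamma(1-a_k)\to 0$ (as $a_k<1$) and $\int_1^{+\infty}e^{-\gamma\tau}\,d\tau=e^{-\gamma}/\gamma\to 0$; hence $c_k$ is decreasing with $\lim_{\gamma\to\infty}c_k(\gamma)=0$. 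The step I expect to be most delicate is the bookkeeping in the H\"older split: it must be arranged so that the \emph{entire} singular factor $((r-\rho)\wedge 1)^{-(1+\eta)/2}$ is pushed into the factor raised to the power $k-1$, leaving a pure exponential kernel multiplying $|u(\rho)|_E^{k}$ — and it is exactly the convergence of the remaining time integral $I_k(\la-\d)$ that dictates the restriction $k\geq\bar k$.
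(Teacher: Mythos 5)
Your proposal is correct and follows essentially the same route as the paper: the kernel smoothing bound $|U_\la(r,\rho)L(\rho)w|_{\eta,p}\leq c\,e^{-\la(r-\rho)}((r-\rho)\wedge 1)^{-\frac{1+\eta}{2}}|w|_E$ (which you re-derive by a midpoint splitting, while the paper simply invokes \eqref{new8} together with \eqref{stimaL2}, i.e.\ the preceding lemma), followed by the same exponential reweighting, the same H\"older split with exponents $k$ and $k/(k-1)$ pushing the singularity into the $(k-1)$-power factor, the same threshold $\frac{(1+\eta)k}{2(k-1)}<1$, and the same definition of $c_k(\gamma)$. Your explicit verification that $c_k$ is continuous, decreasing and tends to $0$ as $\gamma\to\infty$ is a welcome detail the paper leaves implicit.
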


\begin{proof}
Due to \eqref{new8} and \eqref{stimaL2}, for any $\eta \in\,(0,1)$ and $p\geq 1$,we have
\[\begin{array}{l}
\ds{|\psi_{\la}(u;s)(r)|_{\eta,p}\leq c\int_s^r e^{-\la(r-\rho)}((r-\rho)\wedge 1)^{-\frac {1+\eta}2}|u(\rho)|_{L^p(D)}\,d\rho}\\
\vs
\ds{\leq c\,e^{-\delta r}\int_s^r e^{-(\la-\d)(r-\rho)}((r-\rho)\wedge 1)^{-\frac{1+\e}2}\,e^{\d \rho}\,|u(\rho)|_E\,d\rho.}
\end{array}\]
Therefore, if we take $\bar{k}$ such that $\bar{k}(1+\eta)/2(\bar{k}-1)<1$, for any $k\geq \bar{k}$ we have
\[\begin{array}{l}
\ds{e^{\d kr}|\psi_{\la}(u;s)(r)|_{\eta,p}^k\leq c_k\le(\int_0^{r-s} e^{-(\la-\d)\rho}(\rho\wedge 1)^{-\frac{(1+\eta) k}{2(k-1)}}\,d\rho\r)^{k-1}\int_s^tre^{-(\la-\d)(r-\rho)}e^{\d k\rho}\,|u(\rho)|^k_E\,d\rho.}
\end{array}\]
This implies \eqref{stimapsi-sob}, if we set
\[c_{k}(\gamma)=c_k \le(\int_0^{+\infty} e^{-(\la-\d)\rho}(\rho\wedge 1)^{-\frac{(1+\eta) k}{2(k-1)}}\,d\rho\r)^{k-1}.\]

\end{proof}

Due to the Sobolev embedding theorem, if we pick $\bar{p}$ large enough such that $\eta \bar{p}>d$, we have that for any $k\geq \bar{k}$
\begin{equation}
\label{stimapsi-cteta}
e^{\d kr}|\psi_{\la}(u;s)(r)|_{C^\theta(\bar{D})}^k\leq c_{k}(\la-\d)\int_s^r e^{-(\la-\d)(r-\rho)}e^{\d k \rho}\,|u(\rho)|_E^k\,d\rho,\ \ \ s<r<t,
\end{equation}
where $\theta=\eta-d/\bar{p}$. In particular, for any $k\geq \bar{k}$
 \begin{equation}
\label{stimapsi}
e^{\d k r}|\psi_{\la}(u;s)(r)|_{E}^k\leq c_{k}(\la-\d)\int_s^r e^{-(\la-\d)(r-\rho)}e^{\d k \rho}\,|u(\rho)|_E^k\,d\rho,\ \ \ s<r<t.
\end{equation}

\begin{Lemma}
\label{lemmafinl}
For any $u \in\,L^k(s,t;E)$, with $k\geq 1$, and for any $\e>0$ and $\la\geq 0$, it holds
\[|\psi_{\la,\e}(u;s)|_{L^k(s,t;E)}\leq c_{\la, k}((t-s)/\e)|u|_{L^k(s,t;E)}.\]
Moreover, if $\la>0$, then $c_{\la,k} \in\,L^\infty(0,\infty)$ and
\[\lim_{\la\to\infty}|c_{\la,k}|_\infty=0.\]
\end{Lemma}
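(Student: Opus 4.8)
The plan is to reduce the statement to a scalar convolution estimate on the line. Writing out the definition \eqref{psilambda},
\[\psi_{\la,\e}(u;s)(r)=\frac 1\e\int_s^r U_{\la,\e}(r,\rho)L(\rho) u(\rho)\,d\rho=\frac 1\e\int_s^r e^{-\frac\la\e(r-\rho)}\,e^{\frac 1\e\gamma(r,\rho)A_2}L(\rho)\, u(\rho)\,d\rho,\]
where, for $u\in L^k(s,t;E)$, the operator $e^{\frac 1\e\gamma(r,\rho)A_2}L(\rho)$ is understood as its bounded extension to $E$ provided by the previous Lemma. Fixing once and for all $\eta\in(0,1/2)$, I would first apply estimate \eqref{stimaL2} --- whose proof uses only the uniform boundedness of $l$ from Hypothesis \ref{H0}, hence is uniform in the time argument of $L$ --- to get
\[\le\|e^{\frac 1\e\gamma(r,\rho)A_2}L(\rho)\r\|_{\mathcal{L}(E)}\leq c_\eta\le(\tfrac 1\e\gamma(r,\rho)\wedge 1\r)^{-(\frac 12+\eta)}.\]
Since $\gamma_0(r-\rho)\leq\gamma(r,\rho)\leq\gamma_1(r-\rho)$ by \eqref{gamma}, this is bounded by $c_\eta'\big((r-\rho)/\e\wedge 1\big)^{-(\frac 12+\eta)}$ with $c_\eta'$ depending only on $\eta$ and $\gamma_0$, and taking the $E$-norm inside the integral yields the pointwise bound
\[|\psi_{\la,\e}(u;s)(r)|_E\leq c_\eta'\int_s^r \frac 1\e\,e^{-\frac\la\e(r-\rho)}\le(\tfrac{r-\rho}\e\wedge 1\r)^{-(\frac 12+\eta)}|u(\rho)|_E\,d\rho,\qquad s<r<t.\]

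Next I would perform the change of variable $\sigma=(r-\rho)/\e$, which turns the bound into
\[|\psi_{\la,\e}(u;s)(r)|_E\leq c_\eta'\int_0^{(r-s)/\e} g_\la(\sigma)\,|u(r-\e\sigma)|_E\,d\sigma,\qquad g_\la(\sigma):=e^{-\la\sigma}(\sigma\wedge 1)^{-(\frac 12+\eta)}.\]
Because $\eta<1/2$, the function $g_\la$ is integrable near the origin, so $g_\la\,\mathbf 1_{[0,(t-s)/\e]}\in L^1(0,\infty)$ for every $\la\geq 0$. Extending $\rho\mapsto|u(\rho)|_E$ by zero outside $[s,t]$ and reading the last inequality as a truncated convolution, Young's inequality ($L^1\ast L^k\to L^k$, with constant $1$) gives
\[|\psi_{\la,\e}(u;s)|_{L^k(s,t;E)}\leq c_\eta'\le(\int_0^{(t-s)/\e} g_\la(\sigma)\,d\sigma\r)|u|_{L^k(s,t;E)},\]
so the claim holds with $c_{\la,k}(\tau):=c_\eta'\int_0^{\tau} g_\la(\sigma)\,d\sigma$, a finite, continuous, nondecreasing function of $\tau$ (the index $k$ being kept only for consistency with the statement).

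Finally, for $\la>0$ one has $c_{\la,k}(\tau)\leq c_\eta'\int_0^{\infty}e^{-\la\sigma}(\sigma\wedge 1)^{-(\frac 12+\eta)}\,d\sigma<\infty$ uniformly in $\tau$, whence $c_{\la,k}\in L^\infty(0,\infty)$; and for $\la\geq 1$ the integrand is dominated by the integrable function $e^{-\sigma}(\sigma\wedge 1)^{-(\frac 12+\eta)}$ and tends to $0$ pointwise as $\la\to\infty$, so dominated convergence yields $|c_{\la,k}|_\infty\to 0$. The argument is essentially routine once \eqref{stimaL2} is available, so I do not expect a genuine obstacle; the only points asking for a bit of care are the uniformity of \eqref{stimaL2} in the time variable of $L(\rho)$ (supplied by the uniform bound on $l$ in Hypothesis \ref{H0}) and the restriction $\eta<1/2$, which keeps the convolution kernel $g_\la$ integrable at $0$ and hence makes the bound work also in the borderline case $\la=0$. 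Alternatively one could reproduce the iterated-kernel computation of Lemma \ref{lemma2.1} to obtain the same estimate, but the convolution route is shorter.
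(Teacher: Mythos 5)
Your proof is correct and follows essentially the same route as the paper's: a pointwise bound of the integrand by a scalar kernel, the rescaling $(r-\rho)/\epsilon$, and Young's convolution inequality, ending with the same constant $c_{\lambda,k}((t-s)/\epsilon)$ given by the truncated integral of $e^{-\lambda\sigma}(\sigma\wedge 1)^{-a}$ with $a<1$. The only difference is minor: the paper, mimicking Lemma \ref{lemma2.1}, first bounds $|\psi_{\lambda,\epsilon}(u;s)(r)|_{\eta,p}$ and then passes to the $E$-norm through the Sobolev embedding $W^{\eta,\bar p}(D)\hookrightarrow C(\bar D)$ with $\eta\bar p>d$, whereas you apply \eqref{stimaL2} directly in $\mathcal{L}(E)$ (with $\eta<1/2$ so the kernel stays integrable at the origin), which is a legitimate shortcut.
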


\begin{proof}
As in the proof of Lemma \ref{lemma2.1}, for any $\eta \in\,(0,1)$ and $p\geq 1$ we have
\[|\psi_{\la,\e}(u;s)(r)|_{\eta,p}\leq \frac c\e \int_s^r e^{-\frac \la\e(r-\rho)}((r-\rho)/\e\wedge 1)^{-\frac{1+\eta}2}|u(\rho)|_{E}\,d\rho.\]
Therefore, if we pick $\bar{p}$ large enough so that $\eta\bar{p}>d$, for any $k \geq 1$ we have, by the Young inequality,

\[\begin{array}{l}
\ds{\int_s^t|\psi_{\la,\e}(u;s)(r)|_E^k\,dr\leq \frac{c_k}{\e^k}\int_s^t\le(\int_s^re^{-\frac \la\e(r-\rho)}((r-\rho)/\e\wedge 1)^{-\frac{1+\eta}2}|u(\rho)|_{E}\,d\rho\r)^k\,dr}\\
\vs
\ds{\leq \frac{c_k}{\e^k} \int_s^t|u(r)|_E^k\,dr\le(\int_0^{t-s} e^{-\frac \la\e r}(r/\e\wedge 1)^{-\frac{1+\eta}2}\,dr\r)^k.}
\end{array}\]
Since
\[\frac 1{\e^k}\le(\int_0^{t-s} e^{-\frac \la\e r}(r/\e\wedge 1)^{-\frac{1+\eta}2}\,dr\r)^k=\le(\int_0^{(t-s)/\e} e^{-\la r}(r\wedge 1)^{-\frac{1+\eta}2}\,dr\r)^k,\]
we conclude by taking
\[c_{\la,k}(\gamma):=\le(\int_0^{\gamma} e^{-\la r}(r\wedge 1)^{-\frac{1+\eta}2}\,dr\r)^k.\]

\end{proof}

\subsection{The coefficients $b_i$ and $g_i$}

As far as the reaction coefficient $b_1:\bar{D}\times \reals^2\to \reals$ 
in the slow equation is concerned, we assume the following condition, that are the same conditions of the paper \cite{pol}.

\begin{Hypothesis}
\label{H2}
\begin{enumerate}
\item The mapping $b_1:\bar{D}\times \reals^2\to \reals$ is continuous and there exists $m_1\geq 1$ such that 
\begin{equation}
\label{boundgb1}
\sup_{\xi \in\,\bar{D}}|b_1(\xi,\si)|\leq
c\,\le(1+|\si_1|^{m_1}+|\si_2|\r),\ \ \ \ \si=(\si_1,\si_2) \in\,\reals^2.
\end{equation}

\item There exists $\theta\geq 0$ such that 
\begin{equation}
 \label{nl1200}
\sup_{\xi \in\,\bar{D}}|b_1(\xi,\si)-b_1(\xi,\rho)|\leq c\,\le(1+|\si|^{\theta}+|\rho|^{\theta}\r)|\si-\rho|,\ \ \ \ \si, \rho \in\,\reals^2.
\end{equation}

\item There exists $c>0$ such that for any $\si, h \in\,\reals^2$ 
\begin{equation}
 \label{nl101}
\sup_{\xi \in\,\bar{D}}\le(b_1(\xi,\si+h)-b_1(\xi,\si)\r)h_1\leq c\,|h_1|\le(1+|\si|+|h|\r).
\end{equation}

\end{enumerate}
\end{Hypothesis}

\begin{Example}
\label{new90} From \cite{pol}.
{\em Let $h: \bar{D}\times \reals\to \reals$ be a continuous function such that $h(\xi,\cdot):\reals\to \reals $ is locally Lipschitz-continuous, uniformly with respect to $\xi \in\,\bar{D}$. Assume that 
\begin{equation}
\label{ll1}
\sup_{\xi \in\,\bar{D}}|h(\xi,s)|\leq c\,\le(1+|s|^m\r),\ \ \ \ \ s \in\,\reals,\end{equation}
and
\begin{equation}
\label{ll2}
h(\xi,s_1)-h(\xi,s_2)=\rho(\xi,s_1,s_2)(s_1-s_2),\ \ \ \ \ \xi \in\,\bar{D},\ \ \ s_1, s_2 \in\,\reals,
\end{equation}
for some $\rho:\bar{D}\times \reals^2\to \reals$ such that
\[\sup_{\substack{\xi \in\,\bar{D}\\s_1, s_2 \in\,\reals}}\rho(\xi,s_1,s_2)<\infty.\]
Moreover, let $k: \bar{D}\times \reals^2\to \reals$ be a continuous function, such that $k(\xi,\cdot):\reals^2\to \reals$ has linear growth and  is locally  Lipschitz-continuous, uniformly with respect to $\xi \in\,\bar{D}$. 

Now, we fix any continuous function $f:\bar{D}\times \reals\to \reals$ such that $f(\xi,\cdot)$ is of class $C^1$, for any $\xi \in\,\bar{D}$, and 
\begin{equation}
\label{cl30}
0\leq \frac{\partial f}{\partial s}(\xi,s)\leq c,\ \ \ \ (\xi,s) \in\, \bar{D}\times \reals,
\end{equation}
for some $c>0$. 
If we define
\[b_1(\xi,\si)=f(\xi,h(\xi,\si_1)+k(\xi,\si_1,\si_2)),\]
it is not difficult to check that conditions 1 and 3 in Hypothesis \ref{H2} are all satisfied. Moreover, if we assume that $h$ and $k$ are differentiable and their derivatives have polynomial growth, then condition 2 is also satisfied.

Next, let $\beta$ and $\beta_i$ be continuous functions from $\bar{D}$ into $\reals$, for $i=1,\ldots,2k$, and assume
\[\inf_{\xi \in\,\bar{D}}\beta(\xi)>0.\]
Then, it is possible to check that the function
\[h(\xi,s):=-\beta(\xi)s^{2k+1}+\sum_{i=1}^{2k}\beta_i(\xi)s^i,\]
satisfies conditions \eqref{ll1} and \eqref{ll2}.
\begin{flushright}
$\Box$

\end{flushright}
}
\end{Example}

For the reaction term $b_2:\reals\times \bar{D}\times \reals^2\to \reals$ in the fast equation, we assume the following conditions.

\begin{Hypothesis}
\label{H2bis}

\begin{enumerate}

\item The mapping $b_2:\reals\times \bar{D}\times \reals^2\to \reals$ is continuous and  there exists $m_2\geq 1$ such that 
\begin{equation}
\label{boundgb2}
\sup_{(t,\xi) \in\,\reals\times \bar{D}}|b_2(t,\xi,\si)|\leq
c\le(1+|\si_1|+|\si_2|^{m_2}\r),\ \ \ \ \si=(\si_1,\si_2) \in\,\reals^2.
\end{equation}

\item The mapping $b_2(t,\xi,\cdot):\reals^2\to\reals$ is locally Lipschitz continuous, uniformly with respect to $(t,\xi)  \in\,\reals\times \bar{D}$. 

\item There exists $c>0$ such that for any $\si, h \in\,\reals^2$ 
\begin{equation}
 \label{nl101bis}
\sup_{(t,\xi) \in\,\reals\times \bar{D}}\le(b_2(t,\xi,\si+h)-b_2(t,\xi,\si)\r)h_2\leq c\,|h_2|\le(1+|\si|+|h|\r).
\end{equation}

\item For every $(t,\xi) \in\,\reals\times \bar{D}$, we have
\begin{equation}
\label{cl41}
b_2(t,\xi,\si_1,\si_2)-b_2(t,\xi,\rho_1,\si_2)=\theta(t,\xi,\si_1,\rho_1,\si_2),\end{equation}
for some
 continuous function $\theta:\reals\times \bar{D}\times \reals^3\to \reals$ such that 
 \begin{equation}
 \label{cl42}
\inf_{\substack{(t,\xi) \in\,\reals\times \bar{D}\\(\si_1,\si_2) \in\,\reals^2,\ h>0}}\theta(t,\xi,\si_1,\si_1+h,\si_2)\sup_{\substack{(t,\xi) \in\,\reals\times \bar{D}\\(\si_1,\si_2) \in\,\reals^2,\ h>0}}\theta(t,\xi,\si_1,\si_1+h,\si_2)\geq 0,
\end{equation}
and such that for any $R>0$ there exists $L_R>0$ with
\begin{equation}
\label{new60}
\si_1, \rho_1 \in\,B_\reals(R)\Longrightarrow \sup_{\substack{(t,\xi) \in\,\reals\times \bar{D}\\\si_2 \in\,\reals}}|\theta(t,\xi,\si_1,\rho_1,\si_2)|\leq L_R\,|\si_1-\rho_1|.
\end{equation}

\item For any $\si_1, \si_2, \rho_2 \in\,\reals$, we have
\begin{equation}
\label{nl102}
b_2(t,\xi,\si_1,\si_2)-b_2(t,\xi,\si_1,\rho_2)=-\la(t,\xi,\si_1,\si_2,\rho_2)(\si_2-\rho_2),
\end{equation}
for some measurable function $\la:\reals\times \bar{D}\times \reals^3\to [0,+\infty)$.

\end{enumerate}
\end{Hypothesis}

\begin{Example}
{\em Let $h:\reals\times \bar{D}\times \reals\to \reals$ be such that $h(t,\cdot)$ satisfies the same conditions as  in Example \ref{new90}, uniformly with respect to $t \in\,\reals$. Assume that the function $\rho$ in \eqref{ll2} depends also on $t \in\,\reals$ and satisfies
\begin{equation}
\label{new91}
\sup_{\substack{(t,\xi) \in\,\reals\times \bar{D}\\s \in\,\reals}}\rho(t,\xi,s)\leq 0.
\end{equation}
Moreover, assume that  the mapping $k:\reals\times \bar{D}\times \reals^2\to \reals$ is continuous, 
the mapping  $k(t,\xi,\cdot):\reals^2\to\reals$ has linear growth and is locally Lipschitz-continuous, uniformly with respect to $(t,\xi) \in\,\reals\times \bar{D}$, and the mapping $k(t,\xi,\cdot,\si_2):\reals\to\reals$ is monotone and locally Lipschitz-continuous, uniformly with respect to $(t,\xi) \in\,\reals\times \bar{D}$ and $\si_2 \in\,\reals$.

Then  all the  conditions  in Hypothesis \ref{H2bis} are  fulfilled if we define 
\[b_2(t,\xi,\si)=f(t,\xi,h(t,\xi,\si_2)+k(t,\xi,\si)),\ \ \ (t,\xi)\in\,\reals\times \bar{D},\ \ \ \si\in\,\reals^2,\]
for any $f:\reals\to\reals$ satisfying \eqref{cl30}.
 Notice that \eqref{new91} holds for
\[h(t,\xi,s)=-\beta(t,\xi)s^{2k+1}+\sum_{j=1}^{2k}\beta_j(t,\xi)s^j-\la s,\] with $\la$ large enough.

}
\end{Example}

Concerning the diffusion coefficients $g_1$ and $g_2$, we assume they satisfy the following conditions.

\begin{Hypothesis}
\label{H3}

\begin{enumerate}
\item The mappings $g_1: \bar{D}\times \reals\to \reals$ and $g_2:\reals\times \bar{D}\times \reals\to \reals$ are continuous and the mappings $g_1(\xi,\cdot):\mathbb{R}\to \mathbb{R}$ and $g_2(t,\xi,\cdot):\bar{D}\times \reals\to \reals$ are
Lipschitz-continuous, uniformly with respect to $\xi \in\, \bar{D}$ and $(t,\xi) \in\,\reals\times \bar{D}$, respectively. 

\item It holds
\begin{equation}
\label{new15}
\sup_{\xi \in\,\bar{D}}|g_1(\xi,\si)|\leq c\le(1+|\si|^{\frac 1{m_1}}\r),\ \ \ \si \in\,\reals,
\end{equation}
and
\begin{equation}
\label{new14}
\sup_{(t,\xi) \in\,\reals\times \bar{D}}|g_2(t,\xi,\si)|\leq c\le(1+|\si|^{\frac{1}{m_2}}\r),\ \ \ \si \in\,\reals,
\end{equation}
\end{enumerate}
where $m_1$ and $m_2$ are the  constants introduced in \eqref{boundgb1} and \eqref{boundgb2}.

\end{Hypothesis}

\begin{Remark}
{\em We are assuming here that the diffusion coefficient $g_2$ in the fast equation does not depend on the slow variable  because of what is required in the proof of Proposition \ref{Prop3}. If the coefficient $b_2$ in the fast equation had linear growth, then we could allow $g_2$ to depend also on the slow variable.}
\end{Remark}

\medskip
In what follows, for any $t \in\,\reals$, and $x, y \in\,E$   we shall set
\[B_1( x,y)(\xi):=b_1(\xi,x(\xi),y(\xi)),\ \ \ \ B_2(t, x,y)(\xi):=b_2(t,\xi,x(\xi),y(\xi)),\ \ \ \ \xi \in\,\bar{D},\]
and
\[B(t):=(B_1,B_2(t)),\ \ \ \ t \in\,\reals.\]

Due to Hypotheses \ref{H2} and \ref{H2bis}, the mappings $B_1$ and $B_2$ are well defined and continuous from $ E\times E$ and $\reals\times E\times E$, respectively, to $E$, so that $B:\reals\times E\times E\to E\times E$ is well defined and continuous. As the mappings $b_1$ and $b_2$ have polynomial growth, $B(t)$ is not well defined in $\mathcal{H}$. 

In view of  \eqref{boundgb1} and \eqref{boundgb2},  for any $x, y \in\,E$ and $t \in\,\reals$, we have
\begin{equation}
\label{5.4} |B_1(x,y)|_E\leq c\,\le(1+|x|_E^{m_1}+|y|_E\r),\ \ \ |B_2(t,x,y)|_E\leq c\,\le(1+|x|_E+|y|_E^{m_2}\r),
\end{equation}
so that
\begin{equation}
\label{5.4prod} |B(t,x,y)|_{E\times E}\leq c\,\le(1+|x|_E^{m_1}+|y|_E^{m_2}\r),\ \ \ x,y
\in\,E,\ \ \ t \in\,\reals.
\end{equation}
As a consequence of \eqref{nl101} and \eqref{nl101bis}, it is immediate to check that for any  $x,y,h, k \in\,E$, for any $t \in\,\reals$ and for any $\d \in\,\mathcal{M}_h$
 \begin{equation}
\label{5.5E} \le<B_1(x+h,y+k)-B_1(x,y),\d\r>_{E}\leq c\,\le(1+|h|_{E}+|k|_E+|x|_{E}+|y|_E\r),\end{equation}
and
\[ \le<B_2(t,x+h,y+k)-B_2(t,x,y),\d\r>_{E}\leq c\,\le(1+|h|_{E}+|k|_E+|x|_{E}+|y|_E\r),\]
so that for any $(x, y), (h,k) \in\,E\times E$, for any   $t \in\,\reals$ and $\d \in\,\mathcal{M}_{(h,k)}$
\begin{equation}
\label{5.5H} \le<B(t,x+h, y+k)-B(t,x,y),\d\r>_{E\times E}
\leq c\,\le(1+|(h,k)|_{E\times E}+|(x,y)|_{E\times E}\r).
\end{equation}
Moreover, 
from \eqref{nl102} we have
\begin{equation}
\label{5.5E2} 
\le<B_2(t,x,y+k)-B_2(t,x,y),\d\r>_E\leq 0,
\end{equation}
for every $\d \in\,\mathcal{M}_k$.
Finally,
in view of \eqref{nl1200} we have
\begin{equation}
 \label{5.5k}
\begin{array}{ll}
\ds{|B_1(x_1,y_1)-B_1(x_2,y_2)|_E}\\
\vs
\ds{\leq c\,\le(1+|(x_1,y_1)|_{E\times E}^\theta+|(x_2,y_2)|_{E\times E}^\theta\r)\le(|x_1-x_2|_{E}+|y_1-y_2|_E\r).}
\end{array}
\end{equation}

\medskip

Next, for any $x,y,z \in\,E$ and $t \in\,\reals$ we define
\[[G_1(x)z](\xi)=g_1(\xi,x(\xi))z(\xi),\ \ \ \ [G_2(t,y)z](\xi):=g_2(t,\xi,y(\xi))z(\xi),\ \ \ \ \xi \in\,\bar{D}.\]
Due to Hypothesis \ref{H3},  the mappings
\[G_1: E\to \mathcal{L}(E)\] and, for any fixed $t \in\,\reals$,
\[G_2(t,\cdot):E \to\mathcal{L}(E)\]
are Lipschitz continuous,  so that the same is true for the mapping $G(t)=(G_1,G_2(t))$ defined on $E\times E$, with values in ${\cal L}(E\times E)$.

\section{Almost periodic functions}
\label{sec3}

We recall here some definitions and results about almost periodic  functions. For all details, we refer to the monographs \cite{besi} and \cite{fink} and the paper \cite{bochner}.

In what follows,  $(X, d_X)$ and $(Y,d_Y)$  denote two complete metric spaces. 
For any bounded function $f:\reals\to Y$ and $\e>0$, we define
\[T(f,\e)=\le\{ \tau \in\,\reals\,:\,d_Y(f(t+\tau),f(t))<\e,\ \text{for all}\ t \in\,\reals\r\}.\]
$T(f,\e)$ is called $\e$-translation set of $f$.

\begin{Definition}
\begin{enumerate}
\item  A continuous function $f:\reals\to Y$ is said to be {\em almost periodic} if, for all $\e>0$ the set $T(f,\e)$ is {\em relatively dense} in $\reals$, that is  there exists a number $l_{\e}>0$ such that  $[a,a+l_\e]\cap T(f,\e)\neq \emptyset$, for every $a \in\,\reals$. The number $l_\e$ is called {\em inclusion length}.

\item Let $F\subset X$ and, for any $x \in\,F$, let $f(\cdot,x):\reals\to Y$ be an almost periodic function. The family of functions $\{f(\cdot,x)\}_{x \in\,F}$ is said {\em uniformly almost periodic} if for any $\e>0$
\[T(F,f,\e):=\bigcap_{x \in\,F}T(f(\cdot,x),\e)\]
is relatively dense in $\reals$ and includes an interval around $0$.
\end{enumerate}

\end{Definition}

In what follows,  if $f:\mathbb{R}\to Y$ or $f:\reals\times X\to Y$ and if $\gamma=\{\gamma_n\}_{n \in\,\nat}$ is a sequence in $\reals$, we shall use the notation $T_\gamma f=g$ to say, respectively, that
\[\lim_{n\to\infty}f(t+\gamma_n)=g(t),\ \ \ \text{in}\ Y,\]
and
\[\lim_{n\to\infty}f(t+\gamma_n,x)=g(t,x),\ \ \ \text{in}\ Y,\]
for any $t \in\,\reals$ and $x \in\,X$.

We recall here some characterization of uniformly almost periodic families of functions. 

\begin{Theorem}
Let $F\subset X$ and let $f(\cdot,x):\reals\to Y$ be a continuous function, for any $x \in\,F$.
The following statements are equivalent.
\begin{enumerate}
\item The family $\{f(\cdot,x)\}_{x \in\,F}$ is uniformly almost periodic.
\item For any sequence $\gamma^\prime=\{\gamma^\prime_n\}_{n \in\,\nat}\subset \reals$ there exists a subsequence $\gamma \subset \gamma^\prime$ and a continuous function $g:\reals\times X\to Y$ such that $T_{\gamma} f=g$, uniformly on $\reals\times F$.

\item For every two sequences $\gamma^\prime$ and $\beta^\prime$ in $\reals$ there exist common  subsequences $\gamma\subset \gamma^\prime$ and $\beta\subset \beta^\prime$ such that $T_{\gamma+\beta}f=T_\gamma T_\beta f$, uniformly on $\reals\times F$.
\end{enumerate}
\end{Theorem}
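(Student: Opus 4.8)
The plan is to establish the cycle of implications $(1)\Rightarrow(2)\Rightarrow(3)\Rightarrow(1)$, following the classical Bochner argument but carrying the uniformity over $F$ throughout.

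\textbf{Step 1: $(1)\Rightarrow(2)$.} The starting remark is that the second clause in the definition of uniform almost periodicity — that $T(F,f,\e)$ contains an interval $(-\d,\d)$ around $0$ for every $\e$ — says exactly that $\{f(\cdot,x)\}_{x\in F}$ is equicontinuous in $t$, uniformly over $x\in F$. Given a sequence $\gamma^\prime$ and $\e>0$, I would let $l$ be an inclusion length for $T(F,f,\e/3)$ and $\d$ with $(-\d,\d)\subset T(F,f,\e/3)$, then for each $n$ pick $\tau_n\in[\gamma^\prime_n-l,\gamma^\prime_n]\cap T(F,f,\e/3)$ and set $r_n=\gamma^\prime_n-\tau_n\in[0,l]$; after passing to a subsequence, $r_n\to r^\ast$, so $|r_n-r_m|<\d$ for large $n,m$. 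Writing $f(t+\gamma^\prime_n,x)=f((t+r_n)+\tau_n,x)$ and comparing with $f(t+r_n,x)$ and $f(t+r_m,x)$: the two ``$\tau$-terms'' are $<\e/3$ since $\tau_n,\tau_m\in T(F,f,\e/3)$, and the middle term is $<\e/3$ by the uniform equicontinuity, so $\sup_{t,x}d_Y(f(t+\gamma^\prime_n,x),f(t+\gamma^\prime_m,x))<\e$ for large $n,m$. Doing this for $\e=1/k$ and extracting a diagonal subsequence $\gamma$, the translates $f(\cdot+\gamma_n,\cdot)$ are uniformly Cauchy on $\reals\times F$; completeness of $Y$ gives a uniform limit $g$, continuous (uniform limit of continuous functions) and inheriting the uniform equicontinuity — this is $(2)$, the limit being defined on $\reals\times F$ and regarded as a function on $\reals\times X$ by the same abuse as in the statement.

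\textbf{Step 2: $(2)\Rightarrow(3)$.} Given $\gamma^\prime,\beta^\prime$, I would first use $(2)$ to pass to a common index subsequence along which $T_\beta f=h$ uniformly on $\reals\times F$. The device that makes everything work is that uniform-on-$\reals$ convergence survives the substitution $s\mapsto t+\gamma_n$: from $\sup_{s,x}d_Y(f(s+\beta_n,x),h(s,x))\to0$ one gets $\sup_{t,x}d_Y(f(t+\gamma_n+\beta_n,x),h(t+\gamma_n,x))\to0$. Next I need $T_\gamma h$ to exist uniformly; since $h$ is a uniform limit of the translates $f(\cdot+\beta_j,\cdot)$, for each fixed $j$ the sequence $\{f(\cdot+\gamma_n+\beta_j,\cdot)\}_n$ has a uniformly Cauchy subsequence by $(2)$, and a diagonal argument over $j\in\nat$ combined with $d_Y(h(t+\gamma_n,x),h(t+\gamma_m,x))\le2\sup_{s,x}d_Y(h(s,x),f(s+\beta_j,x))+d_Y(f(t+\gamma_n+\beta_j,x),f(t+\gamma_m+\beta_j,x))$ (take $j$ large, then $n,m$ large) yields $T_\gamma h=k$ uniformly for a suitable $\gamma$. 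Refining once more so that $T_{\gamma+\beta}f$ converges uniformly, the two estimates above give $T_{\gamma+\beta}f=k=T_\gamma T_\beta f$ uniformly on $\reals\times F$.

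\textbf{Step 3: $(3)\Rightarrow(1)$, by contraposition.} Suppose $\{f(\cdot,x)\}_{x\in F}$ is not uniformly almost periodic. If it fails to be uniformly equicontinuous, there are $\e_0>0$, $x_k\in F$, $s_k,s_k^\prime$ with $s_k^\prime-s_k\to0$ but $d_Y(f(s_k,x_k),f(s_k^\prime,x_k))\ge\e_0$; applying $(3)$ to $\gamma^\prime=(s_k)$, $\beta^\prime=(s_k^\prime-s_k)$ and noting $T_\beta f=f$ since $\beta_n\to0$, both $f(\cdot+\gamma_n+\beta_n,\cdot)$ and $f(\cdot+\gamma_n,\cdot)$ would converge uniformly on $\reals\times F$ to the same limit, contradicting $d_Y(f(\gamma_n+\beta_n,x_{k(n)}),f(\gamma_n,x_{k(n)}))\ge\e_0$. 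Otherwise $T(F,f,\e_0)$ is not relatively dense for some $\e_0$, so for every $l$ there is an interval of length $l$ missing it; I would recursively pick intervals $I_n=(\alpha_n,\alpha_n+L_n)$ disjoint from $T(F,f,\e_0)$, with $L_n>2(\gamma^\prime_{n-1}+1)$ and $\alpha_n$ large, where $\gamma^\prime_n$ denotes the midpoint of $I_n$, arranged so that $\gamma^\prime_n-\gamma^\prime_m\in I_n$ for all $m<n$. Applying $(3)$ to $\gamma^\prime=(\gamma^\prime_n)$ and $\beta^\prime=(-\gamma^\prime_n)$ gives common subsequences (necessarily $\beta_n=-\gamma_n$) with $f=T_{\gamma+\beta}f=T_\gamma T_{-\gamma}f$ uniformly; in particular $f(\cdot-\gamma_n,\cdot)$ is uniformly Cauchy on $\reals\times F$, which forces $\gamma_n-\gamma_m\in T(F,f,\e_0)$ for all large $n,m$ (translation sets being symmetric) — impossible, since $\gamma_n-\gamma_m\in I_{k(n)}$. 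Hence the family is uniformly almost periodic.

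\textbf{Expected obstacle.} The most delicate part is $(3)\Rightarrow(1)$, and inside it the recursive choice of the gaps $I_n$: one must position them (far to the right, with rapidly increasing lengths) so that \emph{every} difference $\gamma^\prime_n-\gamma^\prime_m$ with $m<n$ lands inside a gap of $T(F,f,\e_0)$, thereby defeating any adversarial common subsequence allowed by $(3)$. The bookkeeping in Step 2 — producing $T_\gamma$ of the limit $h$ via a diagonal over $j$ and then re-refining so that $T_{\gamma+\beta}f$ itself converges — is the other technical point, though routine once the substitution $s\mapsto t+\gamma_n$ is exploited.
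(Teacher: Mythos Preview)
The paper does not give its own proof of this theorem: it is stated in Section~\ref{sec3} as a known characterization and attributed to the references \cite{besi}, \cite{fink}, \cite{bochner}. So there is nothing to compare against in the paper itself; you have reconstructed the classical Bochner argument, which is exactly what those references contain.

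Your argument is essentially correct. Two remarks:

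\emph{Step 2 can be shortened.} Once you apply $(2)$ to the sequence $(\gamma'_n+\beta'_n)_n$ to get a common-index subsequence with $T_{\gamma+\beta}f=g$ uniformly, and then refine via $(2)$ applied to $\beta$ to get $T_\beta f=h$ uniformly, the substitution trick you already identified finishes the job directly: from $\sup_{s,x}d_Y(f(s+\beta_n,x),h(s,x))\to0$ with $s=t+\gamma_n$ you get $\sup_{t,x}d_Y(f(t+\gamma_n+\beta_n,x),h(t+\gamma_n,x))\to0$, hence $h(t+\gamma_n,x)\to g(t,x)$ uniformly, i.e.\ $T_\gamma h=g$. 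The diagonal over $j$ is unnecessary.

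\emph{Step 3, gap construction.} Your recursion is the right idea, but the condition should read $L_n>2\max_{m<n}|\gamma'_m|$ (absolute values, and the maximum over all earlier midpoints), since you do not control the sign or location of the gap $I_n$ --- only its length. With that correction, $\gamma'_n-\gamma'_m\in I_n$ for $m<n$ follows as you say, and symmetry of $T(F,f,\e_0)$ gives the contradiction with the uniform Cauchy property of $f(\cdot-\gamma_n,\cdot)$.
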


Notice that if $f:\reals\to X$ is a continuous periodic function, with period $\tau$, then for any sequence $\gamma \subset \reals$ there exists $r_\gamma \in\,[0,\tau]$ such that $T_\gamma f(t)=f(t+r_\gamma)$, uniformly with respect  $t \in\,\reals$. 
In fact, if we denote by $H(f)$ the {\em hull} of $f$, that is the set of functions $\{T_\gamma f\,:\ \gamma=\{\gamma_n\}\subset \reals\}$, we have that $f$ is periodic if and only if $H(f)=\{f(\tau+\cdot)\,:\,\tau \in\,\reals\}$.

\medskip

In the case of a  function $f:\reals\to Y$, we have the following characterization of almost periodicity.

\begin{Theorem}
\label{teo3.3}
A continuous function $f:\reals\to Y$ is almost periodic if and only if for every two sequences $\gamma^\prime$ and $\beta^\prime$ in $\reals$ there exist common  subsequences $\gamma\subset \gamma^\prime$ and $\beta\subset \beta^\prime$ such that $T_{\gamma+\beta}f=T_\gamma T_\beta f$,
pointwise on $\reals$.

\end{Theorem}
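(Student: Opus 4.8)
The plan is to prove the two implications separately. The forward direction is the easy one: if $f:\reals\to Y$ is almost periodic, then by the equivalence in the preceding Theorem (applied with $X$ a singleton, so $F=\{x_0\}$ and the ``uniform'' qualifier is vacuous), statement (3) of that theorem gives exactly the Bochner-type condition: for every pair of sequences $\gamma^\prime$, $\beta^\prime$ there are common subsequences $\gamma\subset\gamma^\prime$, $\beta\subset\beta^\prime$ with $T_{\gamma+\beta}f=T_\gamma T_\beta f$ uniformly on $\reals$, hence in particular pointwise on $\reals$. So this direction is immediate from what is already available, and I would just cite it.

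The substantive direction is the converse: assuming the pointwise double-sequence condition, show $f$ is almost periodic. First I would establish that $f$ is \emph{normal} in Bochner's sense, i.e. every sequence $\gamma^\prime$ has a subsequence $\gamma$ for which $T_\gamma f$ exists pointwise on $\reals$ and, in fact, the convergence is uniform on $\reals$. Extracting a pointwise-convergent subsequence on a countable dense subset of $\reals$ (or on $\mathbb Q$) is routine by a diagonal argument, using boundedness of $f$ and completeness of $Y$; call the limit $g$ on that dense set. The key point is to upgrade this to uniform convergence on all of $\reals$, and this is where the double-sequence hypothesis does its work: if the convergence $f(\cdot+\gamma_n)\to g$ were not uniform, one could find $\e_0>0$, points $t_n\in\reals$, and indices so that $d_Y(f(t_n+\gamma_n),f(t_n+\gamma_m))\ge\e_0$ along a subsequence; feeding the sequences $\{\gamma_n\}$ and $\{-\gamma_m\}$ (suitably interleaved, or more precisely applying the hypothesis to $\gamma^\prime=\{\gamma_n\}$ and $\beta^\prime=\{-\gamma_m\}$ after passing to subsequences) into the identity $T_{\gamma+\beta}f=T_\gamma T_\beta f$ forces a contradiction, because the left side would be (a translate of) $f$ evaluated at the limit while the right side, composed through $g$, cannot jump by $\e_0$. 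The main obstacle is precisely this bookkeeping: making the contradiction argument clean requires carefully choosing which subsequences to pass to and tracking at which points the functions are evaluated, and it is essentially the heart of Bochner's theorem.

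Once normality (equivalently, relative compactness of the hull $H(f)$ in the topology of uniform convergence on $\reals$) is in hand, the conclusion that $f$ is almost periodic is classical: given $\e>0$, cover $H(f)$ by finitely many balls of radius $\e/3$ centered at translates $f(\cdot+s_1),\dots,f(\cdot+s_N)$; then for any $a\in\reals$ the translate $f(\cdot+a)$ lies in one of these balls, say the $j$-th, and $\tau:=a-s_j$ satisfies $d_Y(f(t+\tau),f(t))<\e$ for all $t$ after a short triangle-inequality estimate, while $|\tau-a|\le\max_i|s_i|=:l_\e$, giving relative density of $T(f,\e)$ with inclusion length $l_\e$. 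Continuity of $f$ is part of the hypothesis. I would also remark, en route, that the pointwise double-sequence condition already implies $f$ is uniformly continuous (another standard consequence), though that is not strictly needed for the statement as phrased. Thus the only genuinely delicate step is the passage from pointwise to uniform convergence of translates, which I expect to occupy the bulk of the argument.
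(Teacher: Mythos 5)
The paper itself does not prove Theorem \ref{teo3.3}: it is recalled as Bochner's classical double-sequence criterion and the proof is delegated to the cited references (\cite{bochner}, \cite{besi}, \cite{fink}), so there is no internal argument to compare yours against. Your plan reproduces the standard Bochner proof and is sound in outline: the forward implication by specializing the preceding theorem to a one-point family $F$, and the converse by upgrading pointwise convergence of translates to uniform convergence (normality), then deducing relative density of $T(f,\e)$ from total boundedness of the hull via the usual $\e/3$ covering argument. Two points need tightening. First, in the converse direction boundedness of $f$ is not among the hypotheses, so you cannot invoke it to run the diagonal extraction; but you do not need it, since the double-sequence condition itself (applied, say, with $\beta'$ the zero sequence) already produces, for every $\gamma'$, a subsequence along which the translates converge pointwise --- the identity $T_{\gamma+\beta}f=T_\gamma T_\beta f$ presupposes the existence of all limits involved --- and it also yields relative compactness of the range of $f$ if you want it. Second, the heart of the argument, the passage from pointwise to uniform convergence, is only gestured at, and the pairing you suggest ($\gamma'=\{\gamma_n\}$, $\beta'=\{-\gamma_m\}$) is not the one that works cleanly. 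The standard bookkeeping is: assume $T_\gamma f=g$ pointwise but the convergence is not uniformly Cauchy, pick $\e_0>0$, points $t_k$ and indices $n_k<m_k$ with $d_Y\bigl(f(t_k+\gamma_{n_k}),f(t_k+\gamma_{m_k})\bigr)\geq\e_0$, then apply the hypothesis successively to the pairs $\bigl(\{t_k\},\{\gamma_{n_k}\}\bigr)$ and $\bigl(\{t_k\},\{\gamma_{m_k}\}\bigr)$, passing to a single common subsequence in $k$; since $T_{\beta}f=g$ pointwise along any subsequence of $\gamma$, evaluating both identities $T_{\gamma+\beta}f=T_\gamma T_\beta f$ at the point $0$ gives that $\lim_k f(t_k+\gamma_{n_k})$ and $\lim_k f(t_k+\gamma_{m_k})$ both equal $\lim_k g(t_k)$, contradicting the $\e_0$-separation. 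With that substitution your plan is a complete and correct route to the classical result the paper quotes.
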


\medskip

Finally, in \cite[Theorem I.3.2]{besi}  the following important result for almost periodic functions is shown.
\begin{Theorem}
\label{cl11}
\begin{enumerate}
\item There exists the  {\em mean value} in $Y$ of any almost periodic function $f:\reals\to Y$,   that is
\[\exists \lim_{T\to \infty}\frac 1T\int_0^T f(s)\,ds \in\,Y.\]
Moreover, for every $t \in\,\reals$ 
\[ \lim_{T\to \infty}\frac 1T\int_{t}^{t+T} f(s)\,ds= \lim_{T\to \infty}\frac 1T\int_0^T f(s)\,ds,\]
uniformly with respect to $t \in\,\reals$.

\item If $\{f(\cdot,x)\}_{x \in\,F}$ is a uniformly almost periodic family of functions, with $F\subset X$,  then
\[ \exists \lim_{T\to \infty}\frac 1T\int_{t}^{t+T} f(s,x)\,ds= \lim_{T\to \infty}\frac 1T\int_0^T f(s,x)\,ds,\]
uniformly with respect to $t \in\,\reals$ and $x \in\,F$.
\end{enumerate}

\end{Theorem}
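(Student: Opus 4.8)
The plan is to work directly with the window averages
\[
M^T_a(f):=\frac 1T\int_a^{a+T}f(s)\,ds,\qquad T>0,\ a \in\,\reals,
\]
where $Y$ is understood as a Banach space with norm $|\cdot|_Y$ (so that these Bochner integrals make sense), and to show that $M^T_a(f)$ converges in $Y$, as $T\to\infty$, to a limit independent of $a$, with convergence uniform in $a$. The first observation is that an almost periodic $f:\reals\to Y$ is bounded: if not, take $t_n$ with $|f(t_n)|_Y\to\infty$ and, using that $T(f,1)$ is relatively dense with some inclusion length $l_1$, pick $\tau_n \in\,T(f,1)\cap[-t_n,-t_n+l_1]$, so that $t_n+\tau_n \in\,[0,l_1]$ and $|f(t_n)|_Y\leq 1+|f(t_n+\tau_n)|_Y\leq 1+\max_{[0,l_1]}|f|_Y<\infty$, a contradiction. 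Write $M:=\sup_{t\in\reals}|f(t)|_Y<\infty$.

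The heart of the argument is the estimate
\begin{equation}
\label{core}
|M^T_a(f)-M^T_0(f)|_Y\leq \e+\frac{2\,l_\e\,M}{T},\qquad T>0,\ a \in\,\reals,
\end{equation}
valid for every $\e>0$, where $l_\e$ is an inclusion length for $T(f,\e)$. To obtain \eqref{core}, choose $\tau \in\,T(f,\e)\cap[a,a+l_\e]$ and substitute $s=u+\tau$ in $\int_a^{a+T}f(s)\,ds$: since $|f(u+\tau)-f(u)|_Y<\e$ for all $u$, this replaces $M^T_a(f)$ by $\frac 1T\int_{a-\tau}^{a-\tau+T}f(u)\,du$ up to an error $\leq\e$; and since $a-\tau \in\,[-l_\e,0]$, replacing the window $[a-\tau,a-\tau+T]$ by $[0,T]$ costs at most $2\,l_\e\,M/T$ (two integrals over intervals of length $|a-\tau|\leq l_\e$, each bounded by $l_\e M$). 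Combining the two gives \eqref{core}.

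Now we check that $(M^T_0(f))_{T>0}$ is Cauchy. Fix $\eta>0$, set $\e=\eta/3$ and choose $T_0\geq 6\,l_\e\,M/\eta$, so that the right-hand side of \eqref{core} is $\leq 2\eta/3$ for all $T\geq T_0$ and all $a$. Given $S\geq T_0$, write $S=n T_0+r$ with $n\geq 1$ and $0\leq r<T_0$. Since $M^{nT_0}_0(f)=\frac 1n\sum_{k=0}^{n-1}M^{T_0}_{k T_0}(f)$, applying \eqref{core} with $a=k T_0$ yields $|M^{nT_0}_0(f)-M^{T_0}_0(f)|_Y\leq 2\eta/3$; and splitting off the last piece, $|M^S_0(f)-M^{nT_0}_0(f)|_Y\leq 2\,T_0\,M/S\leq 2M/n$. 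Hence $|M^S_0(f)-M^{T_0}_0(f)|_Y\leq 2\eta/3+2M/n$, which is $<\eta$ as soon as $n=\lfloor S/T_0\rfloor>6M/\eta$, i.e. for all $S$ beyond an explicit threshold. Therefore $|M^S_0(f)-M^{S'}_0(f)|_Y<2\eta$ for all sufficiently large $S,S'$, and by completeness of $Y$ the limit $m(f):=\lim_{T\to\infty}M^T_0(f)$ exists. Passing to the limit $T\to\infty$ in \eqref{core} gives $\limsup_{T\to\infty}|M^T_a(f)-m(f)|_Y\leq\e$ uniformly in $a$, and since $\eta$ (hence $\e$) is arbitrary, $M^T_a(f)\to m(f)$ uniformly in $a\in\reals$; with $a=t$ this is precisely statement 1.

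Finally, statement 2 requires no new idea: the only data entering the estimates above are $\sup_t|f(t)|_Y$ and the inclusion length $l_\e$. When $\{f(\cdot,x)\}_{x\in F}$ is uniformly almost periodic, $\sup_{x\in F}\sup_{t\in\reals}|f(t,x)|_Y<\infty$ and, since $T(F,f,\e)=\bigcap_{x\in F}T(f(\cdot,x),\e)$ is relatively dense, a single inclusion length $l_\e$ works simultaneously for all $x\in F$; hence \eqref{core} holds for $f(\cdot,x)$ with $M$ and $l_\e$ independent of $x$, and the Cauchy argument produces $m(f(\cdot,x))$ with $M^T_a(f(\cdot,x))\to m(f(\cdot,x))$ uniformly in $a$ and $x$. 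I expect the only point needing care to be the two-scale Cauchy step: one must fix a reference length $T_0$ calibrated to the target $\eta$ (so that both the $\e$-term and the $l_\e M/T$-term in \eqref{core} are controlled) before letting $S\to\infty$, and keep track of the averaging over the $n=\lfloor S/T_0\rfloor$ consecutive windows; everything else is standard translation-number bookkeeping, and the uniform-family case is just the remark that all constants were chosen uniformly in $x$.
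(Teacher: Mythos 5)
Your proof of part 1 is correct: it is the classical Bohr--Besicovitch translation-number argument (boundedness of an almost periodic function, the window-shift estimate via an $\epsilon$-translation number in $[a,a+l_\epsilon]$, the Cauchy argument over blocks of a fixed length $T_0$, and the uniform-in-$t$ conclusion). Note that the paper itself offers no proof of this statement: it cites \cite[Theorem I.3.2]{besi} and adds a remark that the extension to uniformly almost periodic families is an easy adaptation, so your write-up supplies in full the argument the paper merely references.

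There is, however, one unjustified step in your part 2: the claim that uniform almost periodicity of $\{f(\cdot,x)\}_{x\in F}$ implies $\sup_{x\in F}\sup_{t\in\mathbb{R}}|f(t,x)|_Y<\infty$. Under the paper's definition (the common translation sets $T(F,f,\epsilon)$ are relatively dense and contain an interval around $0$) this is false in general: take $f(t,x)=g(x)$ independent of $t$ with $g$ unbounded on $F$; then every $\tau\in\mathbb{R}$ is a common $\epsilon$-translation number, yet the family is not uniformly bounded. Since your threshold $T_0$ is calibrated to $M$, uniformity in $x$ is not yet established as written. The gap is easy to close in either of two ways: (i) in the paper's actual application (Lemma \ref{lemma4.1}, where $F=K$ is compact and the functions are the averages in \eqref{cl72}), uniform boundedness follows from \eqref{nl122} and \eqref{cl50}, so your argument applies verbatim there; or (ii) in general, replace $f(\cdot,x)$ by $f(\cdot,x)-f(0,x)$: combining an inclusion length $l_1$ for $T(F,f,1)$ with the interval $(-\delta_1,\delta_1)\subset T(F,f,1)$ around $0$, a chaining argument over $[0,l_1]$ in steps smaller than $\delta_1$ gives $\sup_{x\in F}\sup_{t\in\mathbb{R}}|f(t,x)-f(0,x)|_Y\leq 1+\bigl(\lceil l_1/\delta_1\rceil+1\bigr)$, so the recentered family is uniformly bounded and uniformly almost periodic with the same translation sets, while the subtracted constant is averaged exactly for every $T$; applying your argument to the recentered family then yields part 2. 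With either repair the proof is complete.
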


\begin{Remark}
{\em The proof of \cite[Theorem I.3.2]{besi} is given for a single almost periodic function $f$. Nevertheless, it is easy to adapt the arguments used in that proof to the case of uniformly almost periodic families of functions, as stated in the second part of Theorem \ref{cl11}.}
\end{Remark}

\section{The slow-fast system}

With the notations introduced in Section \ref{sec2}, system  
\eqref{eq0} can be rewritten in the following abstract form.

\begin{equation}
\label{astrattoEory}
 \le\{\begin{array}{l} \ds{du_{\e}(t)=\le[A_1
u_{\e}(t)+B_{1}(u_{\e}(t),v_{\e}(t))\r]\,dt+G_1(u_{\e}(t))\,dw^{Q_1}(t),}\\
\vs \ds{dv_{\e}(t)=\frac 1\e \le[(A_2(t/\e)-\a)
v_{\e}(t)+B_2(t/\e,u_{\e}(t),v_{\e}(t))\r]\,dt+\frac 1{\sqrt{\e}}\,
G_2(t/\e,v_{\e}(t))\,dw^{Q_2}(t),}
\end{array}\r.
\end{equation}
with initial conditions $u_{\e}(0)=x \in\,E$ and $v_{\e}(0)=y
\in\,E$. 

In \cite[Theorem 5.3]{cerrai1}, a system analogous to \eqref{astrattoEory} has been studied, in the case of coefficients independent of time. Thanks to Lemma \ref{lemma2.1}, as all estimates satisfied by the coefficients in Hypotheses \ref{H1}, \ref{H2}, \ref{H2bis} and \ref{H3} are uniform with respect to $t \in\,\reals$, the arguments used in the proof of   \cite[Theorem 5.3]{cerrai1} can be adapted to the present situation and it is possible to show that, under Hypotheses \ref{H0}, \ref{H1}, \ref{H2}, \ref{H2bis} and  \ref{H3}, for any $\e>0$ and  $x,y \in\,E$ there exists a unique adapted mild solution to problem \eqref{astrattoEory} in $L^p(\Omega;C_b((s,T];E\times E))$, with $s<T$ and $p\geq 1$. 

This means that there exist two unique adapted processes $u_\e$ and $v_\e$ in $L^p(\Omega;C_b((s,T];E))$ such that 
\[ u_\e(t)=e^{t A_1}x+\int_s^t e^{(t-r)A_1}
B_1(u_\e(r),v_\e(r))\,ds+\int_s^t e^{(t-s)A_1}
G_1(u_\e(r))\,dw^{Q_1}(r),\] and
\[\begin{array}{l}
\ds{v_\e(t)=U_{\a,\e}(t,s)y+\frac 1\e \psi_{\a,\e}(v_\e;s)(t)+\frac 1\e \int_s^t U_{\a,\e}(t,r)B_2(r,u_\e(r),v_\e(r))\,dr}\\
\vs
\ds{+\frac 1{\sqrt{\e}}
\int_s^t U_{\a,\e}(t,r)G_2(r,v_\e(r))\,dw^{Q_2}(r),}
\end{array}\]
where, with the same notations as in Section \ref{sec2}, for every $\e>0$,
\[U_{\a,\e}(t,s)=e^{\frac 1\e\gamma(t,s)A_2-\frac {\a}\e(t-s)},\ \ \ \ s<t\]
and 
\[\psi_{\a,\e}(u;s)(r)=\int_s^r U_{\a,\e}(r,\rho)L(\rho) u(\rho)\,d\rho,\ \ \ \ r \in\,[s,t].\]
Recall that in Section \ref{sec2} we have defined
\[U_\a(t,s):=U_{\a,1}(t,s),\ \ \ \ \ \psi_{\a}(u;s)(r):=\psi_{\a,1}(u;s)(r).\]

Thanks to Lemma \ref{lemmafinl}, we can adapt to the present situation the  arguments used in the proof of \cite[Lemma 3.1]{pol}, and it is possible to show that  for any $p\geq 1$ and $s<T$ there exists a constant
$c_{p,s,T}>0$ such that for any $x, y \in\,E$ and $\e \in\,(0,1]$
\begin{equation} \label{stima2}
\E\,\sup_{t \in\,[s,T]}|u_{\e}(t)|_{E}^p\leq
c_{p,s,T}\,\le(1+|x|^p_E+|y|^p_E\r),\end{equation}
and
\begin{equation}
\label{new2}
\E\int_s^T|v_{\e}(t)|_E^p\,dt\leq
c_{p,s,T}\,\le(1+|x|^p_E+|y|^p_E\r),
\end{equation}
for some constants $c_{s,p,T}$ independent of $\e>0$.

Moreover, as in \cite[Proposition 3.2]{pol}, we can show that  there exists $\bar{\theta}>0$ such that for any $\theta \in\,[0,\bar{\theta})$, $x \in\,C^\theta(\bar{D})$, $y \in\,E$ and  $s<T$ 
\begin{equation}
\label{new6}
\sup_{\e \in\,(0,1]}\,\E\,|u_\e|_{L^\infty(s,T;C^\theta(\bar{\theta}))}\leq c_{s,T}\le(1+|x|_{C^\theta(\bar{D})}+|y|_E\r).
\end{equation}

Finally, by proceeding as in \cite[Proposition 4.4]{av2} (see also \cite[Proposition 3.3]{pol}), we can prove that for any $\theta>0$ there exists $\gamma(\theta)>0$ such that for any $T>0$, $p\geq 2$, $x \in\,C^\theta(\bar{D})$, $y \in\,E$ and $r_1,r_2 \in\,[s,t]$
\begin{equation}
\label{fin1000}
\sup_{\e \in\,(0,1)}\E\,|u_\e(r_1)-u_\e(r_2)|_E^p\leq c_p(T)\le(1+|x|_{C^\theta(\bar{D})}^{p m_1}+|y|_E^p\r)|r_1-r_2|^{\gamma(\theta) p}.
\end{equation}

Due to the Kolmogorov Test and the Ascoli-Arzel\`a Theorem, \eqref{new6} and \eqref{fin1000} imply that  the family $\{\mathcal{L}(u_{\e})\}_{\e \in\,(0,1]}$, given by the laws of the solutions $u_{\e}$, is tight in $C([s,T];E)$, for any $x \in\,C^\theta(\bar{D})$, with $\theta>0$, and for any $y \in\,E$. That is for every $\eta>0$ there exists a compact set $K_\eta\subset C([s,T];E)$ such that $\mathbb{P}\le(u_\e \in\,K_\eta\r)\geq 1-\eta$, for every $\e \in\,(0,1]$.

\section{An evolution family of measures for the fast equation}
\label{sec4}

For any frozen slow component $x \in\,E$, any initial condition $y \in\,E$ and any $s \in\,\reals$, we introduce the problem
\begin{equation}
\label{fast}
dv(t)=\le[(A_2(t)-\a)v(t)+B_2(t,x,v(t))\r]\,dt+G_2(t,v(t))\,d\bar{w}^{Q_2}(t),\ \ \ v(s)=y,
\end{equation}
where $A_2(t)=\gamma(t)A_2+L(t)$ and 
\[\bar{w}^{Q_2}(t)=\le\{
\begin{array}{ll}
\ds{w^{Q_2}_1(t),}  &  \ds{ \text{if}\ t\geq 0,}\\
& \vs
\ds{w^{Q_2}_2(-t), } &  \ds{\text{if}\ t<0,}
\end{array}\r.
\] for two independent $Q_2$-Wiener processes, $w_1^{Q_2}(t)$ and $w_2^{Q_2}(t)$, both defined as in \eqref{wiener}.
A $\{\mathcal{F}_t\}_{t\geq s}$-adapted process $v^x(\cdot;s,y) \in\,L^p(\Omega;C([s,T];E))$ is a {\em mild solution} of \eqref{fast} if
\[\begin{array}{l}
\ds{v^x(t;s,y)=U_{\a}(t,s)y+\psi_{\a}(v^x(\cdot;s,y);s)(t)}\\
\vs
\ds{+\int_s^t U_{\a}(t,r)\,B_2(r,x,v^x(r;s,y))\,dr+\int_s^t U_{\a}(t,r)\,G_2(r,v^x(r;s,y))\,d\bar{w}^{Q_2}(r),}
\end{array}\]
where $\psi_{\a}(\cdot;s)$ is the linear bounded operator defined  in \eqref{psilambda}, with $\e=1$. 

Moreover, if $C(\mathbb{R};E)$ is the space of continuous paths on $\mathbb{R}$ with values in $E$, endowed with the topology of uniform convergence on bounded intervals, a $\{\mathcal{F}_t\}_{t \in\,\reals}$-adapted process $v^x \in\,L^p(\Omega;C(\mathbb{R};E))$ is a {\em mild solution} of the equation 
\begin{equation}
\label{fast-R}
dv(t)=\le[(A_2(t)-\a)v(t)+B_2(t,x,v(t))\r]\,dt+G_2(t,v(t))\,d\bar{w}^{Q_2}(t),
\end{equation}
in $\reals$ if, for every $s<t$,
\[\begin{array}{l}
\ds{v^x(t)=U_{\a}(t,s)v^x(s)+\psi_{\a}(v^x;s)(t)}\\
\vs
\ds{+\int_s^t U_{\a}(t,r)\,B_2(r,x,v^x(r))\,dr+\int_s^t U_{\a}(t,r)\,G_2(r,v^x(r))\,d\bar{w}^{Q_2}(r).}
\end{array}\]

According to \eqref{psilambdaunif}, the mapping $\psi_{\a}(\cdot;s):C([s,T];E)\to C([s,T];E)$ is Lipschitz continuous, so that we can adapt the proof of \cite[Theorem 5.3]{cerrai1} to the present situation and we have that for any $x,y \in\,$ there exists a unique mild solution $v^x(\cdot;s,y) \in L^p(\Omega;C((s,T];E)\cap L^\infty((s,T);E))$, with $p\geq 1$ and $s<T$. 

\medskip

All this allows us to introduce, for any fixed $x \in\,E$,  the transition evolution operator
\[P^x_{s,t}\,\varphi(y)=\mathbb{E}\,\varphi(v^x(t;s,y)),\ \ \ \ s<t,\ \ y \in\,E,\]
where  $\varphi \in\,B_b(E).$

\medskip

For any $\la>0$, equation \eqref{fast} can be rewritten as
\[dv(t)=\le[(A_2(t)-\la)v(t)+B_{2,\la}(t,x,v(t))\r]\,dt+G_2(t,v(t))\,d\bar{w}^{Q_2}(t),\ \ \ \ v(s)=y,\]
where
\[B_{2,\la}(t,x,y)=B_2(t,x,y)+(\la-\a)\,y.\]

In what follows, for any $x \in\,E$ and any process $u \in\,L^p(\Omega;C_b((s,T];E))$, adapted, we shall set
\begin{equation}
\label{new19}
\Gamma_\la(u;s)(t)=\int_s^t U_{\la}(t,r)\,G_2(r,u(s))\,d\bar{w}^{Q_2}(s),\ \ \ \ t>s.
\end{equation}
By proceeding as in the proof of  \cite[Lemma 7.1]{cerrai2}, where the case $s=0$ was considered, it is possible to show that there exists $\bar{p}>1$ such that for any  $p\geq \bar{p}$ and $0<\d<\la$ and for any $u,v \in\,L^p(\Omega;C_b((s,t];E))$, with $s<t$,
\begin{equation}
\label{new15bis}
\begin{array}{l}
\ds{\sup_{r \in\,[s,t]} e^{\d p (r -s)}\E\,|\Gamma_\la(u;s)(r)-\Gamma_\la(v;s)(r)|_E^p}\\
\vs
\ds{\leq c_{p,1}\frac{L_{g_2}^p}{(\la-\d)^{c_{p,2}}}\sup_{r \in\,[s,t]} e^{\d p(r-s)}\,\E\,|u(r)-v(r)|_E^p,}
\end{array}
\end{equation}
where $L_{g_2}$ is the Lipschitz constant of $g_2$ and $c_{p,1}, c_{p,2}$ are two suitable positive constants, independent of $\la>0$ and  $s<t$.

Moreover, using \eqref{new14}, we can show that
\begin{equation}
\label{new16}
\sup_{r \in\,[s,t]} e^{\d p(r-s)}\E\,|\Gamma_\la(u;s)(r)|_E^p\leq c_{p,1}\frac{M_{g_2}^p}{(\la-\d)^{c_{p,2}}}\sup_{r \in\,[s,t]} e^{\d p(r-s)}\,\le(1+\E\,|u(r)|_E^{\frac{p}{m_2}}\r),
\end{equation}
where
\[M_{g_2}=\sup_{\xi \in\,\bar{D},\ \si \in\,\reals}\frac{|g_2(\xi,\si)|}{1+|\si|^{\frac 1{m_2}}},\]
(see \cite[Remark 3.2]{cerrai2}). In fact, in \cite{cerrai2} it is shown that there exists some $\eta>0$ such that for any $p\geq 1$ large enough
\[\sup_{r \in\,[s,t]} e^{\d p(r-s)}\E\,|\Gamma_\la(u;s)(r)|_{\eta,p}^p\leq c_{p,1}\frac{M_{g_2}^p}{(\la-\d)^{c_{p,2}}}\sup_{r \in\,[s,t]} e^{\d p(r-s)}\,\le(1+\E\,|u(r)|_E^{\frac{p}{m_2}}\r).
\]
This means that if we pick $\bar{p}\geq 1$ such that $\eta \bar{p}>d$ and define $\theta=\eta -d/\bar{p}$, by the Sobolev embedding theorem we have that for any $p\geq \bar{p}$
\begin{equation}
\label{new16-cteta}
\sup_{r \in\,[s,t]} e^{\d p(r-s)}\E\,|\Gamma_\la(u;s)(r)|_{C^\theta(\bar{D})}^p\leq c_{p,1}\frac{M_{g_2}^p}{(\la-\d)^{c_{p,2}}}\sup_{r \in\,[s,t]} e^{\d p(r-s)}\,\le(1+\E\,|u(r)|_E^{\frac{p}{m_2}}\r).
\end{equation}

Now, for any fixed adapted process $u \in\,L^p(\Omega;C_b((s,T];E))$, let us introduce the  problem
\begin{equation}
\label{linear}
dz(t)=(A_2(t)-\la)z(t)\,dt+G_2(t,u(t))\,d\bar{w}^{Q_2}(t),\ \ \ \ z(s)=0,
\end{equation}
and let us denote by $\La_\la(u;s)$ its unique mild solution in $L^p(\Omega;C_b((s,T];E))$.
This means that $\La_\la(u;s)$ solves the equation
\[\La_\la(u;s)(t)=\psi_\la(\La_\la(u;s);s)(t)+\Gamma_\la(u;s)(t),\ \ \ s<t<T.\]
Due to Lemma \ref{lemma2.1}, for any $0<\d<\la$ and  $p\geq 1$ large enough, and for any two adapted processes $u_1$ and $ u_2$ in  $L^p(\Omega;C_b((s,T];E))$, with $s<t$, we have
\[\begin{array}{l}
\ds{e^{\d p (t-s)}\,\mathbb{E}\,|\La_\la(u_1;s)(t)-\La_\la(u_2;s)(t)|^p_E\leq c_p\,e^{\d p (t-s)}\, \mathbb{E}\,|\psi_\la(\La_\la(u_1;s)-\La_\la(u_2;s);s)(t)|_E}\\
\vs
\ds{+
c_p\,e^{\d p (t-s)}\,\mathbb{E}\,|\Gamma_\la(u_1;s)(t)-\Gamma_\la(u_2;s)(t)|_E^p}\\
\vs
\ds{\leq c_p(\la-\d)\int_0^{t-s}e^{-(\la-\d)\rho}\,d\rho \sup_{\rho \in\,[s,t]}e^{\d p (\rho-s)}\,\mathbb{E}\,|\La_\la(u_1;s)(\rho)-\La_\la(u_2;s)(\rho)|^p_E}\\
\vs
\ds{+c_p\,e^{\d p (t-s)}\,\mathbb{E}\,|\Gamma_\la(u_1;s)(t)-\Gamma_\la(u_2;s)(t)|_E^p.}
\end{array}\]
Therefore, thanks to \eqref{c-lambda}, we can find $\la(\d)>\d$ large enough such that for any $\la\geq \la(\d)$
\[\begin{array}{l}
\ds{\sup_{\rho \in\,[s,t]}e^{\d p (\rho-s)}\,\mathbb{E}\,|\La_\la(u_1;s)(\rho)-\La_\la(u_2;s)(\rho)|^p_E}\\
\vs
\ds{\leq 
c_p\,\sup_{\rho \in\,[s,t]}\,e^{\d p (\rho-s)}\,\mathbb{E}\,|\Gamma_\la(u_1;s)(t)-\Gamma_\la(u_2;s)(\rho)|_E^p.}
\end{array}\]
Due to \eqref{new15bis}, this yields
\begin{equation}
\label{new15tris}
\begin{array}{l}
\ds{
\sup_{r \in\,[s,t]} e^{\d p(r-s)}\E\,|\La_\la(u_1;s)(r)-\La_\la(u_2;s)(r)|_E^p}\\
\vs
\ds{\leq c_{p,1}\frac{L_{g_2}^p}{(\la-\d)^{c_{p,2}}}\sup_{r \in\,[s,t]} e^{\d p(r-s)}\,\E\,|u_1(r)-u_2(r)|_E^p.}
\end{array}
\end{equation}

In the same way we get that
\begin{equation}
\label{new16bis}
\sup_{r \in\,[s,t]} e^{\d p(r-s)}\E\,|\La_\la(u;s)(r)|_E^p\leq c_{p,1}\frac{M_{g_2}^p}{(\la-\d)^{c_{p,2}}}\sup_{r \in\,[s,t]} e^{\d p(r-s)}\,\le(1+\E\,|u(r)|_E^{\frac{p}{m_2}}\r).
\end{equation}

\begin{Proposition}
\label{prop1}
Assume Hypotheses \ref{H0}, \ref{H1}, \ref{H2bis} and  \ref{H3}. Then, there exists $\d>0$  such that for any $x, y \in\,E$ and $p\geq 1$
\begin{equation}
\label{stima8}
 \E\,|v^{x}(t;s,y)|^p_E\leq c_{p}\,\le(1+e^{-\d p
(t-s)}\,|y|_E^p+|x|_E^{p}\r),\ \ \ \ \ s<t.
\end{equation}
\end{Proposition}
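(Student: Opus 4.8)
The plan is to derive the a priori bound \eqref{stima8} from the mild formulation of \eqref{fast}, splitting the solution into a linear stochastic convolution part and a "deterministic-looking" part governed by the monotone drift, and then closing a Gronwall-type estimate in the exponentially weighted norm $e^{\d p(t-s)}\,\E|\cdot|_E^p$. Concretely, I would fix $\la>\a$ large (to be chosen) and rewrite \eqref{fast} with the shifted drift $B_{2,\la}(t,x,y)=B_2(t,x,y)+(\la-\a)y$, so that
\[
v^x(t;s,y)=U_\la(t,s)y+\psi_\la(v^x;s)(t)+\int_s^t U_\la(t,r)B_{2,\la}(r,x,v^x(r))\,dr+\La_\la(v^x;s)(t),
\]
where $\La_\la(v^x;s)$ is the mild solution of the linear equation \eqref{linear} driven by $G_2(t,v^x(t))$. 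The idea is to set $\rho^x(t):=v^x(t;s,y)-\La_\la(v^x;s)(t)$, which then solves (pathwise, in the mild sense) a random non-autonomous PDE
\[
\frac{d\rho^x}{dt}(t)=(A_2(t)-\la)\rho^x(t)+B_{2,\la}(t,x,\rho^x(t)+\La_\la(v^x;s)(t)),\qquad \rho^x(s)=y,
\]
with no noise, so that the subdifferential inequality \eqref{dersub} can be applied to $|\rho^x(t)|_E$.

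Next I would estimate $\frac{d^-}{dt}|\rho^x(t)|_E$ by pairing the equation with $\d\in\M_{\rho^x(t)}$. The term $\langle A_2(t)\rho^x(t),\d\rangle_E$ is handled via the dissipativity of the realization of $A_2(t)=\gamma(t)A_2+L(t)$ in $E$ together with \eqref{gamma}: the principal part contributes a nonpositive (or at worst $\le 0$ after absorbing the first-order part $L(t)$ into $\la$) term; the term $-\la\langle\rho^x(t),\d\rangle_E=-\la|\rho^x(t)|_E$ gives the crucial exponential decay. For the nonlinear term I would use the one-sided growth bound: writing $B_{2,\la}(r,x,\rho^x+\La_\la)=\big(B_{2,\la}(r,x,\rho^x+\La_\la)-B_{2,\la}(r,x,\La_\la)\big)+B_{2,\la}(r,x,\La_\la)$, the first bracket is controlled by \eqref{5.5E} (applied with $h=\rho^x$, together with the fact that the $(\la-\a)y$ shift is genuinely dissipative so contributes $(\la-\a)|\rho^x|_E$, which must be re-absorbed — hence one does not simply shift by $\la$ but keeps $\a$; more carefully, one uses \eqref{nl101bis} and \eqref{cl42} directly on $B_2$ so the shift term is the only growth source), and the second is bounded by $c(1+|x|_E+|\La_\la(v^x;s)(r)|_E^{m_2})$ via \eqref{5.4}. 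This yields a differential inequality of the form
\[
\frac{d^-}{dt}|\rho^x(t)|_E\le -\d_0|\rho^x(t)|_E+c\big(1+|x|_E+|\La_\la(v^x;s)(t)|_E^{m_2}\big)
\]
for some $\d_0>0$ (after choosing $\la$ large enough to dominate the first-order coefficient of $L(t)$ and the linear-in-$\rho^x$ growth constant from Hypothesis \ref{H2bis}). Integrating, one gets
\[
|\rho^x(t)|_E\le e^{-\d_0(t-s)}|y|_E+c\int_s^t e^{-\d_0(t-r)}\big(1+|x|_E+|\La_\la(v^x;s)(r)|_E^{m_2}\big)\,dr.
\]

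Then I would raise to the $p$-th power, take expectations, multiply by $e^{\d p(t-s)}$ with $\d<\d_0$, and use the bound \eqref{new16bis} on the stochastic convolution $\La_\la(v^x;s)$ — which involves $\E|v^x(r)|_E^{p/m_2}$, hence (since $p/m_2\le p$ and by Jensen) is sublinear in $\sup_r e^{\d p(r-s)}\E|v^x(r)|_E^p$ when $m_2>1$, and for $m_2=1$ contributes a factor $M_{g_2}^p/(\la-\d)^{c_{p,2}}$ that can be made arbitrarily small by taking $\la$ large. Combining with $v^x=\rho^x+\La_\la(v^x;s)$ and absorbing the small coefficient, one closes the estimate for $\Phi(t):=\sup_{r\in[s,t]}e^{\d p(r-s)}\E|v^x(r)|_E^p$, obtaining $\Phi(t)\le c_p(1+e^{\d p(t-s)}+|y|_E^p e^{\d p(t-s)}\cdot e^{-\d p(t-s)}\cdots)$ — more precisely $\Phi(t)\le c_p(1+|y|_E^p+e^{\d p(t-s)}(1+|x|_E^p))$, which after dividing by $e^{\d p(t-s)}$ is exactly \eqref{stima8}. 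I expect the main obstacle to be the bookkeeping with the exponent $m_2$: if $m_2>1$ the convolution term is genuinely superlinear in $|v^x|_E$ (it appears with power $p$ but evaluated on $|v^x|^{m_2}$, i.e. effectively power $pm_2$ inside), so one cannot close a linear Gronwall argument directly on $\E|v^x|_E^p$. The resolution — as in \cite{cerrai1} — is that \eqref{new16} only ever produces $\E|u|_E^{p/m_2}$, so when applied with $u=v^x$ it gives $\E|v^x|_E^{p/m_2}$, a \emph{sublinear} quantity; this is the delicate structural point and the reason Hypothesis \ref{H3} imposes the reciprocal growth exponents $1/m_1$, $1/m_2$ on $g_1$, $g_2$. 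A secondary technical point is justifying the pathwise subdifferential computation rigorously (one works with Yosida/spectral approximations $A_2^n(t)$ of $A_2(t)$, derives the inequality for the regularized solutions, and passes to the limit), exactly as in the time-independent arguments of \cite{cerrai1}, which apply here because all constants in Hypotheses \ref{H0}--\ref{H3} are uniform in $t\in\reals$.
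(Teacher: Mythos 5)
Your argument follows the paper's proof essentially step for step: the paper likewise sets $z_\la=v^x-\La_\la(v^x;s)$ with $\La_\la$ the solution of \eqref{linear}, applies the subdifferential inequality \eqref{dersub} together with the monotonicity of $B_2$ in the fast variable and the growth bound \eqref{5.4} to get $\tfrac{d^-}{dt}|z_\la(t)|_E\le -\a|z_\la(t)|_E+c\le(1+|x|_E+|\La_\la(t)|_E^{m_2}\r)+(\la-\a)^{\frac{m_2}{m_2-1}}$, integrates, raises to the power $p$, and closes with \eqref{new16bis} exactly as you describe (exploiting the $p/m_2$ sublinearity, with largeness of $\la$ only needed to shrink the convolution coefficient). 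The only slips are bookkeeping ones: the one-sided estimate you need for increments in the fast variable is \eqref{5.5E2} (i.e.\ \eqref{nl102}), not \eqref{5.5E} or \eqref{cl42}, and the net decay rate is just $\a$, since the $(\la-\a)z_\la$ shift exactly cancels the extra decay gained from $-\la$ — neither point affects the validity of your argument.
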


\begin{proof}
We set $z_\la(t):=v^{x}(t;s,y)-\La_\la(t)$, where $\La_\la(t)=\La_\la(v^{x}(\cdot;s,y);s)(t)$ is the solution of  problem \eqref{linear}, with $u=v^{x}(\cdot;s,y)$ and $\la>\a$. Thanks to \eqref{5.5E2}, for every $\d \in\,\mathcal{M}_{z_\la(t)}$  we have
\[\begin{array}{l}
\ds{\frac{d}{dt}^{-}|z_\la(t)|_E\leq \le<(A_2(t)-\la)z_\la(t),\d\r>_E}\\
\vs
\ds{+\le<B_{2,\la}(t,x,z_\la(t)+\La_\la(t))-B_{2,\la}(t,x,\La_\la(t)),\d\r>_E}\\
\vs
\ds{+\le<B_{2,\la}(t,x,\La_\la(t)),\d\r>_E\leq -\a\,|z_\la(t)|_E+c\,\le(1+|x|_E+|\La_\la(t)|_E^{m_2}\r)+(\la-\a)\,|\La_\la(t)|_E}\\
\vs
\ds{\leq -\a\,|z_\la(t)|_E+c\,\le(1+|x|_E+|\La_\la(t)|_E^{m_2}\r)+(\la-\a)^{\frac{m_2}{m_2-1}},}
\end{array}\]
last estimate following from the Young inequality.
By comparison we get
\[|z_\la(t)|_E\leq e^{-\a (t-s)}|y|_E+c\le(1+|x|_E+(\la-\a)^{\frac{m_2}{m_2-1}}\r)+c\int_s^t
e^{-\a (t-r)}|\La_\la(r)|_E^{m_2}\,dr,\]
so that for any $p\geq 1$ 
\[\begin{array}{l}
\ds{|v^{x}(t;s,y)|_E^p\leq c_{p} |\La_\la(t)|_E^p+c_{p}\,e^{-\a p (t-s)}|y|_E^p}\\
\vs
\ds{+c_{p}\le(1+|x|_E^{p}+(\la-\a)^{\frac{p m_2}{m_2-1}}\r)+c_{p}\le(\int_s^t
e^{-\a (t-r)}|\La_\la(r)|_E^{m_2}\,dr\r)^p.}
\end{array}\]
Due to \eqref{new16bis}, this implies that we can proceed as in the proof of \cite[Proposition 4.1]{pol} (where \eqref{new16} with $s=0$ is used), and \eqref{stima8} follows.

\end{proof}

The following Proposition gives a generalization to the case of multiplicative noise of \cite[Lemma 2.2]{dpr}. The fact that the diffusion coefficient is not constant makes the proof of the result considerably more complicated, compared to \cite[Lemma 2.2]{dpr}.

\begin{Proposition} 
\label{prop2}
Under Hypotheses \ref{H0}, \ref{H1}, \ref{H2bis} and  \ref{H3}, if  $\a>0$ is large enough and/or $L_{g_2}$ is small enough, for any $t \in\,\reals$ and $x \in\,E$ there exists $\eta^x(t) \in\,L^p(\Omega;E)$, for all $p\geq 1$, such that 
\begin{equation}
\label{limite}
\lim_{s\to-\infty}\E\,|v^x(t;s,y)-\eta^x(t)|_E^p=0,
\end{equation}
for any $y \in\,E$ and $t \in\,\mathbb{R}$. Moreover, for every $p\geq 1$ there exists some $\d_p>0$ such that
\begin{equation}
\label{stimaeta}
\E\,|v^x(t;s,y)-\eta^x(t)|_E^p\leq c_p\,e^{-\d_p(t-s)}\le(1+|x|_E^p+|y|_E^p\r).
\end{equation}
Finally, $\eta^x$ is a mild solution in $\mathbb{R}$ of equation \eqref{fast-R}.

\end{Proposition}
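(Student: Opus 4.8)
The plan is to prove \eqref{limite} by a Cauchy argument in $s$, exactly in the spirit of \cite[Lemma 2.2]{dpr}, exploiting the exponential dissipativity already encoded in Proposition \ref{prop1}. First I would fix $x \in E$, $t \in \reals$ and $y \in E$, and consider two starting times $s_1 < s_2 < t$. By uniqueness of the mild solution, for $r \geq s_2$ the process $v^x(r;s_1,y)$ coincides with $v^x(r;s_2, v^x(s_2;s_1,y))$, so that
\[
v^x(t;s_1,y) - v^x(t;s_2,y) = v^x(t;s_2,z_1) - v^x(t;s_2,z_2),
\]
with $z_1 = v^x(s_2;s_1,y)$ and $z_2 = y$. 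Hence the problem reduces to controlling the dependence of $v^x(t;s_2,\cdot)$ on its initial datum, together with an a priori bound on $|z_1|_E$, which is furnished by \eqref{stima8}.

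The core estimate I would establish is a contraction-type inequality: there exists $\d_p > 0$ such that for any $s < t$ and $y_1, y_2 \in E$,
\begin{equation}
\label{plancontr}
\E\,|v^x(t;s,y_1) - v^x(t;s,y_2)|_E^p \leq c_p\, e^{-\d_p (t-s)}\le(1 + |x|_E^p + |y_1|_E^p + |y_2|_E^p\r).
\end{equation}
To get this I would split $v^x(t;s,y_i) = z_{\la,i}(t) + \La_\la(v^x(\cdot;s,y_i);s)(t)$ as in the proof of Proposition \ref{prop1}, write the equation for the difference $z_{\la,1}-z_{\la,2}$, and use the subdifferential inequality \eqref{dersub} together with the dissipativity of $A_2(t)-\la$ (giving a factor $-\a$) and the one-sided Lipschitz bound \eqref{nl102}/\eqref{5.5E2} for $B_2$ in the fast variable, which contributes nothing of the wrong sign. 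This yields $\frac{d}{dt}^-|z_{\la,1}(t)-z_{\la,2}(t)|_E \leq -\a|z_{\la,1}(t)-z_{\la,2}(t)|_E + (\text{error involving }|\La_{\la,1}-\La_{\la,2}|_E)$, and after taking $p$-th powers and expectations, Gronwall/comparison plus \eqref{new15tris} (which is small provided $\a$ is large or $L_{g_2}$ small, via \eqref{c-lambda}) closes the loop and gives \eqref{plancontr}. Combining \eqref{plancontr} with the bound $\E|z_1|_E^p \leq c_p(1+|x|_E^p+|y|_E^p)$ from \eqref{stima8} shows that $\{v^x(t;s,y)\}_{s}$ is Cauchy in $L^p(\Omega;E)$ as $s\to-\infty$, uniformly enough to define the limit $\eta^x(t)$, and passing to the limit $s_1 \to -\infty$ in \eqref{plancontr} with $y_1 = v^x(s_2;s_1,y)$ reabsorbed gives \eqref{stimaeta}. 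The limit is independent of $y$ by the same contraction estimate applied to two initial data.

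For the final assertion, that $\eta^x$ is a mild solution of \eqref{fast-R} on all of $\reals$, I would fix $s < t$ and write the mild formulation for $v^x(t;s_0,y)$ with $s_0 < s$:
\[
v^x(t;s_0,y) = U_\a(t,s) v^x(s;s_0,y) + \psi_\a(v^x(\cdot;s_0,y);s)(t) + \int_s^t U_\a(t,r) B_2(r,x,v^x(r;s_0,y))\,dr + \int_s^t U_\a(t,r) G_2(r,v^x(r;s_0,y))\,d\bar w^{Q_2}(r),
\]
and let $s_0 \to -\infty$. On the left $v^x(t;s_0,y) \to \eta^x(t)$ in $L^p(\Omega;E)$; in each term on the right the integrand converges to the corresponding expression in $\eta^x$, using \eqref{limite}, the continuity and polynomial growth of $B_2$ (via \eqref{5.4} and local Lipschitzianity, controlling moments by Proposition \ref{prop1}), the Lipschitz continuity of $G_2$ and $\psi_\a$ together with the Itô isometry for the stochastic term, and the boundedness of $U_\a(t,r)$. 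This identifies $\eta^x$ as the mild solution in $\reals$; one also checks continuity of $t \mapsto \eta^x(t)$ by letting $s_0 \to -\infty$ in the path continuity estimate of $v^x(\cdot;s_0,y)$.

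I expect the main obstacle to be the stochastic convolution term in establishing the contraction \eqref{plancontr}: unlike in \cite[Lemma 2.2]{dpr}, the noise is multiplicative, so $\La_\la$ depends on $v^x$ itself and one cannot simply cancel a common noise term. The resolution is precisely the fixed-point-with-exponential-weight machinery already set up in \eqref{new15bis}–\eqref{new16bis}: choosing $\la = \la(\d)$ large makes the map $u \mapsto \La_\la(u;s)$ a strict contraction in the weighted norm $\sup_r e^{\d p(r-s)}\E|\cdot|_E^p$ with a small constant, so the feedback of the difference of noise terms into $z_{\la,1}-z_{\la,2}$ is absorbed. A secondary technical point is justifying the limit in the stochastic integral as $s_0 \to -\infty$ over an unbounded interval of integration $(s_0,t)$ is not needed — one only passes to the limit on the \emph{fixed} interval $[s,t]$ after using the evolution (cocycle) property, which sidesteps any issue with integrability near $-\infty$.
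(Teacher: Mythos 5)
Your overall architecture (cocycle reduction to a contraction-in-the-initial-datum estimate, then a Cauchy argument, then passage to the limit in the mild formulation on a fixed interval $[s,t]$) matches the paper, and the last two steps are fine. The gap is in the core estimate \eqref{plancontr}. Writing $v_i=z_{\la,i}+\La_{\la,i}$ and taking $\d \in\,\mathcal{M}_{z_{\la,1}(t)-z_{\la,2}(t)}$, you cannot invoke \eqref{nl102}/\eqref{5.5E2}: those give a sign only when the functional $\d$ is a subdifferential element at the increment of the \emph{fast variable itself}, i.e.\ at $v_1-v_2=(z_{\la,1}-z_{\la,2})+(\La_{\la,1}-\La_{\la,2})$, not at $z_{\la,1}-z_{\la,2}$. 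After the unavoidable add-and-subtract, the leftover is $\le<J(t)\le(\La_{\la,1}(t)-\La_{\la,2}(t)\r),\d\r>_E$, where $J(t)=\la(t,\cdot,x,v_1(t),v_2(t))$ is the random potential from \eqref{nl102}; it is nonnegative but \emph{unbounded} (it is essentially the local Lipschitz constant of the polynomial nonlinearity along the two trajectories, and Hypothesis \ref{H2bis} gives no uniform bound on it). So the differential inequality you claim, $\frac{d}{dt}^-|z_{\la,1}-z_{\la,2}|_E\leq -\a|z_{\la,1}-z_{\la,2}|_E+C\,|\La_{\la,1}-\La_{\la,2}|_E$ with a deterministic $C$, is not available, and feeding the true error term (random, with only polynomial moment bounds in $|x|_E,|y_i|_E$) into the weighted fixed point \eqref{new15tris} does not close: the smallness condition would depend on $x$ and $y$, whereas $\a$ and $L_{g_2}$ are fixed once and for all, and the decay rate in \eqref{stimaeta} must be uniform.

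This is exactly the difficulty the paper flags when it says the multiplicative noise makes the result "considerably more complicated" than \cite[Lemma 2.2]{dpr}, and its resolution is not the weighted-norm machinery alone but a comparison (monotonicity) argument. The paper writes the difference $\rho(t)=v^x(t;s,y)-v^x(t;s-h,y)$ as the solution of a \emph{linear} equation with coefficients $-J^x(t)\geq 0$ in the drift and a bounded multiplicative coefficient $K^x(t)$ in the noise, introduces the auxiliary equation \emph{without} $J^x$ (for which the weighted fixed-point argument, with $\a$ large or $L_{g_2}$ small, gives exponential decay of all moments), and then uses a comparison principle (à la \cite{donate}): for nonnegative initial data, $0\leq \hat z\leq z$, so the unbounded potential $J^x$ is discarded by sign rather than estimated; general data are handled by splitting through $y\wedge v^x(s;s-h,y)$ and using linearity. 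Without this (or an equivalent device), your contraction \eqref{plancontr} — and hence \eqref{limite} and \eqref{stimaeta} — does not follow from the estimates you cite.
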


\begin{proof}
If we fix $h>0$ and define
\[\rho(t)=v^x(t;s,y)-v^x(t;s-h,y),\ \ \ t>s,\]
we have that $\rho(t)$ is the unique mild solution of the problem
\begin{equation}
\label{difference}
\le\{\begin{array}{l}
\ds{d\rho(t)=\le[(A_2(t)-\a)\rho(t)+B_2(t,x,v^x(t;s,y))-B_2(t,x,v^x(t;s-h,y))\r]\,dt}\\
\vs{\ \ \ \ \ \ \ \ \ +\le[G_2(t,v^x(t;s,y))-G_2(t,v^x(t;s-h,y))\r]\,d\bar{w}^{Q_2}(t),}\\
\vs
\ds{\rho(s)=y-v^x(s;s-h,y).}
\end{array}\r.\end{equation}

According to \eqref{nl102}, we have
\[B_2(t,x,v^x(t;s,y))-B_2(t,x,v^x(t;s-h,y))=-J^x(t)\rho(t),\]
where
\[J^x(t,\xi)=\la(t,\xi,x(\xi),v^x(t;s,y)(\xi),v^x(t;s-h,y)(\xi)),\ \ \ \ \xi \in\,D.\]
Therefore, if we define
\[K^x(t,\xi)=\frac{g_2(t,\xi,v^x(t;s,y)(\xi))-g_2(t,\xi,v^x(t;s-h,y)(\xi))}{\rho(t)(\xi)},\ \ \ \xi \in\,D,\]
we can rewrite equation \eqref{difference} as
\begin{equation}
\label{diffbis}
\le\{\begin{array}{l}
\ds{d\rho(t)=\le[(A_2(t)-\a)\rho(t)-J^x(t)\rho(t)\r]\,dt+K^x(t)\rho(t)\,d\bar{w}^{Q_2}(t),}\\
\vs
\ds{\rho(s)=y-v^x(s;s-h,y).}
\end{array}\r.\end{equation}
Notice that, due to \eqref{nl102}, we have 
\begin{equation}
\label{sign}
J^x(t,\xi)\geq 0, \ \ \ (t,\xi) \in\,\reals\times D.
\end{equation}
 Moreover, as $g_2(t,\xi,\cdot)$ is assumed
to be Lipschitz continuous, uniformly with respect to $(t,\xi) \in\,\reals\times \bar{D}$, we have that 
\begin{equation}
\label{cl53}
\sup_{(t,\xi) \in\,\reals\times \bar{D}}|K^x(t,\xi)|=\sup_{(t,\xi) \in\,\reals\times \bar{D}}[g_2(t,\xi,\cdot)]_{\text{\tiny{Lip}}}<\infty.\end{equation}

Now, for any ${\cal F}_s$-measurable $y_s \in\,L^2(\Omega;E)$, we introduce the auxiliary problem
\begin{equation}
\label{difftris}
\le\{\begin{array}{l}
\ds{dz(t)=(A_2(t)-\a)z(t)\,dt+K^x(t)z(t)\,d\bar{w}^{Q_2}(t),}\\
\vs
\ds{z(s)=y_s,}
\end{array}\r.\end{equation}
and we denote by $z(t;s,y_s)$ its solution.
By proceeding as in the proof of \eqref{new15tris}, we have that for any $p$ large enough there exist two constants $c_{p,1}$ and $c_{p,2}$ such that for any $0<\d<\a$
\[\sup_{r \in\,[s,t]} e^{\d p(r-s)}\E\,|z(r;s,y_s)|_E^p\leq c_p\,\E\,|y_s|_E^p+c_{p,1}\frac{L_{g_2}^p}{(\a-\d)^{c_{p,2}}}\sup_{r \in\,[s,t]} e^{\d p(r-s)}\,\E\,|z(r;s,y_s)|_E^p.\]
Therefore, if we pick $\a>0$ large enough and/or $L_{g_2}$ small enough  so that
\[c_{p,1}\frac{L_{g_2}^p}{\a^{c_{p,2}}}<1,\]
we can find $0<\bar{\d}_p<\a$ such that 
\[c_{p,1}\frac{L_{g_2}^p}{(\a-\bar{\d}_p)^{c_{p,2}}}<1.\]
This implies that
\[\sup_{r \in\,[s,t]} e^{p \bar{\d}_p(r-s)}\E\,|z(r;s,y_s)|_E^p\leq c_p\, \E\,|y_s|_E^p,\]
so that 
\begin{equation}
\label{cl1}
\E\,|z(r;s,y_s)|_E^p\leq c_p\,e^{-\d_p (r-s)}\, \E\,|y_s|_E^p,\ \ \ \ s<r,
\end{equation}
with $\d_p=p \bar{\d}_p$.

Next, for any ${\cal F}_s$-measurable $y_s \in\,L^2(\Omega;E)$ we introduce the problem
\begin{equation}
\label{difftris}
\le\{\begin{array}{l}
\ds{dz(t)=\le[(A_2(t)-\a)z(t)-J^x(t)z(t)\r]\,dt+K^x(t)z(t)\,d\bar{w}^{Q_2}(t),}\\
\vs
\ds{z(s)=y_s,}
\end{array}\r.\end{equation}
and we denote by $\hat{z}(t;s,y_s)$ its solution.

Due to the linearity of \eqref{difftris}, by a comparison argument (see \cite{donate}) we have
\[y_s\geq 0,\ \ \ \Pro-\text{a.s.} \Longrightarrow  \hat{z}(t;s,y_s)\geq 0,\ \ \ s<t,\ \ \ \Pro-\text{a.s.}\]
Moreover,  in view of   the sign condition \eqref{sign}, again by a comparison argument (see \cite{donate}) we have
\begin{equation}
\label{new29}
y_s\geq 0,\ \ \ \Pro-\text{a.s.} \Longrightarrow  0\leq\hat{z}(t;s,y_s)\leq z(t;s,y_s),\ \ \ s<t,\ \ \ \Pro-\text{a.s.}
\end{equation}
Thanks to \eqref{cl1}, this allows to conclude
\begin{equation}
\label{cl2}
y_s\geq 0,\ \ \ \Pro-\text{a.s.} \Longrightarrow \E\,|\hat{z}(t;s,y_s)|_E^p\leq c_p\,e^{-\d_p ( t-s)}|y_s|_E^2,\ \ \ \ s<t.\end{equation}
Now, as a consequence of  the linearity of problem \eqref{difftris}, we have
\[\begin{array}{l}
\ds{v^x(t;s,y)-v^x(t;s-h,y)=\hat{z}(t;s,y-v^x(s;s-h,y))}\\
\vs
\ds{=\hat{z}(t;s,y-v^x(s;s-h,y)\wedge y)-\hat{z}(t;s,v^x(s;s-h,y)-v^x(s;s-h,y)\wedge y).}
\end{array}\]
Then, thanks to \eqref{stima8} and \eqref{cl2}, we can conclude that for some $\d_p>0$
\begin{equation}
\label{cl3}
\begin{array}{l}
\ds{\E\,|v^x(t;s,y)-v^x(t;s-h,y)|_E^p\leq c_p\,e^{-\d_p ( t-s)}\E\,|y-v^x(s;s-h,y)|_E^p}\\
\vs
\ds{\leq c_p\,e^{-\d_p ( t-s)}\le(|y|_E^p+e^{-\d_p h}|y|_E^p+|x|_E^p+1\r).}
\end{array}
\end{equation}

Therefore, if we take the limit as $s\to-\infty$, due to the completeness of $L^p(\Omega;E)$, this implies that  for any $t \in\,\reals$ and $x,y \in\,E$ there exists $\eta^x(t) \in\,L^p(\Omega;E)$ such that \eqref{limite} holds. Moreover, if we let $h\to \infty$, we obtain \eqref{stimaeta}. 

Next, in order to prove that $\eta^x(t)$ does not depend on $y \in\,E$, we take $y_1, y_2 \in\,E$ and consider the difference
\[\rho(t)=v^x(t;s,y_1)-v^x(t;s,y_2),\ \ \ t>s.\]
The same arguments, used above for the difference $v^x(t;s,y)-v^x(t;s-h,y)$, can be used here for $\rho(t)$ and we have
\[\E\,|v^x(t;s,y_1)-v^x(t;s,y_2)|_E^p\leq c_p\,e^{- \d_p (t-s)}\,|y_1-y_2|_E^p,\ \ \ \ s<t,\]
so that, by taking the limit above as $s\to-\infty$, we get that the limit $\eta^x(t)$ does not depend on the initial condition $y \in\,E$.

Finally, let us show that $\eta^x$ is the mild solution in $\mathbb{R}$ of equation \eqref{fast-R}.
For any $s<t$ and $h>0$ we have
\[\begin{array}{l}
\ds{v^x(t;s-h,0)=U_{\a}(t,s)v^x(s;s-h,0)+\psi_\a(v^x(\cdot;s-h,0);s)(t)}\\
\vs
\ds{+\int_s^t U_{\a}(t,r)B_2(r,x,v^x(r;s-h,0))\,dr+\int_s^t U_{\a}(t,r)G_2(r,v^x(r;s-h,0))\,d\bar{w}^{Q_2}(r).}
\end{array}\]
Due to \eqref{limite} we can take  the limit as $h$ goes to infinity in both sides and we get for any $s<t$
\begin{equation}
\label{cl15}
\begin{array}{l}
\ds{\eta^x(t)=U_{\a}(t,s)\eta^x(s)+\psi_\a(\eta^x;s)(t)}\\
\vs
\ds{+\int_s^t U_{\a}(t,r)B_2(r,x,\eta^x(r))\,dr+\int_s^t U_{\a}(t,r)G_2(r,\eta^x(r))\,d\bar{w}^{Q_2}.}
\end{array}\end{equation}
This means  that $\eta^x(t)$ is a mild solution in $\reals$ of equation \eqref{fast-R}.

\end{proof}

In what follows, for any $t \in\,\reals$ and $x \in\,E$, we shall denote by $\mu^x_t$ the law of the random variable $\eta^x(t)$. Our purpose here is to show that the family $\{\mu^x_t\}_{t \in\,\reals}$ defines an {\em evolution system of probability measures} on $E$ for equation \eqref{fast}, indexed by $t \in\,\reals$. This means that $\mu^x_t$ is a probability measure on $E$, for any $t \in\,\reals$, and
it holds
\begin{equation}
\label{cl4}
\int_E P^x_{s,t}\varphi(y)\,\mu^x_s(dy)=\int_E \varphi(y)\,\mu^x_t(dy),\ \ \ \ \ s<t,
\end{equation}
for every $\varphi \in\,C_b(E)$.

Notice that, due to \eqref{limite} and \eqref{stima8}, for any $p\geq 1$  we have 
\begin{equation}
\label{cl75}
\sup_{t \in\,\reals}\E\,|\eta^x(t)|_E^p \leq c_p\,\le(1+|x|_E^p\r),\ \ \ \ x \in\,E,\end{equation}
so that
\begin{equation}
\label{cl50}
\sup_{t \in\,\reals}\int_E|y|_E^p\,\mu^x_t(dy)\leq c_p\,\le(1+|x|_E^p\r).
\end{equation}

\begin{Proposition}
\label{evo}
Under Hypotheses \ref{H0}, \ref{H1}, \ref{H2bis} and  \ref{H3}, if  $\a>0$ is large enough and/or $L_{g_2}$ is small enough, 
for any fixed $x \in\,E$ the family of probability measures $\{\mu^x_t\}_{t \in\,\reals}$ introduced  above defines an evolution family  of measure for equation \eqref{fast} such that
\begin{equation}
\label{cl5}
\lim_{s\to-\infty}P^x_{s,t}\varphi(y)=\int_E \varphi(y)\,\mu^x_t(dy),
\end{equation}
for any $\varphi \in\,C_b(E)$. Moreover, if $\varphi \in\,C^1_b(E)$, we have
\begin{equation}
\label{cl60}
\le|P_{s,t}^x\varphi(y)-\int_E\varphi(z)\,\mu^x_t(dz)\r|\leq \|\varphi\|_{C^1_b(E)}\,e^{-\d_1 (t-s)}\le(1+|x|_E+|y|_E\r).
\end{equation}
Finally, if $\{\nu_t^x\}_{t \in\,\reals}$ is another evolution family of measures for \eqref{fast}, such that 
\begin{equation}
\label{cl7}
\sup_{t \in\,\reals}\int_E |y|_E\,\nu^x_t(dy)<\infty,
\end{equation}
then $\nu^x_t=\mu^x_t$, for all $t\in\,\reals$ and $x \in\,E$.
\end{Proposition}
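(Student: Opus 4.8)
The plan is to deduce all four assertions from Proposition \ref{prop2}, which already supplies the convergence \eqref{limite}, the exponential rate \eqref{stimaeta}, and the fact that $\eta^x$ is a mild solution of \eqref{fast-R} on all of $\reals$. Recall that, by definition, $\mu^x_t$ is the law of the random variable $\eta^x(t)\in\,L^p(\Omega;E)$, so it is automatically a Borel probability measure on $E$, with finite moments of every order thanks to \eqref{cl75}--\eqref{cl50}.

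First I would prove the evolution relation \eqref{cl4}. Fix $s<t$, $x\in\,E$ and $\varphi\in\,C_b(E)$. Since $\eta^x(s)$ is $\mathcal{F}_s$-measurable, it is independent of the increments of $\bar{w}^{Q_2}$ on $[s,t]$, so freezing the (random) initial datum in the definition of $P^x_{s,t}$ gives
\[\int_E P^x_{s,t}\varphi(y)\,\mu^x_s(dy)=\E\,\le[(P^x_{s,t}\varphi)(\eta^x(s))\r]=\E\,\varphi\le(v^x(t;s,\eta^x(s))\r).\]
Now both $v^x(\cdot;s,\eta^x(s))$ and the restriction of $\eta^x$ to $[s,t]$ solve the mild formulation of \eqref{fast} with the same initial value $\eta^x(s)$ at time $s$ (for $\eta^x$ this is exactly \eqref{cl15}); hence, by uniqueness of mild solutions on a finite interval, $v^x(t;s,\eta^x(s))=\eta^x(t)$ $\Pro$-a.s., and the right-hand side above equals $\E\,\varphi(\eta^x(t))=\int_E\varphi\,d\mu^x_t$. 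As $\varphi\in\,C_b(E)$ is arbitrary, this is precisely \eqref{cl4}.

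Next, \eqref{cl5} is immediate: by \eqref{limite} with $p=1$ we have $v^x(t;s,y)\to\eta^x(t)$ in $L^1(\Omega;E)$, hence in probability, as $s\to-\infty$, so for $\varphi\in\,C_b(E)$ dominated convergence yields $P^x_{s,t}\varphi(y)=\E\,\varphi(v^x(t;s,y))\to\int_E\varphi\,d\mu^x_t$. For \eqref{cl60}, note that a function $\varphi\in\,C^1_b(E)$ is Lipschitz with constant at most $\|\varphi\|_{C^1_b(E)}$, so
\[\le|P^x_{s,t}\varphi(y)-\int_E\varphi(z)\,\mu^x_t(dz)\r|=\le|\E\,\le[\varphi(v^x(t;s,y))-\varphi(\eta^x(t))\r]\r|\leq \|\varphi\|_{C^1_b(E)}\,\E\,|v^x(t;s,y)-\eta^x(t)|_E,\]
and \eqref{cl60} follows from \eqref{stimaeta} with $p=1$ (after renaming the constant). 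For uniqueness, let $\{\nu^x_t\}$ be another evolution family of measures satisfying \eqref{cl7}, and set $C:=\sup_r\int_E|y|_E\,\nu^x_r(dy)<\infty$. For $\varphi\in\,C^1_b(E)$ and $s<t$, the evolution property of $\nu^x$ (so that $\int_E\varphi\,d\nu^x_t=\int_E P^x_{s,t}\varphi\,d\nu^x_s$) together with \eqref{cl60} gives
\[\le|\int_E\varphi\,d\nu^x_t-\int_E\varphi\,d\mu^x_t\r|=\le|\int_E\le(P^x_{s,t}\varphi(y)-\int_E\varphi\,d\mu^x_t\r)\nu^x_s(dy)\r|\leq \|\varphi\|_{C^1_b(E)}\,e^{-\d_1(t-s)}\le(1+|x|_E+C\r),\]
and letting $s\to-\infty$ forces $\int_E\varphi\,d\nu^x_t=\int_E\varphi\,d\mu^x_t$ for every $\varphi\in\,C^1_b(E)$. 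Since $C^1_b(E)$ determines Borel probability measures on the separable Banach space $E$ (for instance via smooth cylindrical functionals $y\mapsto\psi(\ell_1(y),\dots,\ell_n(y))$ with $\ell_i\in\,E^\star$, $\psi\in\,C^\infty_c(\reals^n)$), we conclude $\nu^x_t=\mu^x_t$ for all $t$.

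The step that requires the most care — and which I regard as the main obstacle — is the freezing identity $\int_E P^x_{s,t}\varphi\,d\mu^x_s=\E\,\varphi(v^x(t;s,\eta^x(s)))$ together with the pathwise identification $v^x(t;s,\eta^x(s))=\eta^x(t)$: this uses the independence of $\eta^x(s)$ from the driving noise on $[s,t]$, the measurable (indeed continuous) dependence of the solution map $y\mapsto v^x(t;s,y)$ on the initial datum, and the uniqueness of mild solutions on finite intervals established via the proof of \cite[Theorem 5.3]{cerrai1}. Once that is in place, everything else is a routine consequence of Proposition \ref{prop2}.
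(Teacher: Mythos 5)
Your proposal is correct, and it reaches all four assertions, but for the evolution-family identity \eqref{cl4} you take a genuinely different route from the paper. The paper first proves \eqref{cl5} (exactly as you do, from \eqref{limite}), and then obtains \eqref{cl4} by letting $s\to-\infty$ in the Chapman--Kolmogorov relation $P^x_{s,r}P^x_{r,t}\varphi(y)=P^x_{s,t}\varphi(y)$ at a fixed $y$: the left-hand side tends to $\int_E P^x_{r,t}\varphi\,d\mu^x_r$ and the right-hand side to $\int_E\varphi\,d\mu^x_t$. This is a ``soft'' argument that never touches random initial data; its only hidden cost is that \eqref{cl5} must be applied to $P^x_{r,t}\varphi$, i.e.\ one implicitly uses that $P^x_{r,t}\varphi$ is again bounded and continuous (Feller property, which follows from continuous dependence on the initial datum). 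You instead realize $\mu^x_s$ as the law of $\eta^x(s)$ and prove \eqref{cl4} directly via the freezing identity $\int_E P^x_{s,t}\varphi\,d\mu^x_s=\E\,\varphi(v^x(t;s,\eta^x(s)))$ and the flow identification $v^x(t;s,\eta^x(s))=\eta^x(t)$, the latter resting on \eqref{cl15} and uniqueness of mild solutions. This is a legitimate alternative, but it is the heavier one: it needs $\eta^x(s)$ to be $\mathcal{F}_s$-measurable and independent of the noise increments on $[s,t]$ (true, as an $L^p$-limit of $\mathcal{F}_s$-measurable variables), measurability/continuity of $y\mapsto v^x(t;s,y)$ for the freezing step, and an extension of the well-posedness statement (formulated in the paper for deterministic $y\in E$) to $\mathcal{F}_s$-measurable initial data in $L^p(\Omega;E)$ --- points you correctly flag, and which should be spelled out if this route is adopted; in exchange you get \eqref{cl4} without invoking the Feller property or the limit \eqref{cl5}. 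The remaining parts of your argument --- \eqref{cl5} by bounded convergence, \eqref{cl60} from \eqref{stimaeta} with $p=1$, and uniqueness by integrating \eqref{cl60} against $\nu^x_s$, using \eqref{cl7} and letting $s\to-\infty$ --- coincide with the paper's proof, with the useful extra remark that $C^1_b(E)$ is measure-determining, a point the paper leaves implicit.
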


\begin{proof}
According to \eqref{limite}, for any $\varphi \in\,C_b(E)$ and $y \in\,E$ we have
\[\lim_{s\to-\infty} P^x_{s,t}\varphi(y)=\lim_{s\to-\infty}\E\,\varphi(v^x(t;s,y))=\E\varphi(\eta^x(t))=\int_E\varphi(y)\mu^x_t(dy).\]
Therefore, since for any $s<r<t$ we have 
\[P^x_{s,r} P^x_{r,t}\varphi(y)=P^x_{s,t}\varphi (y),\ \ \ \ y \in\,E,\]
by taking the limit above in both sides, as $s\to-\infty$, we obtain
\[\int_E P^x_{r,t}\varphi(y)\,\mu^x_r(dy)=\int_E\varphi(y)\,\mu^x_t(dy),\]
which means
 that $\{\mu^x_t\}_{t \in\,\reals}$ is an evolution family of measures,  satisfying \eqref{cl5}.
 
 In order to prove \eqref{cl60}, we have
 \[\begin{array}{l}
 \ds{\le|P_{s,t}^x\varphi(y)-\int_E\varphi(z)\,\mu^x_t(dz)\r|\leq \E\,\le|\varphi(v^x(t;s,y))-\varphi(\eta^x(t))\r|}\\
 \vs
 \ds{\leq \|\varphi\|_{C^1_b(E)}\E\,|v^x(t;s,y)-\eta^x(t)|_E,}
 \end{array}\]
 so that \eqref{cl60} follows from \eqref{stimaeta}.

Next, let us prove uniqueness.  If we show that for any $\varphi \in\,C^1_b(E)$
\begin{equation}
\label{cl6}
\lim_{s\to-\infty}\int_E P_{s,t}\varphi(y)\,\nu_s^x(dy)=\int_E\varphi(y)\,\mu^x_t(dt),
\end{equation}
then, recalling that $\{\nu^x_t\}_{t \in\,\reals}$ is an evolution family, we have that
for any $\varphi \in\,C^1_b(E)$
\[\int_E\varphi(y)\,\nu^x_t(dy)=\int_E\varphi(y)\,\mu^x_t(dy),\ \ \ \ t \in\,\reals,\]
which implies that $\mu^x_t=\nu^x_t$, for any $t \in\,\reals$ and $x \in\,E$.

In order to prove \eqref{cl6}, we notice that due to \eqref{stimaeta} 
\[\begin{array}{l}
\ds{\le|\int_E P_{s,t}\varphi(y)\,\nu_s^x(dy)-\int_E\varphi(y)\,\mu^x_t(dt)\r|\leq \int_E \E\le|\varphi(v^x(t;s,y))-\varphi(\eta^x(t))\r|\,\nu^x_s(dy)}\\
\vs
\ds{\leq \|\varphi\|_{C^1_b(E)}\int_E \E|v^x(t;s,y)-\eta^x(t)|_E\,\nu^s_s(dy)}\\
\vs
\ds{\leq \|\varphi\|_{C^1_b(E)}e^{- \d(t-s)}\le(1+|x|_E+\int_E |y|_E\,\nu^x_s(dy)\r).}
\end{array}\]
Then, as a consequence of condition \eqref{cl7}, we can conclude that \eqref{cl6} holds and, as we have seen, uniqueness follows.
\end{proof}

 Now, we want to study the dependence of $\eta^x(t)$, and hence of $\mu^x_t$, on the parameter $x \in\,E$.
 
 \begin{Proposition}
 \label{Prop3}
Under Hypotheses \ref{H0}, \ref{H1}, \ref{H2bis} and  \ref{H3}, if  $\a>0$ is large enough and/or $L_{g_2}$ is small enough, we have that for any $R>0$ there exists $c_R>0$ such that 
\begin{equation}
\label{cl76}
x_1, x_2 \in\,B_E(R)\Longrightarrow \sup_{t \in\,\reals }\E\,|\eta^{x_1}(t)-\eta^{x_2}(t)|_E^2\leq c_R\,|x_1-x_2|_E^2 .
\end{equation} 
 
 \end{Proposition}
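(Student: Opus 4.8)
The plan is to argue as in the proof of Proposition \ref{prop2}, working now with the difference
\[
\rho(t):=v^{x_1}(t;s,y)-v^{x_2}(t;s,y),\qquad t\geq s,
\]
of two solutions of \eqref{fast} with the same initial datum $y\in\,E$ and different frozen slow components $x_1,x_2\in\,B_E(R)$, and then to let $s\to-\infty$. By \eqref{limite} it is enough to prove that
\[
\E\,|v^{x_1}(t;s,y)-v^{x_2}(t;s,y)|_E^2\leq c_R\,|x_1-x_2|_E^2,\qquad s<t,
\]
with $c_R$ independent of $s$, $t$ and $y$; note that, since $\rho(s)=0$, this bound will be \emph{uniform} in $s$, with no exponentially decaying initial term, which is exactly what is needed to pass to the limit.

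First I would subtract the mild formulations of $v^{x_1}(\cdot;s,y)$ and $v^{x_2}(\cdot;s,y)$. Proceeding as in Proposition \ref{prop2}, using \eqref{nl102} to extract the dissipative part of the drift difference, \eqref{cl41} for the remaining part, and Hypothesis \ref{H3} for the noise, $\rho$ is the mild solution of
\[
d\rho(t)=\le[(A_2(t)-\a)\rho(t)-J^x(t)\rho(t)+\Theta^x(t)\r]dt+K^x(t)\rho(t)\,d\bar w^{Q_2}(t),\qquad \rho(s)=0,
\]
where $J^x(t,\xi):=\la(t,\xi,x_1(\xi),v^{x_1}(t;s,y)(\xi),v^{x_2}(t;s,y)(\xi))\geq 0$ and $K^x(t,\xi):=(g_2(t,\xi,v^{x_1}(t)(\xi))-g_2(t,\xi,v^{x_2}(t)(\xi)))/\rho(t)(\xi)$ are defined exactly as the corresponding quantities in Proposition \ref{prop2}, so that $\sup_{t,\xi}|K^x(t,\xi)|\leq L_{g_2}$ (here the assumption that $g_2$ does not depend on the slow variable is used), while the only genuinely new term is the drift
\[
\Theta^x(t)(\xi):=b_2(t,\xi,x_1(\xi),v^{x_2}(t)(\xi))-b_2(t,\xi,x_2(\xi),v^{x_2}(t)(\xi))=\theta(t,\xi,x_1(\xi),x_2(\xi),v^{x_2}(t)(\xi)).
\]
Since $|x_i|_E\leq R$ forces $x_1(\xi),x_2(\xi)\in\,B_\reals(R)$ for every $\xi$, \eqref{new60} gives the crucial uniform bound
\[
\sup_{t\in\,\reals}|\Theta^x(t)|_E\leq L_R\,|x_1-x_2|_E,\qquad \Pro-\text{a.s.}
\]

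Next, let $\hat U^x(t,r)\phi$ be the mild solution at time $t$ of the homogeneous equation obtained by deleting $\Theta^x$, with initial condition $z(r)=\phi$; in the notation of Proposition \ref{prop2}, $\hat U^x(t,r)\phi=\hat z(t;r,\phi)$. Since $\Theta^x$ enters only the drift and $\rho(s)=0$, the variation of constants formula (no It\^o correction arising, $\Theta^x$ being an adapted drift term) gives
\[
\rho(t)=\int_s^t\hat U^x(t,r)\,\Theta^x(r)\,dr.
\]
By \eqref{cl1}--\eqref{cl2}, which hold for $\mathcal F_r$-measurable data and — splitting $\Theta^x(r)$ into its positive and negative parts — extend to data of arbitrary sign, there are $c>0$ and $\d>0$ independent of $s<r<t$ with
\[
\E\,|\hat U^x(t,r)\Theta^x(r)|_E^2\leq c\,e^{-\d(t-r)}\,\E\,|\Theta^x(r)|_E^2\leq c\,L_R^2\,e^{-\d(t-r)}\,|x_1-x_2|_E^2 .
\]
Hence, by the triangle inequality in $L^2(\Omega)$,
\[
\le(\E\,|\rho(t)|_E^2\r)^{\frac 12}\leq \int_s^t\le(\E\,|\hat U^x(t,r)\Theta^x(r)|_E^2\r)^{\frac 12}dr\leq \sqrt c\,L_R\,|x_1-x_2|_E\int_s^t e^{-\frac\d2(t-r)}\,dr\leq \frac{2\sqrt c}{\d}\,L_R\,|x_1-x_2|_E,
\]
uniformly in $s<t$. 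Letting $s\to-\infty$ and invoking \eqref{limite} yields \eqref{cl76}, with $c_R=4c\,L_R^2/\d^2$.

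The main difficulty is the one already resolved in Proposition \ref{prop2}, and not a new one: the dissipative term $-J^x z$ is \emph{unbounded}, so it can only be exploited through a comparison argument (\cite{donate}) rather than a Gronwall estimate, and this is exactly where the standing smallness/largeness condition on $L_{g_2}/\a$ is needed (at the level of \eqref{cl1}--\eqref{cl2}); the new inhomogeneity $\Theta^x$ is harmless precisely because it is uniformly bounded by $L_R|x_1-x_2|_E$ and because $\rho(s)=0$. The only details to verify are the validity of the variation of constants formula and the applicability of \eqref{cl1}--\eqref{cl2} to random initial data, both of which are routine once one recalls that those estimates are already stated for $\mathcal F_s$-measurable data. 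If one prefers to stay entirely within the tools used for Proposition \ref{prop2}, one may instead compare $\rho$ with the solutions of the auxiliary equations having forcing $\pm L_R|x_1-x_2|_E$ (a constant function, read through the $L^\infty$ realization of $e^{\sigma A_2}$) in place of $\Theta^x$, obtaining $|\rho(t)|_E\leq|Q(t)|_E$ with $Q$ solving a linear equation with bounded coefficients, and then estimate $Q$ from its mild formulation by means of \eqref{psilambdaunif} and the bound underlying \eqref{new15bis}, absorbing the $\psi_\a$ and stochastic-convolution terms thanks to the choice of $\a$ and $L_{g_2}$.
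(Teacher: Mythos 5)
Your high-level strategy is the right one, and it shares the paper's key inputs: the decomposition of the drift difference into $-J^x\rho+\Theta^x$, the uniform bound $|\Theta^x(t)|_E\leq L_R|x_1-x_2|_E$ from \eqref{new60}, the boundedness of $K^x$ by $L_{g_2}$, and the exponential decay coming from the auxiliary estimates \eqref{cl1}--\eqref{cl2}. The gap is in the mechanism you use to put these together. The identity $\rho(t)=\int_s^t\hat U^x(t,r)\Theta^x(r)\,dr$ is a stochastic variation-of-constants formula for a mild-solution SPDE whose homogeneous part contains both the multiplicative noise $K^x\rho\,d\bar w^{Q_2}$ and the random potential $-J^x\rho$; in this setting it is not a quotable fact, and it is not routine. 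The standard justification (stochastic Fubini plus uniqueness of the mild solution) forces you to integrate $J^x$ against the flow, and Hypothesis \ref{H2bis} gives no quantitative control on $J^x$: the function $\la$ in \eqref{nl102} is merely measurable and nonnegative, its size is governed by the local Lipschitz constant of $b_2$ in the second variable, and no growth rate in the radius is assumed, so no moment bounds for $J^x$ are available; one would have to localize with stopping times and then reconcile that with the expectation-based decay estimate \eqref{cl2}. None of this is supplied, and it is exactly the work the paper avoids by never integrating $J^x$ at all, using only its sign through comparison arguments (citing \cite{donate}). Your fallback is also flawed as stated: comparing $\rho$ in one step with $Q^\pm$ solving the equation with bounded coefficients (i.e. without $-J^x$) and forcing $\pm L_R|x_1-x_2|_E$ would require the drift inequality $-J^x(t,\xi)u+\Theta^x(t,\xi)\leq L_R|x_1-x_2|_E$ for all values $u$ of the unknown, which fails for $u<0$; to drop $-J^x$ one must first know the sign of the process being dominated.

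That sign information is where the paper's proof genuinely differs, and it uses an ingredient you never invoke, namely condition \eqref{cl42} of Hypothesis \ref{H2bis}. The paper takes $y=0$, writes the drift difference as $-J(t)\rho(t)+I(t)$ with $I(t,\xi)=\theta(t,\xi,x_1(\xi),x_2(\xi),v^{x_2}(t;s,0)(\xi))$, reduces to ordered data $x_1\geq x_2$ (recovering the general case afterwards through $x_1\wedge x_2$), uses \eqref{cl42}--\eqref{cl43} to ensure $I\geq 0$ and hence, by comparison, $\rho\geq 0$; then, since $J\geq 0$ and $\rho\geq 0$, a second comparison gives $0\leq\rho\leq\hat\rho$, where $\hat\rho$ solves the equation with the same forcing $I$ but without the $-J$ term, and $\hat\rho$ is finally estimated from its mild formulation by the arguments of Proposition \ref{prop1}, using $\sup_{s<t}|I(t)|_E\leq L_R|x_1-x_2|_E$ and the standing condition that $\a$ is large and/or $L_{g_2}$ small. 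If you could rigorously justify your Duhamel representation (say, via localization), your route would actually be interesting, since it would dispense with \eqref{cl42} and the $x_1\wedge x_2$ reduction; as written, however, the two steps you label as routine are precisely the missing proof.
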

\begin{proof}
In view of \eqref{limite}, it is sufficient to show that for any $R>0$ there exists $c_R>0$ such that
\begin{equation}
\label{cl40}
x_1, x_2 \in\,B_E(R)\Longrightarrow \sup_{s<t}\E\,|v^{x_1}(t;s,0)-v^{x_2}(t;s,0)|_E^2\leq c_R\,|x_1-x_2|_E^2.
\end{equation}
If we define
\[\rho(t)=v^{x_1}(t;s,0)-v^{x_2}(t;s,0),\ \ \ s<t,\]
we have that $\rho(t)$ is the unique mild solution of the problem
\begin{equation}
\label{difference-bis}
\le\{\begin{array}{l}
\ds{d\rho(t)=\le[(A_2(t)-\a)\rho(t)+B_2(t,x_1,v^{x_1}(t;s,0))-B_2(t,x_2,v^{x_2}(t;s,0))\r]\,dt}\\
\vs{\ \ \ \ \ \ \ \ \ +\le[G_2(t,v^{x^1}(t;s,0))-G_2(t,v^{x_2}(t;s,0))\r]\,d\bar{w}^{Q_2}(t),}\\
\vs
\ds{\rho(s)=0.}
\end{array}\r.\end{equation}

According to \eqref{cl41} and \eqref{nl102}, we have
\[B_2(t,x_1,v^{x_1}(t;s,0))-B_2(t,x_2,v^{x_2}(t;s,0))=-J(t)\rho(t)+I(t),\]
where
\[J(t,\xi)=\la(t,\xi,x_1(\xi),v^{x_1}(t;s,0)(\xi),v^{x_2}(t;s,0)(\xi)),\ \ \ \ \xi \in\,D,\]
and 
\[I(t,\xi)=\theta(t,\xi,x_1(\xi),x_2(\xi),v^{x_2}(t;s,0)(\xi)),\ \ \  \xi \in\,D.\]
Therefore, if we define
\[K(t,\xi)=\frac{g_2(t,\xi,v^{x_1}(t;s,0)(\xi))-g_2(t,\xi,v^{x_2}(t;s,0)(\xi))}{\rho(t)},\ \ \ \xi \in\,D,\]
we can rewrite equation \eqref{difference-bis} as
\begin{equation}
\label{diffbis-bis}
\le\{\begin{array}{l}
\ds{d\rho(t)=\le[(A_2(t)-\a)\rho(t)-J(t)\rho(t)+I(t)\r]\,dt+K(t)\rho(t)\,d\bar{w}^{Q_2}(t),}\\
\vs
\ds{\rho(s)=0.}
\end{array}\r.\end{equation}
Notice that, due to \eqref{nl102}, we have 
\begin{equation}
\label{sign-bis}
J(t,\xi)\geq 0, \ \ \ (t,\xi) \in\,[s,+\infty)\times D.
\end{equation}
Due to \eqref{new60} we have
\begin{equation}
\label{cl44}
x_1, x_2 \in\,B_E(R)\Longrightarrow  \sup_{s<t}|I(t)|_E\leq L_R\,|x_1-x_2|_E.
\end{equation}
 Moreover, due to \eqref{cl42}, we can assume, without any loss of generality, that  
 \begin{equation}
 \label{cl43}
 x_1(\xi)\geq x_2(\xi)\Longrightarrow I(t,\xi)\geq 0,\ \ \ \  (t,\xi) \in\,[s,+\infty)\times \bar{D}.
 \end{equation}
 Finally, as $g_2(t,\xi,\cdot)$ is assumed
to be Lipschitz continuous, uniformly with respect to $(t,\xi) \in\,\reals\times \bar{D}$, we have that 
$K(t)$ satisfies \eqref{cl53}.

Thanks to \eqref{cl43}, by a comparison argument we have
\[x_1\geq x_2\Longrightarrow \rho(t)\geq 0,\ \ \ \Pro-\text{a.s.}, \ \ \ \ s<t.\]
Therefore, again by comparison, due to \eqref{sign-bis} we have
\begin{equation}
\label{cl46}
x_1\geq x_2\Longrightarrow 0\leq \rho(t)\leq \hat{\rho}(t),\ \ \ \Pro-\text{a.s.}, \ \ \ \ s<t,\end{equation}
where $\hat{\rho}(t)$ is the solution of the problem 
\[\le\{\begin{array}{l}
\ds{d\hat{\rho}(t)=\le[(A_2(t)-\a)\hat{\rho}(t)+I(t)\r]\,dt+K(t)\hat{\rho}(t)\,d\bar{w}^{Q_2}(t),}\\
\vs
\ds{\hat{\rho}(s)=0.}
\end{array}\r.\]
This means that
\[\hat{\rho}(t)=\psi_\a(\hat{\rho};s)(t)+\int_s^t U_{\a}(t,r)I(r)\,dr+\int_s^t U_\a(t,r)K(r)\hat{\rho}(r)\,d\bar{w}^{Q_2}(r).\]
As a consequence of \eqref{cl44}, by using the same arguments used in the proof of Proposition \ref{prop1}, we get that if $\a$ is large enough and/or $L_{g_2}$ is small enough
\[x_1\geq x_2,\ \ \ x_1, x_2 \in\,B_E(R)\ \Longrightarrow\   \sup_{s<t}\E\,|\hat{\rho}(t)|_E^2\leq c_R\,|x_1-x_2|_E^2,\]
so that, thanks to \eqref{cl46}, we have
\[x_1\geq x_2,\ \ \ x_1, x_2 \in\,B_E(R)\ \Longrightarrow\   \sup_{s<t}\E\,|v^{x_1}(t;s,0)-v^{x_2}(t;s,0)|_E^2\leq c_R\,|x_1-x_2|_E^2.\]
As in the proof of   Proposition \ref{prop2}, the general estimate \eqref{cl40} follows by noticing that 
 \[\begin{array}{l}
 \ds{|v^{x_1}(t;s,0)-v^{x_2}(t;s,0)|_E^2}\\
 \vs
 \ds{\leq 2\,|v^{x_1}(t;s,0)-v^{x_1\wedge x_2}(t;s,0)|_E^2+2\,|v^{x_1\wedge x_2}(t;s,0)-v^{x_2}(t;s,0)|_E^2.}
 \end{array}\]

\end{proof}

 \section{Almost periodicity of the evolution family of measures}

In what follows, we shall assume the following conditions on the operator $A_2(t)$ and the coefficients $b_2(t,\xi,\si)$ and $g_2(t,\xi,\si)$.

\begin{Hypothesis}
\label{H6}
\begin{enumerate}
\item The functions $\gamma:\reals\to (0,\infty)$ and $l:\reals\times \bar{D}\to \reals^d$ are both periodic, with the same period.
\item The families of functions 
\[{\mathcal B}_R:=\le\{b_2(\cdot,\xi,\si)\,:\,\xi \in\,\bar{D},\ \si \in\,B_{\reals^2}(R)\r\},\ \ \ \ {\mathcal G}_R:=\le\{g_2(\cdot,\xi,\si)\,:\,\xi \in\,\bar{D},\ \si \in\, B_{\reals}(R)\r\}\]
 are both uniformly almost periodic, for any $R>0$
\end{enumerate}
\end{Hypothesis}

\begin{Lemma}
\label{lemma6.1}
Under Hypothesis \ref{H6}, for any $R>0$ the family of functions
 \[\le\{B_2(\cdot,x,y)\,:\,(x,y) \in\,B_{E\times E}(R)\r\},\ \ \ \le\{G_2(\cdot,y)\,:\,y\in\,B_E(R)\r\},\]
 are both uniformly almost periodic.
 \end{Lemma}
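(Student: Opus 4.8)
The plan is to reduce the statement about the Nemytskii-type operators $B_2$ and $G_2$ on the function space $E=C(\bar D)$ to the hypotheses on the scalar functions $b_2$ and $g_2$, using the sequential characterization of uniform almost periodicity in Theorem \ref{teo3.3} (and its family version). First I would recall that, by the second characterization in that theorem, a family $\{f(\cdot,x)\}_{x\in F}$ is uniformly almost periodic if and only if from any sequence $\gamma'\subset\reals$ one can extract a subsequence $\gamma\subset\gamma'$ such that $T_\gamma f$ exists uniformly on $\reals\times F$. So, fixing $R>0$, I take an arbitrary sequence $\gamma'$; applying the hypothesis that ${\mathcal B}_R=\{b_2(\cdot,\xi,\si):\xi\in\bar D,\ \si\in B_{\reals^2}(R)\}$ is uniformly almost periodic, I extract $\gamma\subset\gamma'$ along which $b_2(t+\gamma_n,\xi,\si)$ converges uniformly in $(\xi,\si)\in\bar D\times B_{\reals^2}(R)$ to some limit $\tilde b_2(t,\xi,\si)$.

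The key step is then to transfer this uniform scalar convergence to convergence in $\mathcal{L}(E)$-norm (for $G_2$) and in the sup-norm of $E$ (for $B_2$), uniformly over the relevant ball. For $B_2$: if $(x,y)\in B_{E\times E}(R)$ then $(x(\xi),y(\xi))\in B_{\reals^2}(\sqrt2 R)$ for every $\xi$, so
\[
\bigl|B_2(t+\gamma_n,x,y)(\xi)-\tilde B_2(t,x,y)(\xi)\bigr|=\bigl|b_2(t+\gamma_n,\xi,x(\xi),y(\xi))-\tilde b_2(t,\xi,x(\xi),y(\xi))\bigr|,
\]
and taking the sup over $\xi$ this is bounded by $\sup_{\xi,\si\in B_{\reals^2}(\sqrt2 R)}|b_2(t+\gamma_n,\xi,\si)-\tilde b_2(t,\xi,\si)|$, which tends to $0$ independently of $(x,y)$. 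Here I should check that $\tilde B_2(t,x,y)(\xi):=\tilde b_2(t,\xi,x(\xi),y(\xi))$ is genuinely a continuous function of $\xi$, i.e.\ lands in $E$: this follows because the convergence $b_2(t+\gamma_n,\cdot)\to\tilde b_2(t,\cdot)$ is uniform on $\bar D\times B_{\reals^2}(\sqrt2 R)$ and each $b_2(t+\gamma_n,\cdot,\cdot)$ is continuous, so the limit is continuous, hence $\xi\mapsto\tilde b_2(t,\xi,x(\xi),y(\xi))$ is continuous as a composition. The argument for $G_2$ is the same, except that one estimates the operator norm: for $z\in E$ with $|z|_E\le1$,
\[
\bigl|\bigl(G_2(t+\gamma_n,y)-\tilde G_2(t,y)\bigr)z\bigr|_E\le\sup_{\xi\in\bar D}\bigl|g_2(t+\gamma_n,\xi,y(\xi))-\tilde g_2(t,\xi,y(\xi))\bigr|\le\sup_{\xi\in\bar D,\ \si\in B_\reals(R)}\bigl|g_2(t+\gamma_n,\xi,\si)-\tilde g_2(t,\xi,\si)\bigr|,
\]
which again goes to $0$ uniformly in $y\in B_E(R)$. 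Thus $T_\gamma B_2=\tilde B_2$ uniformly on $\reals\times B_{E\times E}(R)$ and $T_\gamma G_2=\tilde G_2$ uniformly on $\reals\times B_E(R)$, and continuity of $B_2(\cdot,\cdot,\cdot)$ on $\reals\times E\times E$ (resp.\ $G_2$) was already established in Section \ref{sec2}. By the equivalence in the theorem, both families are uniformly almost periodic.

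The main obstacle — though a mild one — is the passage from pointwise-in-$\xi$ convergence to sup-norm/operator-norm convergence: one must make sure the extracted subsequence works simultaneously for \emph{all} functions in the ball, and this is exactly why the hypothesis is stated as \emph{uniform} almost periodicity of ${\mathcal B}_R$ and ${\mathcal G}_R$ over $\xi\in\bar D$ and $\si$ in a ball, rather than almost periodicity for each fixed $(\xi,\si)$. A secondary point to be careful about is the bookkeeping of radii: an element of $B_{E\times E}(R)$ has components with sup-norm at most $R$, so the arguments $(x(\xi),y(\xi))$ lie in $B_{\reals^2}(\sqrt 2R)$, and one should invoke the hypothesis with $R$ replaced by $\sqrt 2R$ (or simply note that enlarging $R$ is harmless). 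Everything else is a routine application of the sequential criterion, so I would keep the written proof short, essentially displaying the two estimates above and citing Theorem \ref{teo3.3}.
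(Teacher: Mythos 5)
Your proof is correct, but it follows a genuinely different route from the paper. The paper works directly with the definition of uniform almost periodicity via $\e$-translation sets: by hypothesis, for each $\e>0$ there is a relatively dense set of numbers $\tau$ such that $|b_2(t+\tau,\xi,\si)-b_2(t,\xi,\si)|<\e$ for all $(t,\xi,\si)\in\reals\times\bar D\times B_{\reals^2}(R)$, and since $(x(\xi),y(\xi))$ stays in that ball whenever $(x,y)\in B_{E\times E}(R)$, the very same $\tau$ is an $\e$-almost period for $B_2(\cdot,x,y)$ uniformly over the ball of $E\times E$ (and likewise for $G_2$ in the $\mathcal{L}(E)$-norm); the whole proof is the one sup estimate, with no limits or subsequences. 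You instead pass through the Bochner sequential characterization, extracting a subsequence along which the scalar translates converge uniformly and transferring this to the Nemytskii operators. The core observation is the same in both arguments — the sup over $\xi$ of the scalar discrepancy is dominated by the sup over $(\xi,\si)$ in a fixed ball of $\reals^2$ — but your route carries extra bookkeeping that the paper avoids: you must produce the limit functions $\tilde b_2$, $\tilde B_2$, $\tilde G_2$ and verify they are continuous and $E$-valued (and, strictly speaking, the sequential criterion as stated in the paper asks for a continuous limit on all of $\reals\times X$, which forces a small additional continuity check on the ball). What your approach buys is robustness: it yields the limit objects explicitly, which can be convenient if one later wants to work with the hull of $B_2$. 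Two minor points: with the norm the paper puts on $E\times E$ one has $x(\xi)^2+y(\xi)^2\leq |x|_E^2+|y|_E^2\leq R^2$, so the arguments already lie in $B_{\reals^2}(R)$ and the enlargement to $\sqrt2 R$ is unnecessary (though harmless, as you note); and your appeal should be to the family version of the characterization (the three-way equivalence for uniformly almost periodic families), not to Theorem \ref{teo3.3}, which concerns a single function of $t$.
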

 
 \begin{proof} Due to the uniform  almost periodicity of the family ${\mathcal B}_R$, for any $\e>0$ there exists $l_{\e,R}>0$ such that in any interval of $\reals$ of length $l_{\e,R}$ we can find $\tau>0$ such that
\[|b_2(t+\tau,\xi,\si)-b_2(t,\xi,\si)|<\e,\ \ \ \ (t,\xi,\si) \in\,\reals\times \bar{D}\times B_{\reals^2}(R).\]
This implies that
\[\begin{array}{l}
\ds{|B_2(t+\tau,x,y)-B_2(t,x,y)|_E}\\
\vs
\ds{= \sup_{\xi \in\,\bar{D}}|b_2(t+\tau,x(\xi),y(\xi))-b_2(t,x(\xi),y(\xi))|<\e,\ \ (t,x,y) \in\,\reals\times B_{E\times E}(R).}
\end{array}\]
In a completely analogous way, we can show that the family $\{G_2(\cdot,y)\,:\,y \in\,B_E(R)\}$ is uniformly almost periodic.
\end{proof}

Now, for any $\mu,\nu \in\,{\cal P}(E)$, we define
\[d(\mu,\nu)=\sup\,\le\{\le|\int_E f(y)\,(\mu-\nu)(dy)\r|,\ |f|_{\text{\tiny{Lip}}}\leq 1\r\},\]
where
\[|f|_{\text{\tiny{Lip}}}=|f|_E+[f]_{\text{\tiny{Lip}}}=|f|_{E}+\sup_{\xi\neq \eta}\frac{|f(\xi)-f(\eta)|}{|\xi-\eta|}.\]
It is known that the space $({\cal P}(E),d)$ is a complete metric space and the distance $d$ generates the weak topology on ${\cal P}(E)$. 

In \cite{dpt} it is proven that if $A_2(\cdot)$ is periodic,  the family of functions \[\le\{B_2(\cdot,x,y)\,:\,(x,y) \in\,B_{E\times E}(R)\r\},\ \ \ \le\{G_2(\cdot,y)\,:\,y\in\,B_E(R)\r\},\]
 are both uniformly almost periodic, for any $R>0$ and the family of measures $\{\mu^x_t\}_{t \in\,\reals}$ is tight in $\mathcal{P}(E)$, then the mapping $t \in\,\reals\mapsto \mu^x_t \in\,\mathcal{P}(E)$ is almost periodic. The proof in \cite{dpt} is based on Theorem \ref{teo3.3}. Actually, it is proved that for every two sequences $\gamma^\prime$ and $\beta^\prime$ in $\reals$ there exist common  subsequences $\gamma\subset \gamma^\prime$ and $\beta\subset \beta^\prime$ such that $T_{\gamma+\beta}\mu^x_\cdot=T_\gamma T_\beta \mu^x_\cdot$,
pointwise on $\reals$.

Unlike in this paper, in \cite{dpt} it is assumed that the coefficients are Lipschitz continuous and the covariance  $Q_2^2$ of the noise is trace-class. But, all the arguments used in \cite{dpt} can be adapted to the present situation without major difficulties. Therefore, in view of Lemma \ref{lemma6.1}, if we prove that the family of measures $\{\mu^x_t\}_{t \in\,\reals},$
is tight in $\mathcal{P}(E)$, we obtain the following result

\begin{Theorem}
\label{cl10}
Under Hypotheses \ref{H0}, \ref{H1}, \ref{H2bis},  \ref{H3} and \ref{H6}, if  $\a>0$ is large enough and/or $L_{g_2}$ is small enough, we have that the mapping
\[t \in\,\reals\mapsto \mu^x_t \in\, \mathcal{P}(E),\]
is almost periodic, for any fixed $x \in\,E$.
\end{Theorem}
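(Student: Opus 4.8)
The plan is to reduce the statement to a tightness property of $\{\mu^x_t\}_{t\in\,\reals}$ and then invoke the argument of \cite{dpt}. As recalled before the statement, the proof in \cite{dpt}, which rests on the characterization of almost periodicity in Theorem \ref{teo3.3} (one extracts common subsequences $\gamma\subset\gamma^\prime$, $\beta\subset\beta^\prime$ with $T_{\gamma+\beta}\mu^x_\cdot=T_\gamma T_\beta\mu^x_\cdot$ pointwise on $\reals$), carries over to the present non-Lipschitz, non-trace-class setting: the uniform moment bound \eqref{cl75} and the exponential gap estimate \eqref{cl60} obtained in Propositions \ref{prop1}, \ref{prop2} and \ref{evo} are exactly what is needed to replace the Lipschitz-continuity arguments of \cite{dpt} when passing to the limit along these subsequences. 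Combining this with Lemma \ref{lemma6.1} and with the periodicity of $\gamma$ and $l$ assumed in Hypothesis \ref{H6}, everything is reduced to showing that, for every fixed $x\in\,E$, the family $\{\mu^x_t\,:\,t\in\,\reals\}$ is tight in $\mathcal{P}(E)$.

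To prove tightness I would establish a space-regularity estimate which is uniform in time: there exist $\theta\in\,(0,1)$ and, for every $p\geq 1$, a constant $c_p>0$ such that
\[
\sup_{t\in\,\reals}\,\E\,|\eta^x(t)|_{C^\theta(\bar{D})}^p\leq c_p\le(1+|x|_E^p\r),\ \ \ x\in\,E.
\]
Granting this, tightness is immediate: since the embedding $C^\theta(\bar{D})\hookrightarrow E$ is compact, for any $\varrho>0$ the ball $K_\varrho=\{z\in\,E\,:\,|z|_{C^\theta(\bar{D})}\leq\varrho\}$ is compact in $E$, and by the Chebyshev inequality $\mu^x_t(E\setminus K_\varrho)=\Pro(|\eta^x(t)|_{C^\theta(\bar{D})}>\varrho)\leq c_1(1+|x|_E)/\varrho$ for every $t\in\,\reals$.

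To obtain the uniform $C^\theta$ bound I would argue as in the proofs of Propositions \ref{prop1} and \ref{prop2}, but estimating the $C^\theta(\bar{D})$-norm rather than the $E$-norm. Fix $\la>\a$ large and write $\eta^x(t)=z_\la^x(t)+\La_\la^x(t)$, where $\La_\la^x$ is the stationary solution of the linear problem \eqref{linear} driven by $G_2(\cdot,\eta^x(\cdot))$ and $z_\la^x=\eta^x-\La_\la^x$ solves, pathwise, $dz_\la^x/dt=(A_2(t)-\la)z_\la^x+B_{2,\la}(t,x,\eta^x)$. For $\La_\la^x$, the $C^\theta$-estimate \eqref{new16-cteta} for the stochastic convolution (together with \eqref{stimapsi-cteta} for the $\psi_\la$ correction and with the moment bound \eqref{cl75}) gives $\sup_{t\in\,\reals}\E\,|\La_\la^x(t)|_{C^\theta(\bar{D})}^p\leq c_p(1+|x|_E^p)$, for $\theta$ as in \eqref{new16-cteta}. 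For $z_\la^x$, whose mild form is
\[
z_\la^x(t)=\psi_\la(z_\la^x;s)(t)+\int_s^t U_\la(t,r)\,B_{2,\la}(r,x,\eta^x(r))\,dr ,
\]
I would use \eqref{stimapsi-cteta} on the first term and the analytic smoothing \eqref{new10} (together with \eqref{gamma}) on $e^{\gamma(t,r)A_2}$ in the second, combined with the polynomial growth \eqref{5.4} of $B_2$ and the moment bounds \eqref{cl75} on $\eta^x$; a Hölder inequality in the convolution integral against an integrable kernel of the form $e^{-\la\tau}(\tau\wedge1)^{-\theta/2}$, with a power $k$ chosen large enough exactly as in Lemma \ref{lemma2.1}, yields $\sup_{t\in\,\reals}\E\,|z_\la^x(t)|_{C^\theta(\bar{D})}^k\leq c_k(1+|x|_E^{km_2})$. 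Adding the two bounds (and applying Jensen's inequality to pass to first moments) gives the required uniform regularity, hence tightness, hence the theorem.

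The main obstacle is the polynomial growth of $B_2$ in the fast variable, which rules out a direct Gronwall-type control of the convolution term in the $E$-norm: one has to trade growth for an integrable time singularity via the analytic smoothing \eqref{new10}, and the interplay between the power $k$, the Hölder exponent $\theta$ and the parameter $\la$ must be tuned so that the time kernel is integrable ($\theta k/2(k-1)<1$) and the exponential factors absorb all remaining terms uniformly over $t\in\,\reals$. This is the same bookkeeping already carried out in Section \ref{sec2} and in the proofs of Propositions \ref{prop1}--\ref{prop2}, so no genuinely new analytic difficulty arises; the only other point deserving care is checking that the limiting arguments of \cite{dpt} do go through with a merely bounded covariance and locally Lipschitz coefficients, which they do thanks precisely to the uniform estimates \eqref{cl75} and \eqref{cl60}.
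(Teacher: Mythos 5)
Your proposal follows essentially the same route as the paper: reduce the theorem to tightness of $\{\mu^x_t\}_{t\in\reals}$ via the adaptation of the argument of \cite{dpt} (using Lemma \ref{lemma6.1} and the estimates \eqref{cl75}, \eqref{cl60}), and obtain tightness from a uniform-in-time bound $\sup_{t\in\reals}\E|\eta^x(t)|^p_{C^\theta(\bar D)}<\infty$ proved with the smoothing estimates \eqref{new10}, \eqref{stimapsi-cteta}, \eqref{new16-cteta} and the uniform $E$-moments, followed by compactness of $C^\theta(\bar D)\hookrightarrow E$ and Chebyshev. The only cosmetic difference is that the paper derives the $C^\theta$ bound directly from the mild formulation of $\eta^x$ on the interval $[t-1,t]$ (so the initial term $|\eta^x(t-1)|_E$ is controlled by \eqref{cl75}), rather than through your $z_\la+\La_\la$ splitting, which is immaterial.
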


Thus, it only remains to prove tightness.

\begin{Lemma}
Under Hypotheses \ref{H0}, \ref{H1}, \ref{H2bis} and \ref{H3},  if $\a$ is sufficiently large and/or $L_{g_2}$ is sufficiently small, there exists $\theta>0$ such that for any $p\geq 1$ and for any $x \in\,E$
\begin{equation}
\label{cl13}
\sup_{t \in\,\reals}\E\,|\eta^x(t)|^p_{C^{\theta}(\bar{D})}\leq c_p\le(1+|x|_E^p\r).
\end{equation}
In particular, the family of measures 
\[\La_R:=\le\{\mu^x_t\,;\,t \in\,\reals,\ x \in\,B_E(R)\r\},\]
is tight in $\mathcal{P}(E)$, for any $R>0$.
\end{Lemma}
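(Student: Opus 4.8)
The plan is to deduce a H\"older bound for $\eta^x(t)$ directly from the mild identity \eqref{cl15}, which holds for every $s<t$, and then to pass to the limit $s\to-\infty$. The only information on $\eta^x$ itself that enters is the a priori bound \eqref{cl75}: all moments of $|\eta^x(r)|_E$ are finite and bounded, uniformly in $r\in\reals$, by $c_p(1+|x|_E^p)$. First I would fix $\theta\in(0,1)$ small enough that the H\"older regularity estimates \eqref{stimapsi-cteta} for $\psi_\a$ and \eqref{new16-cteta} for the stochastic convolution $\Gamma_\a$ hold with exponent $\theta$ (this forces $\theta<\eta-d/\bar p$ for a suitable $\bar p$ and imposes the threshold $\bar k$ of Lemma \ref{lemma2.1} on the integrability exponent), working with $p$ large and recovering small $p$ at the end by Jensen's inequality.

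Then I would estimate the four terms in \eqref{cl15} separately, using throughout the smoothing estimate \eqref{new10} for the analytic semigroup generated by $A_2$, evaluated at the parameter $\gamma(t,s)$, together with $\gamma(t,s)\ge\gamma_0(t-s)$ from \eqref{gamma}. For $U_\a(t,s)\eta^x(s)$ one gets a $C^\theta(\bar{D})$-norm bounded by $c\,e^{-\a(t-s)}((t-s)\wedge1)^{-\theta/2}|\eta^x(s)|_E$, which tends to $0$ in $L^p(\Omega;C^\theta(\bar{D}))$ as $s\to-\infty$ by \eqref{cl75}. For $\psi_\a(\eta^x;s)(t)$, applying \eqref{stimapsi-cteta} with $\la=\a$ and a small $\d\in(0,\a)$ and then taking expectations and inserting \eqref{cl75} bounds its $p$-th $C^\theta$-moment by $c_p(\a-\d)\int_s^t e^{-(\a-\d)(t-\rho)}\,\E|\eta^x(\rho)|_E^p\,d\rho\le c_p(1+|x|_E^p)$, uniformly in $s$. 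For the stochastic convolution $\Gamma_\a(\eta^x;s)(t)$, estimate \eqref{new16-cteta}, combined with the sublinear growth \eqref{new14} of $g_2$ and again \eqref{cl75}, gives a uniform-in-$s$ $C^\theta$-moment bound by $c_p(1+|x|_E^{p/m_2})$. Finally, for the drift integral $\int_s^t U_\a(t,r)B_2(r,x,\eta^x(r))\,dr$, estimate \eqref{new10} bounds the $C^\theta$-norm of $U_\a(t,r)B_2(r,x,\eta^x(r))$ by $c\,e^{-\a(t-r)}((t-r)\wedge1)^{-\theta/2}|B_2(r,x,\eta^x(r))|_E$; Minkowski's integral inequality, the growth estimate \eqref{5.4} for $B_2$, \eqref{cl75}, and the integrability of $e^{-\a u}(u\wedge1)^{-\theta/2}$ on $(0,\infty)$ (valid since $\theta<2$) then yield a $C^\theta$-moment bound uniform in $s$. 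Collecting the four estimates and letting $s\to-\infty$ gives \eqref{cl13}.

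Granting \eqref{cl13}, the tightness of $\La_R$ is immediate: for $x\in B_E(R)$ one has $\sup_{t\in\reals}\E\,|\eta^x(t)|_{C^\theta(\bar{D})}^p\le c_{p,R}<\infty$, so Chebyshev's inequality gives
\[
\mu^x_t\bigl(\{y\in E:\,|y|_{C^\theta(\bar{D})}>N\}\bigr)=\Pro\bigl(|\eta^x(t)|_{C^\theta(\bar{D})}>N\bigr)\le c_{p,R}\,N^{-p}
\]
for every $t\in\reals$ and $x\in B_E(R)$; since $\theta>0$, the ball $\{y:|y|_{C^\theta(\bar{D})}\le N\}$ is relatively compact in $E=C(\bar{D})$ by the Ascoli--Arzel\`a theorem, so taking $N$ large enough that $c_{p,R}N^{-p}\le\varepsilon$ produces a compact set $K_{\varepsilon,R}\subset E$ with $\mu^x_t(K_{\varepsilon,R})\ge 1-\varepsilon$ for all $t\in\reals$ and $x\in B_E(R)$.

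The step I expect to be the main obstacle is the exponent bookkeeping that keeps the bound polynomial in $|x|_E$. In the proof of \eqref{stima8} the dissipativity of $b_2$ in its fast variable (condition \eqref{nl102}, used through \eqref{5.5E2}) absorbs the super-linear part of the reaction term; the mild representation \eqref{cl15} offers no such cancellation, so the polynomial growth of order $m_2$ of $b_2$ acts on the drift integral directly. What rescues the estimate is that \eqref{cl75} is available for \emph{every} $p\ge1$ with the linear-in-$|x|_E$ scaling of the $L^p(\Omega;E)$-norm of $\eta^x(t)$, together with the growth exponent $1/m_2$ of $g_2$ which tames $\Gamma_\a$ in \eqref{new16-cteta}; these two facts let one close the estimate at a fixed power of $1+|x|_E$, uniformly in $t\in\reals$. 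A subsidiary, easily handled point is the simultaneous choice of $\theta$: it must be small enough for the singularity $((t-r)\wedge1)^{-\theta/2}$ to be integrable and for the Sobolev embeddings underlying \eqref{stimapsi-cteta}, \eqref{new16-cteta} and Lemma \ref{lemma2.1} --- with their threshold $\bar k$ --- to apply to the chosen large $p$, the remaining $p\in[1,\bar k)$ being recovered by Jensen's inequality at the end.
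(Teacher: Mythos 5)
Your proposal is correct and follows essentially the same route as the paper: the paper also bounds $|\eta^x(t)|_{C^\theta(\bar D)}$ through the mild identity \eqref{cl15} (taking $s=t-1$ rather than letting $s\to-\infty$, an immaterial difference), estimating the four terms via \eqref{new10}, \eqref{stimapsi-cteta}, \eqref{new16-cteta} and the uniform moment bound \eqref{cl75}, and then deduces tightness from Chebyshev together with the compact embedding of $C^\theta(\bar D)$ into $E$. Your remark about the exponent bookkeeping is apt, but note the paper's own proof likewise only yields a power $m_2 p$ of $|x|_E$ in the drift term, which is all that is needed for tightness.
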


\begin{proof}
Due to \eqref{stima8} and \eqref{stimaeta},  with $y=0$,  we have that for any $p\geq 1$
\begin{equation}
\label{cl20}
\sup_{t \in\,\reals}\E\,|\eta^x(t)|_{E}^p\leq c_p\,(1+|x|_E^p).\end{equation}
Moreover, thanks to  \eqref{new10} and \eqref{cl15}, for every  $t \in\,\reals$ and $\theta>0$
\[\begin{array}{l}
\ds{|\eta^x(t)|_{C^\theta(\bar{D})}\leq c\,|\eta^x(t-1)|_E+\le|\psi_\a(\eta^x;t-1)(t)\r|_{C^\theta(\bar{D})}}\\
\vs
\ds{+\int_{t-1}^t|U_{\a}(t,r) B_2(r,x,\eta^x(r))|_{C^\theta(\bar{D})}\,dr+ \le|\Gamma^x_\a(\eta^x,t-1)(t)\r|_{C^\theta(\bar{D})}.  }
\end{array}
\]
According to \eqref{stimapsi-cteta}, \eqref{new16-cteta} and \eqref{new10}, this implies that for some $\theta>0$ and any $0<\d<\a$ and $p\geq 1$
\[\begin{array}{l}
\ds{e^{\d p}\E\,|\eta^x(t)^p|_{C^\theta(\bar{D})}\leq c_p\,\E|\eta^x(t-1)|^p_E+c_p\sup_{r \in\,[t-1,t]}e^{\d p(r-t+1)}\E\le|\psi_\a(\eta^x;t-1)(r)\r|^p_{C^\theta(\bar{D})}}\\
\vs
\ds{+c_p\int_{t-1}^t|U_{\a}(t,r) B_2(r,x,\eta^x(r))|^p_{C^\theta(\bar{D})}\,dr+ c_p\sup_{r \in\,[t-1,t]}e^{\d p(r-t+1)}\E\le|\Gamma^x_\a(\eta^x;t-1)(r)\r|^p_{C^\theta(\bar{D})} }\\
\vs
\ds{\leq c_p\,\E|\eta^x(t-1)|^p_E+c_p\sup_{r \in\,[t-1,t]}e^{\d p(r-t+1)}\E\le|\eta^x(r)\r|^p_{E}}\\
\vs
\ds{+c_p\le(\int_{t-1}^t (t-r)^{-\frac {\theta } 2}\le(1+|x|_E+\E\,|\eta^x(r)|_E^{m_2 }\r)\,dr\r)^p+c_p\sup_{r \in\,[t-1,t]}e^{\d p(r-t+1)}\le(1+\E\le|\eta^x(r)\r|^{\frac p{m_2}}_{E}\r),}
\end{array}\]
so that
\[\begin{array}{l}
\ds{e^{\d p}\E\,|\eta^x(t)|^p_{C^\theta(\bar{D})}\leq c_p\,\le(\E|\eta^x(t-1)|^p_E+1+|x|_E^p\r)+c_p\sup_{r \in\,[t-1,t]}e^{\d p(r-t+1)}\E\le|\eta^x(r)\r|^p_{E}}\\
\vs
\ds{+c_p\le(\int_{t-1}^t (t-r)^{-\frac {\theta p } {2(p-1)}}\,dr\r)^{p-1}\int_{t-1}^t\E\le|\eta^x(r)\r|^{m_2 p}_{E}\,dr.}
\end{array}
\]
If $p\geq 2$, for any  $\theta<1$ we have  that $\theta p/(p-1)<2$. Then, thanks to \eqref{cl20}, we can conclude that \eqref{cl13} holds true, for any $p\geq 2$. Due to the H\"older inequality, \eqref{cl13} holds for any $p\geq 1$.

\end{proof}

\section{The averaged equation}

For any fixed $x \in\,E$, the mapping
$B_{1}(x,\cdot):E\to E$
is continuous and
\begin{equation}
 \label{nl122}
\le|B_{1}(x,y)\r|_E\leq c\le(1+|x|_E^{m_1}+|y|_E\r).
\end{equation}
$B_1$ is unbounded and only locally Lipschitz continuous, but, as a consequence of Proposition \ref{cl10}, it is still possible to prove the following result.

\begin{Lemma}
\label{lemma4.1}
Under the same hypotheses of Proposition \ref{cl10}, for every compact set $K\subset E$, the family of functions 
\begin{equation}
\label{cl72}
\le\{t \in\,\reals \mapsto   \int_EB_1(x,z)\,\mu^x_t(dz) \in\,E\,:\,x \in\,K\r\}
\end{equation}
is uniformly almost periodic.
\end{Lemma}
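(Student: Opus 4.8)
The plan is to combine the almost periodicity of $t\mapsto\mu^x_t$ for each fixed $x$ (Theorem \ref{cl10}), the Lipschitz dependence of $\mu^x_t$ on $x$ given by Proposition \ref{Prop3}, and a truncation of $B_1$ that brings the problem back into the weak topology of $\mathcal{P}(E)$, which is the topology in which Theorem \ref{cl10} is phrased. Throughout, fix $R>0$ with $K\subset B_E(R)$ and set $F_x(t):=\int_E B_1(x,z)\,\mu^x_t(dz)$; since $z\mapsto B_1(x,z)$ is continuous, hence Bochner measurable, and has the growth \eqref{nl122}, $F_x(t)$ is well defined by \eqref{cl50}.

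\emph{Truncation.} For $N>0$ I would put $T_N(r)=(r\wedge N)\vee(-N)$, $b_1^N(\xi,\sigma_1,\sigma_2)=b_1(\xi,\sigma_1,T_N(\sigma_2))$, $B_1^N(x,z)(\xi)=b_1^N(\xi,x(\xi),z(\xi))$ and $F^N_x(t):=\int_E B_1^N(x,z)\,\mu^x_t(dz)$. From \eqref{boundgb1} and \eqref{nl1200}, for every $\xi\in\bar{D}$ and $x\in B_E(R)$ the real function $z\mapsto b_1^N(\xi,x(\xi),z(\xi))$ is bounded and Lipschitz with respect to $|z|_E$, with both bounds depending only on $N$ and $R$; and, again by \eqref{nl1200},
\[
|B_1(x,z)-B_1^N(x,z)|_E\leq c_R\,(1+|z|_E^\theta)\,|z|_E\,\mathbf{1}_{\{|z|_E>N\}}.
\]
Integrating against $\mu^x_t$ and bounding $\mathbf{1}_{\{|z|_E>N\}}\leq |z|_E/N$ on the relevant set, \eqref{cl50} then yields a constant $c_R$ with $\sup_{t\in\reals}\sup_{x\in K}|F_x(t)-F^N_x(t)|_E\leq c_R/N=:\omega_R(N)$.

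\emph{Uniform almost periodicity of $t\mapsto\mu^x_t$ over $K$.} Every $f$ admissible in the definition of $d$ is $1$-Lipschitz, so $d\leq W_1$, the $1$-Wasserstein distance; since $\eta^{x_1}(t)$ and $\eta^{x_2}(t)$ are built from the same noise, Proposition \ref{Prop3} gives
\[
\sup_{t\in\reals}d(\mu^{x_1}_t,\mu^{x_2}_t)\leq c_R^{1/2}\,|x_1-x_2|_E,\qquad x_1,x_2\in K.
\]
Given $\epsilon>0$, I would choose a finite $\delta$-net $x_1,\dots,x_m$ of the compact set $K$ with $c_R^{1/2}\delta<\epsilon/3$. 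By Theorem \ref{cl10} each $t\mapsto\mu^{x_j}_t$ is almost periodic, so the finite family $\{\mu^{x_j}_\cdot\}_{j\leq m}$ is uniformly almost periodic and $T:=\bigcap_{j\leq m}T(\mu^{x_j}_\cdot,\epsilon/3)$ is relatively dense and includes an interval around $0$. For $x\in K$ pick $x_j$ with $|x-x_j|_E\leq\delta$; then for $\tau\in T$ and every $t\in\reals$ the triangle inequality combined with the two displays above gives $d(\mu^x_{t+\tau},\mu^x_t)<\epsilon$, so $T\subseteq\bigcap_{x\in K}T(\mu^x_\cdot,\epsilon)$, and the family $\{\mu^x_\cdot\}_{x\in K}$ is uniformly almost periodic.

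\emph{Conclusion and the main difficulty.} For each fixed $\xi$, dividing $z\mapsto b_1^N(\xi,x(\xi),z(\xi))$ by a constant $C_{N,R}$ that, by the truncation step, can be taken independent of $\xi$ and of $x\in K$ and at least as large as the sum of the sup-norm and the Lipschitz constant of this function, produces an admissible test function for $d$; hence $|F^N_x(t+\tau)-F^N_x(t)|_E\leq C_{N,R}\,d(\mu^x_{t+\tau},\mu^x_t)$ for all $x\in K$ and $t,\tau\in\reals$. Together with the previous paragraph this gives $\bigcap_{x\in K}T(\mu^x_\cdot,\epsilon/C_{N,R})\subseteq\bigcap_{x\in K}T(F^N_x,\epsilon)$, so for each $N$ the family $\{F^N_x\}_{x\in K}$ is uniformly almost periodic (the $F^N_x$ are continuous in $t$ because $B_1^N(x,\cdot)$ is bounded and continuous and $t\mapsto\mu^x_t$ is $d$-continuous). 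Finally, by the truncation step $F^N_x\to F_x$ uniformly over $t\in\reals$ and $x\in K$, and a limit that is uniform over both the time variable and the index of a uniformly almost periodic family is itself uniformly almost periodic; this proves the claim, and also gives the continuity of each $F_x$. The one genuinely delicate point is the truncation: Theorem \ref{cl10} only controls $\mu^x_t$ when tested against bounded Lipschitz functions, so the error in replacing the unbounded coefficient $B_1(x,\cdot)$ by $B_1^N(x,\cdot)$ has to be made small uniformly in $t\in\reals$ and $x\in K$, which is precisely where the uniform moment estimate \eqref{cl50} is used; the other place where the structure built earlier is really needed is the passage from almost periodicity for each fixed $x$ (Theorem \ref{cl10}) to uniform almost periodicity over $K$, which rests on Proposition \ref{Prop3} and the compactness of $K$.
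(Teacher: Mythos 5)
Your proof is correct and shares the paper's skeleton---truncate $B_1$ in its second argument, control the truncation error uniformly in $t\in\reals$ and $x\in K$ through the moment bound \eqref{cl50}, and then feed in Theorem \ref{cl10} and Proposition \ref{Prop3}---but the mechanism by which you upgrade almost periodicity for each fixed $x$ to uniformity over the compact set $K$ is genuinely different. The paper fixes the truncation level $\bar n=\bar n(\varepsilon,R)$, shows that $f(t,x)=\int_E B_{1,\bar n}(x,z)\,\mu^x_t(dz)$ is almost periodic in $t$ for every $x$ (through the same bounded-Lipschitz test-function estimate against $d$ that you use) and that $x\mapsto f(t,x)$ is Lipschitz on $K$ uniformly in $t$ (via \eqref{5.5k}, \eqref{cl75} and Proposition \ref{Prop3}), and then invokes \cite[Theorem 2.10]{fink}: an equicontinuous family of almost periodic functions is uniformly almost periodic. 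You instead apply Proposition \ref{Prop3} at the level of the measures, getting $\sup_{t}d(\mu^{x_1}_t,\mu^{x_2}_t)\leq c_R^{1/2}|x_1-x_2|_E$ from the coupling and $d\leq W_1$, run a finite $\delta$-net argument on $K$ (using the classical fact that finitely many almost periodic functions have relatively dense common $\varepsilon$-almost periods, which is also recoverable from the paper's Theorem 3.2 by diagonal extraction) to obtain uniform almost periodicity of $\{\mu^x_\cdot\,:\,x\in K\}$, transfer this to the truncated averages through the uniform constant $C_{N,R}$, and conclude with the standard three-$\varepsilon$ fact that a limit uniform in $t$ and in the index of uniformly almost periodic families is uniformly almost periodic. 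The ingredients consumed are identical; your route is more self-contained (no appeal to Fink's equicontinuity criterion) at the cost of the extra net/common-almost-period step and of letting the truncation level depend on $\varepsilon$, whereas the paper's argument is shorter once \cite[Theorem 2.10]{fink} is granted.
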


\begin{proof}
For every $n \in\,\nat$, we define
\[b_{1,n}(\xi,\si_1,\si_2):=\begin{cases}
\ds{b_{1}(\xi,\si_1,\si_2), } &  \ds{\text{if}\ |\si_2|\leq n,}\\
\\
\ds{b_1(\xi,\si_1 ,\si_2 n/|\si_2|),} &  \ds{\text{if}\ |\si_2|> n.}
\end{cases}\]
and we set
\[B_{1,n}(x,y)(\xi)=b_{1,n}(\xi,x(\xi),y(\xi)),\ \ \ \xi \in\,\bar{D}.\]
Clearly, we have that $B_{1,n}(x,\cdot):E\to E$ is Lipschitz continuous and bounded, for any fixed $x \in\,E$, and $B_{1,n}(x,y)=B_1(x,y)$, if $|y|_E\leq n$. Moreover, for any $R>0$ 
\begin{equation}
\label{cl70}
\sup_{|x|_E\leq R}|B_{1,n}(x,\cdot)|_{\text{\tiny{Lip}}_b(E)}:=c_{n,R}<\infty.
\end{equation}
Now, for any $n \in\,\nat$ we have
\[\int_EB_1(x,z)\,\mu^x_t(dz)=\int_EB_{1,n}(x,z)\,\mu^x_t(dz)+\int_{\{|z|_E>n\}}\le(B_1(x,z)-B_{1,n}(x,z)\r)\,\mu^x_t(dz).\]
According to \eqref{cl50} and \eqref{nl122}, we have
\[\begin{array}{l}
\ds{\sup_{t \in\,\reals}\,\le|\int_{\{|z|_E>n\}}\le(B_1(x,z)-B_{1,n}(x,z)\r)\,\mu^x_t(dz)\r|}\\
\vs
\ds{\leq c\,\sup_{t \in\,\reals}\int_{\{|z|_E>n\}}\le(1+|x|_E^{m_1}+|z|_E\r)\,\mu^x_t(dz)\leq \frac c n\le(1+|x|_E^{m_1+1}\r).}
\end{array}\]
This implies that for any $\e>0$ and $R>0$, we can find $\bar{n}=\bar{n}(\e,R) \in\,\nat$, such that
\[\sup_{\substack{ x \in\,B_E(R)\\t \in\,\reals}}\,\le|\int_{\{|z|_E>\bar{n}\}}\le(B_1(x,z)-B_{1,\bar{n}}(x,z)\r)\,\mu^x_t(dz)\r|\leq \frac \e 4,\]
so that for any $t, \tau \in\,\reals$ and $x \in\,B_E(R)$
\[\begin{array}{l}
\ds{\le|\int_EB_1(x,z)\,\mu^x_{t+\tau}(dz)-\int_EB_1(x,z)\,\mu^x_t(dz)\r|}\\
\vs
\ds{\leq \le|\int_EB_{1,\bar{n}}(x,z)\,\mu^x_{t+\tau}(dz)-\int_EB_{1,\bar{n}}(x,z)\,\mu^x_t(dz)\r|+\frac \e 2.}
\end{array}\]

Now, let us define
\[f(t,x)=\int_EB_{1,\bar{n}}(x,z)\,\mu^x_{t}(dz),\ \ \ \ (t,x) \in\,\reals\times E.\]
If we show that, for any compact set $K\subset E$, the family $\{f(\cdot,x)\,:\,x \in\,K\}$ is uniformly almost periodic, we have concluded our proof.

Since, for any $t, \tau \in\,\reals$, we have
\[|f(t+\tau,x)-f(t,x)|_E\leq |B_{1,\bar{n}}(x,\cdot)|_{\tiny{\text{Lip}}_b(E)}\,d(\mu^x_{t+\tau}, \mu^x_t),\]
in view of Theorem \ref{cl10} and \eqref{cl70}, the function $f(\cdot,x)$ is almost periodic, for any $x \in\,E$. 
Moreover, $f$ is continuous in $x \in\,K$, uniformly with respect to $t \in\,\reals$.  
Actually, thanks to \eqref{5.5k}, we have
\[\begin{array}{l}
\ds{|f(t,x)-f(t,y)|_E\leq \E\,|B_{1,\bar{n}}(x,\eta^x(t))-B_{1,\bar{n}}(y,\eta^y(t))|_E}\\
\vs
\ds{\leq c\,\E\le(1+|x|^\theta_E+|y|_E^\theta+|\eta^x(t)|_E^\theta+|\eta^y(t)|_E^\theta\r)\le(|x-y|_E+|\eta^x(t)-\eta^y(t)|_E\r).}
\end{array}\]
Now, as $K$ is compact it is bounded, so that there exists $R>0$ such that $K\subset B_E(R)$. Therefore, due to Proposition \ref{Prop3} and \eqref{cl75}, we can conclude that for any $x, y \in\,K$
\[\sup_{t \in\,\reals} |f(t,x)-f(t,y)|_E\leq c_R\le(|x-y|_H+ \sup_{t \in\,\reals}\le(\E\,|\eta^x(t)-\eta^y(t)|_E^2\r)^{\frac 12}\r)\leq c_R\,|x-y|_E,\]
and this implies  that the family of functions $\{f(t,\cdot)\,:\,t \in\,\reals\}$ is equicontinuous. In \cite[Theorem 2.10]{fink} it is proven that this implies the uniform almost periodicity of the family $\{f(\cdot,x)\,:\,x \in\,K\}$.

\end{proof}

Due to the almost periodicity of the family of mappings \eqref{cl72}, according to Theorem \ref{cl11} we can define
\[\bar{B}(x):=\lim_{T\to\infty}\frac 1T\int_0^T\int_EB_{1}(x,y)\,\mu_t^{x}(dy)\,dt,\ \ \ \ x \in\,E.\]
Thanks to \eqref{cl50} and \eqref{nl122}, we have that
\begin{equation}
\label{nl1230}
|\bar{B}(x)|_E\leq c\le(1+|x|^{m_1}_E\r).
\end{equation}

Actually, in view of \eqref{nl122} we have
\[\le|\frac 1T\int_0^T\int_E B_{1}(x,y)\,\mu_t^{x}(dy)\,dt\r|_E\leq c\,\frac 1T\int_0^T\int_E\le(1+|x|_E^{m_1}+|y|_E\r)\,\mu_t^{x}(dy)\,dt\]
and then, thanks to \eqref{cl50}, we have
\[|\bar{B}(x)|_E\leq c\,\le(1+|x|_E^{m_1}\r)+c_1\,\le(1+|x|_E\r),\]
which implies \eqref{nl1230}.

\medskip

As a consequence of \eqref{stimaeta}, we have the following crucial result.

\begin{Lemma}
\label{teonl}
Under Hypotheses \ref{H0} to \ref{H6},  if $\a$ is sufficiently large and/or $L_{g_2}$ is sufficiently small, there exist some constants $\kappa_1, \kappa_2\geq 0$ such that for any $T>0$, $s \in\,\reals$ and $x, y \in\,E$ 
\begin{equation}
\label{nl125}
\begin{array}{l}
\ds{\E\,\le|\frac 1T\int_s^{s+T}B_1(x,v^{x}(t;s,y))\,dt-\bar{B}(x)\r|^2_E\leq\frac cT\le(1+|x|_E^{\kappa_1}+|y|_E^{\kappa_2}\r)+\a(T,x),}
\end{array}
\end{equation}
for some  mapping $\a:[0,\infty)\times E\to [0,+\infty)$ such that 
\[\sup_{T>0}\,\a(T,x)\leq c\le(1+|x|_E^{m_1}\r),\ \ \ \ x \in\,E,\]
and for any compact set $K\subset E$
\[\lim_{T\to \infty}\,\sup_{x \in\,K}\a(T,x)=0.\]
\end{Lemma}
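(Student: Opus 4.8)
The plan is to use the exponential coupling \eqref{stimaeta} of Proposition~\ref{prop2} to replace the genuine fast process $v^x(\cdot\,;s,y)$ by the $s$- and $y$-independent process $\eta^x$, and then to treat the time average of $B_1(x,\eta^x(\cdot))$ by separating a deterministic almost periodic part from a genuinely stochastic fluctuation. First I would write
\[
B_1(x,v^x(t;s,y))-\bar{B}(x)=\big[B_1(x,v^x(t;s,y))-B_1(x,\eta^x(t))\big]+\big[B_1(x,\eta^x(t))-\bar{B}(x)\big],
\]
so that, by $(a+b)^2\leq 2a^2+2b^2$ and Jensen's inequality, the left-hand side of \eqref{nl125} is at most $\frac 2T\int_s^{s+T}\E\,|B_1(x,v^x(t;s,y))-B_1(x,\eta^x(t))|_E^2\,dt$ plus $2\,\E\,|\frac 1T\int_s^{s+T}B_1(x,\eta^x(t))\,dt-\bar{B}(x)|_E^2$. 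The first term I would bound using the local Lipschitz estimate \eqref{5.5k}, the Cauchy--Schwarz inequality, the moment bounds \eqref{stima8}, \eqref{cl75} and the exponential decay \eqref{stimaeta} (with a large enough exponent): the integrand is then $\leq c\,e^{-\delta(t-s)}(1+|x|_E^{\kappa_1}+|y|_E^{\kappa_2})$ for suitable $\kappa_1,\kappa_2$ depending on $m_1$ and on the exponent in \eqref{nl1200}, so that, after integrating in $t$, this contribution is $\leq\frac cT(1+|x|_E^{\kappa_1}+|y|_E^{\kappa_2})$, the first term in \eqref{nl125}. It then remains to study $\alpha(T,x):=2\sup_{s\in\reals}\E\,|\frac 1T\int_s^{s+T}B_1(x,\eta^x(t))\,dt-\bar{B}(x)|_E^2$; its uniform bound $\sup_{T>0}\alpha(T,x)\leq c(1+|x|_E^{m_1})$ follows at once from \eqref{nl122}, \eqref{cl75} and \eqref{nl1230}, so the real point is to prove that $\sup_{x\in K}\alpha(T,x)\to0$ as $T\to\infty$ for every compact $K\subset E$.

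To this end, setting $\beta(t,x):=\int_E B_1(x,z)\,\mu^x_t(dz)=\E\,B_1(x,\eta^x(t))$, I would split once more:
\[
\E\,\le|\frac 1T\int_s^{s+T}B_1(x,\eta^x(t))\,dt-\bar{B}(x)\r|_E^2\leq 2\,\E\,\le|\frac 1T\int_s^{s+T}\big(B_1(x,\eta^x(t))-\beta(t,x)\big)\,dt\r|_E^2+2\,\le|\frac 1T\int_s^{s+T}\beta(t,x)\,dt-\bar{B}(x)\r|_E^2.
\]
The second summand is deterministic, and by Lemma~\ref{lemma4.1} the family $\{t\mapsto\beta(t,x):x\in K\}$ is uniformly almost periodic, so Theorem~\ref{cl11} yields $\frac 1T\int_s^{s+T}\beta(t,x)\,dt\to\bar{B}(x)$ as $T\to\infty$, uniformly in $s\in\reals$ and $x\in K$; this disposes of it. For the first summand I would truncate $B_1$ to $B_{1,n}$ as in the proof of Lemma~\ref{lemma4.1}; the resulting $L^2(\Omega;E)$-error is, uniformly in $t$, at most $c(1+|x|_E^{\kappa})/n^{q}$ for an arbitrarily large $q$ (by \eqref{nl122}, \eqref{cl75} and Chebyshev), so a fixed large $n$ suffices once $T$ is large. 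For the bounded process $\Psi_n(t):=B_{1,n}(x,\eta^x(t))-\E\,B_{1,n}(x,\eta^x(t))$ I would fix $\xi\in\bar{D}$, use the flow identity $\eta^x(r)=v^x(r;t,\eta^x(t))$ for $t<r$ (valid by uniqueness of mild solutions) and apply \eqref{cl60} --- whose proof uses only the Lipschitz constant of the test function --- to $z\mapsto b_{1,n}(\xi,x(\xi),z(\xi))$, whose Lipschitz constant is controlled by \eqref{cl70}; this gives $|\E[\Psi_n(r)(\xi)\mid\mathcal{F}_t]|\leq c_n(x)\,e^{-\delta_1(r-t)}(1+|x|_E+|\eta^x(t)|_E)$. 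Hence, by the identity $\big(\int_s^{s+T}a\big)^2=2\int_s^{s+T}\!\int_s^r a(t)a(r)\,dt\,dr$, by the a.s. bound $|\Psi_n(t)|_E\leq c_n(x)$ (here is where the truncation is essential) and by conditioning on $\mathcal{F}_t$,
\[
\E\,\le(\frac 1T\int_s^{s+T}\Psi_n(t)(\xi)\,dt\r)^2\leq\frac{c_n(x)}{T},\qquad\text{uniformly in}\ \xi\in\bar{D}\ \text{and}\ s\in\reals.
\]

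The one genuinely non-routine step is to pass from this pointwise-in-$\xi$ bound to the estimate in the sup-norm $|\frac 1T\int_s^{s+T}\Psi_n(t)\,dt|_E=\sup_{\xi\in\bar{D}}|\frac 1T\int_s^{s+T}\Psi_n(t)(\xi)\,dt|$, because \eqref{cl60} only controls scalar functionals. Here I would use the a priori Hölder bound \eqref{cl13}: combined with the uniform continuity of $b_1$ in $\xi$ and the equicontinuity of the compact set $K$, it makes $W_n:=\frac 1T\int_s^{s+T}\Psi_n(t)\,dt$ equicontinuous on $\bar{D}$ in $L^2(\Omega)$, with a modulus $\omega_n$ independent of $T$, $s$ and $x\in K$ (the fast-solution contribution being $c_n(x)\,\rho^\theta\cdot\frac 1T\int_s^{s+T}[\eta^x(t)]_{C^\theta(\bar{D})}\,dt$, whose expectation is $\leq c_n(x)(1+|x|_E)\,\rho^\theta$ by \eqref{cl13}). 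Covering $\bar{D}$ by $N_\rho\leq c\rho^{-d}$ balls of radius $\rho$ centred at $\xi_1,\dots,\xi_{N_\rho}$ and using $|W_n|_E\leq\max_j|W_n(\xi_j)|+\omega_n(\rho)$, I get
\[
\E\,|W_n|_E^2\leq 2\sum_{j=1}^{N_\rho}\E\,|W_n(\xi_j)|^2+2\,\E\,\omega_n(\rho)^2\leq c_n(x)\le(\frac{1}{\rho^{d}\,T}+\E\,\omega_n(\rho)^2\r),
\]
so sending $T\to\infty$ first and then $\rho\to0$ (for each fixed $n$), and finally taking $n$ large to absorb the truncation error, shows that the fluctuation term --- and hence $\alpha(T,x)$ --- tends to $0$ as $T\to\infty$, uniformly in $s\in\reals$ and in $x$ in any compact $K\subset E$. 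This, together with the estimate of the deterministic summand, gives $\sup_{x\in K}\alpha(T,x)\to0$ and finishes the proof.

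I expect the crux to be precisely this last step: the mixing estimate gives decorrelation only for real-valued observables, whereas \eqref{nl125} is in the supremum norm over $\bar{D}$, and bridging the two is exactly what forces the use of the a priori $C^\theta(\bar{D})$-regularity \eqref{cl13} of the stationary fast solution --- the reason that bound was proved beforehand.
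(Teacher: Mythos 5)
Your proposal is correct, but it follows a genuinely different route from the paper's proof, and the comparison is instructive. The paper does not pass through $\eta^x$ at the level of the integrand: it applies the classical Khasminskii double-integral argument directly to scalar observables $\langle B_1(x,\cdot),\Lambda\rangle_E$, $\Lambda\in E^\star$, of the actual process $v^x(t;s,y)$ centered at the evolving mean $\int_E B_1(x,z)\,\mu^x_t(dz)$: by the Markov property the square of the time average becomes a double integral of $\E\,[\Pi^x_\Lambda B_1(r,v^x(r;s,y))\,P^x_{r,t}\Pi^x_\Lambda B_1(r,v^x(r;s,y))]$, the first factor is bounded via \eqref{5.4}, \eqref{stima8}, \eqref{cl50} and the second decays like $e^{-\delta(t-r)}$ via \eqref{5.5k} and \eqref{stimaeta}, yielding the $c/T$ bound \eqref{cl73} in one stroke, with no truncation and with explicit polynomial dependence on $|x|_E,|y|_E$; the residual term is then exactly the deterministic almost periodic average, treated by Lemma \ref{lemma4.1} and Theorem \ref{cl11}, which is what the paper calls $\a(T,x)$. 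Your scheme instead pays a $c/T$ toll to replace $v^x$ by $\eta^x$, and then places the whole stochastic fluctuation of $B_1(x,\eta^x(\cdot))$ inside $\a(T,x)$, controlling it by truncation, the same conditional-decorrelation computation pointwise in $\xi$ (your use of the flow identity and of \eqref{cl60} for merely Lipschitz observables is legitimate, since the proof of \eqref{cl60} only uses the Lipschitz seminorm), and a covering/equicontinuity argument based on \eqref{cl13} to upgrade to the sup-norm. What each buys: the paper's argument is shorter and keeps the entire stochastic part at rate $1/T$, but it passes from the scalar estimate \eqref{new33}, which holds for each fixed $\Lambda$, to the $E$-norm statement \eqref{cl73} without comment — precisely the sup-versus-expectation interchange you identify as the crux; your detour through the truncated, bounded, stationary observable together with the $C^\theta$ bound \eqref{cl13} handles that passage honestly, at the price of a non-explicit rate (the $\rho^{-d}/T$ covering term and the truncation error are absorbed into $\a(T,x)$ rather than into the $c/T$ term, which the statement of the lemma permits). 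Your treatment of the deterministic part, and of the uniform bound $\sup_{T>0}\a(T,x)\leq c\,(1+|x|_E^{m_1})$ (up to the same harmless exponent bookkeeping as in the paper), coincides with the paper's.
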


\begin{proof}
For any fixed  $\Lambda \in\,E^\star$ and $x\in\,E$, we denote by $\Pi^x_\La B_1$ the mapping
\[(t,y) \in\reals\times \,E\mapsto \Pi^x_\La B_1(t,y):=\le<B_1(x,y),\La\r>_E-\int_E\le<B_1(x,z),\La\r>_E\,\mu^x_t(dz) \in\,\reals.\] 
By proceeding as in the proof of \cite[Lemma 2.3]{av2} and \cite[Lemma 5.1]{pol}, we have
\begin{equation}
\label{new33}
\begin{array}{l}
\ds{\E\,\le(\frac 1T\int_s^{s+T}\le[\le<B_1(x,v^x(t;s,y)),\La\r>_E-\int_E\le<B_1(x,z),\La\r>_E\,\mu^x_t(dz)\r]\,dt\r)^2}\\
\vs
\ds{=\frac 2{T^2}\int_s^{s+T}\int_r^{s+T}  \E\le[ \Pi^x_\La B_1(r,v^x(r;s,y))\,P^x_{r,t}\Pi^x_\La B_1(r,v^x(r;s,y))\r]dt\,dr}\\
\vs
\ds{\leq \frac 2{T^2}\int_s^{s+T}\int_r^{s+T} \le(\E\,| \Pi^x_\La B_1(r,v^x(r;s,y))|^2\r)^{\frac 12}\,\le(\E\,|P^x_{r,t}\Pi^x_\La B_1(r,v^x(r;s,y))|^2\r)^{\frac 12}dt\,dr.}
\end{array}
\end{equation}
Due to \eqref{5.4}, \eqref{stima8} and \eqref{cl50}, we have
\begin{equation}
\label{new34}
\begin{array}{l}
\ds{\E\,| \Pi^x_\La B_1(r,v^x(r;s,y))|^2\leq c\le(1+|x|^{2 m_1}_E+\E\,|v^x(r;s,y)|_E^2\r)|\La|^2_{E^\star}}\\
\vs
\ds{
\leq c\le(1+|x|^{2m_1}_E+e^{-2\d (r-s)}|y|_E^2\r)|\La|_{E^\star}^2.}
\end{array}\end{equation}
Moreover,  due to 
\eqref{5.5k}
we have
\[\le|\le<B_1(x,y),\La\r>_E-\le<B_1(x,z),\La\r>_E\r|\leq c\,|y-z|_E\le(1+|x|_E^\theta+|y|_E^\theta+|z|_E^\theta\r)|\La|_{E^\star},\]
so that, thanks to  \eqref{stimaeta} we have
\[\begin{array}{l}
\ds{\E\,|P^x_{r,t}\Pi^x_\La B_1(r,v^x(r;s,y))|^2\leq c\le(1+|x|_E^{2(\theta\vee1)}+|y|_E^{ 2(\theta\vee 1)}\r)|\La|_{E^\star}^2e^{-2 \d(t-r)}.}

\end{array}\]
Therefore, if we plug the estimate above and estimate \eqref{new34} into \eqref{new33}, we get
\begin{equation}
\label{cl73}
\begin{array}{l}
\ds{\E\,\le|\frac 1T\int_s^{s+T}\le[B_1(x,v^x(t;s,y))-\int_E B_1(x,z)\,\mu^x_t(dz)\r]\,dt\r|_E^2}\\
\vs
\ds{\leq c\,\le(1+|x|^{m_1}_E+|y|_E\r)\le(1+|x|_E^{\theta\vee 1}+|y|_E^{ \theta\vee 1}\r)\frac 1{T^2}\int_s^{s+T}\int_r^{s+T}e^{-\d(t-r)}\,dt\,dr}\\
\vs
\ds{\leq c\,\le(1+|x|^{m_1}_E+|y|_E\r)\le(1+|x|_E^{\theta\vee 1}+|y|_E^{ \theta\vee 1}\r)\,\frac 1{T}.}
\end{array}\end{equation}

Next, thanks to Lemma \ref{lemma4.1} and Theorem \ref{cl11}, 
we have that the limit
\[ \lim_{T\to \infty}\frac 1T\int_s^{s+T}\int_E B_1(x,z)\,\mu^x_t(dz)\,dt \in\,E,\]
converges  to $\bar{B}_1(x)$, uniformly with respect to $s \in\,\reals$ and $x$ in any compact  set $K\subset E$. Therefore, if we define
\[\a(T,x)=2\, \le|\frac 1T\int_s^{s+T}\int_E B_1(x,z)\,\mu^x_t(dz)\,dt -\bar{B}(x)\r|^2_E,\]
we can conclude.

\end{proof}

\medskip

\begin{Lemma}
\label{lemma5.2}
Under Hypotheses \ref{H0} to \ref{H6},  if $\a$ is sufficiently large and/or $L_{g_2}$ is sufficiently small,  we have that the mapping
$\bar{B}:E\to E$ is locally Lipschitz-continuous. Moreover, for any $x, h \in\,E$ and $\d \in\,\mathcal{M}_h$
\begin{equation}
\label{nl1202}
 \le<\bar{B}(x+h)-\bar{B}(x),\d\r>_E\leq
c\,\le(1+|h|_E+|x|_E\r).
\end{equation}
\end{Lemma}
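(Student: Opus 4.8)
The plan is to work throughout with the probabilistic representation of $\bar B$. Since $\mu^x_t$ is the law of $\eta^x(t)$ and, by \eqref{nl122} and \eqref{cl75}, the map $z\mapsto B_1(x,z)$ is $\mu^x_t$-integrable, one has $\int_E B_1(x,z)\,\mu^x_t(dz)=\E\,B_1(x,\eta^x(t))$, so that
\[
\bar B(x)=\lim_{T\to\infty}\frac 1T\int_0^T \E\,B_1(x,\eta^x(t))\,dt,\qquad x\in E,
\]
the limit existing in $E$ by Lemma \ref{lemma4.1} and Theorem \ref{cl11}. Consequently
\[
\bar B(x_1)-\bar B(x_2)=\lim_{T\to\infty}\frac 1T\int_0^T \E\le[B_1(x_1,\eta^{x_1}(t))-B_1(x_2,\eta^{x_2}(t))\r]\,dt,
\]
and both assertions will reduce to estimating the integrand uniformly in $t\in\reals$; averaging in time and letting $T\to\infty$ then carries the bound over to $\bar B$ for free.

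For the local Lipschitz continuity I would fix $R>0$ and $x_1,x_2\in B_E(R)$, apply \eqref{5.5k} pathwise with $y_i=\eta^{x_i}(t)$, and then the Cauchy--Schwarz inequality, to obtain
\[
\E\,|B_1(x_1,\eta^{x_1}(t))-B_1(x_2,\eta^{x_2}(t))|_E\leq c\,\le(\E\,\le(1+|\eta^{x_1}(t)|_E^{\theta}+|\eta^{x_2}(t)|_E^{\theta}\r)^2\r)^{\frac12}\le(\E\,\le(|x_1-x_2|_E+|\eta^{x_1}(t)-\eta^{x_2}(t)|_E\r)^2\r)^{\frac12}.
\]
The first factor on the right is $\leq c_R$ by \eqref{cl75}, while the second is $\leq c_R\,|x_1-x_2|_E$ by Proposition \ref{Prop3}; hence $\E\,|B_1(x_1,\eta^{x_1}(t))-B_1(x_2,\eta^{x_2}(t))|_E\leq c_R\,|x_1-x_2|_E$ uniformly in $t\in\reals$, so that $|\bar B(x_1)-\bar B(x_2)|_E\leq c_R\,|x_1-x_2|_E$.

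For \eqref{nl1202} I would fix $x,h\in E$ and $\d\in\mathcal{M}_h$, apply the (deterministic) functional $\d$ to the representation above, and exchange it with the expectation, writing $\le<\bar B(x+h)-\bar B(x),\d\r>_E$ as the limit of the time averages of $\E\,\le<B_1(x+h,\eta^{x+h}(t))-B_1(x,\eta^x(t)),\d\r>_E$. The key step is then to invoke \eqref{5.5E} with $y=\eta^x(t)$ and $k=\eta^{x+h}(t)-\eta^x(t)$ — which is legitimate precisely because $\d\in\mathcal{M}_h$ — to get, $\mathbb{P}$-a.s.,
\[
\le<B_1(x+h,\eta^{x+h}(t))-B_1(x,\eta^x(t)),\d\r>_E\leq c\,\le(1+|h|_E+|\eta^{x+h}(t)-\eta^x(t)|_E+|x|_E+|\eta^x(t)|_E\r),
\]
and then to take expectations, bounding $\E\,|\eta^{x+h}(t)-\eta^x(t)|_E\leq\E\,|\eta^{x+h}(t)|_E+\E\,|\eta^x(t)|_E\leq c\,(1+|h|_E+|x|_E)$ by the triangle inequality and \eqref{cl75}. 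This gives $\E\,\le<B_1(x+h,\eta^{x+h}(t))-B_1(x,\eta^x(t)),\d\r>_E\leq c\,(1+|h|_E+|x|_E)$ uniformly in $t$, whence \eqref{nl1202}.

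The main subtlety — and the reason \eqref{nl1202} cannot simply be read off from the local Lipschitz property — is that its constant must be independent of the radius $R$. Hence in the last step one must resist using the $R$-dependent estimate of Proposition \ref{Prop3} to control $\E\,|\eta^{x+h}(t)-\eta^x(t)|_E$, and settle instead for the crude triangle-inequality bound, which is enough exactly because \eqref{5.5E} only demands linear, not Lipschitz, control in $h$. The remaining points — the $\mu^x_t$-integrability that makes the representation meaningful and the interchange of $\E$ with the fixed bounded functional $\d$ — are routine given \eqref{nl122} and \eqref{cl75}.
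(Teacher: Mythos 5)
Your proposal is correct and follows essentially the same route as the paper: the same representation of $\bar{B}$ through $\eta^x$, the estimate \eqref{5.5k} combined with \eqref{cl75} and Proposition \ref{Prop3} for the local Lipschitz bound, and \eqref{5.5E} combined with \eqref{cl75} (independently of $R$) for \eqref{nl1202}. The Cauchy--Schwarz step and the remark on $R$-independence only make explicit what the paper leaves implicit.
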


\begin{proof}
For any $x_1, x_2 \in\,E$ we have
\[\bar{B}(x_1)-\bar{B}(x_2)=\lim_{T\to \infty} \frac 1 T\int_0^T\E\,\le(B_1(x_1,\eta^{x_1}(t))-B_1(x_2,\eta^{x_2}(t))\r)\,dt,\ \ \ \ \text{in}\ E.\]
By using \eqref{5.5k} we have
\[\begin{array}{l}
\ds{\le|B_1(x_1,\eta^{x_1}(t))-B_1(x_2,\eta^{x_2}(t))\r|_E}\\
\vs
\ds{\leq c\le(1+|x_1|_E^\theta+|x_2|_E^\theta+|\eta^{x_1}(t)|_E^\theta+|\eta^{x_2}(t)|_E^\theta\r)\le(|x_1-x_2|_E+|\eta^{x_1}(t)-\eta^{x_2}(t)|_E\r),}
\end{array}\]
and then, due to \eqref{cl75}, we get
\[\begin{array}{l}
\ds{\sup_{t \in\,\reals}\le|\E\le(B_1(x_1,\eta^{x_1}(t))-B_1(x_2,\eta^{x_2}(t))\r)\r|_E}\\
\vs
\ds{\leq c\le(1+|x_1|_E^\theta+|x_2|_E^\theta\r)
\le(|x_1-x_2|_E+\sup_{t \in\,\reals}\le(\E|\eta^{x_1}(t)-\eta^{x_2}(t)|^2_E\r)^{\frac 12}\r).}
\end{array}\]
Thanks to \eqref{cl76}, this implies that for any  $R>0$
\[x_1, x_2 \in\,B_E(R)\Longrightarrow |B_1(x_1)-B_1(x_2)|_E\leq c_R\,|x_1-x_2|_E.\]

Concerning \eqref{nl1202}, if $\d \in\,\mathcal{M}_h$ we have
\[\le<\bar{B}(x+h)-\bar{B}(x),\d\r>_E=\lim_{T\to \infty}\frac 1T\int_0^T\E\,\le<B_1(x+h,\eta^{x+h}(s))-B_1(x,\eta^{x}(s)),\d\r>_E\,ds.\]
Now, due to \eqref{5.5E} we have
\[\begin{array}{l}
\ds{\le<B_1(x+h,\eta^{x+h}(s))-B_1(x,\eta^{x}(s)),\d\r>_E}\\
\vs
\ds{\leq c\le(1+|x|_E+|h|_E+|\eta^{x+h}(s)|_E+|\eta^{x}(s)|_E\r),}
\end{array}\]
and then, thanks again to \eqref{cl75}, we conclude
\[\begin{array}{l}
\ds{\le<\bar{B}(x+h)-\bar{B}(x),\d\r>_E}\\
\vs
\ds{\leq \limsup_{T\to \infty}\frac 1T\int_0^Tc\le(1+|x|_E+|h|_E+\E|\eta^{x+h}(s)|_E+\E|\eta^{x}(s)|_E\r)\,ds\leq c\le(1+|x|_E+|h|_E\r).}
\end{array}\]
\end{proof}

\medskip

Now, we can introduce the averaged equation
\begin{equation} \label{avepol}
du(t)=\le[A_1
u(t)+\bar{B}(u(t))\r]\,dt+G(u(t))\,dw^{Q_1}(t),\
\ \ \ u(0)=x \in\,E.\end{equation}
In view of Lemma \ref{lemma5.2} and of \cite[Theorem 5.3]{cerrai1}, for any $x \in\,E$, $T>0$ and $p\geq 1$ equation \eqref{avepol} admits a unique mild solution
$\bar{u} \in\,L^p(\Omega;C_b((0,T];E))$. In the next section we will show that the slow motion $u_\e$ converges in probability to the averaged motion $\bar{u}$.

\section{The averaging limit}

In this last section we prove that the slow motion $u_\e$ converges to the averaged motion $\bar{u}$, as $\e\to 0$. The proof of this averaging result is under many respects similar to the proof of \cite[Theorem 61]{pol}.\begin{Theorem}
\label{zebra}
Assume that Hypotheses \ref{H0} to \ref{H6} hold and fix $x \in\,C^\theta(\bar{D})$, for some $\theta>0$, and $y \in\,E$. Then, if $\a$ is large enough and/or $L_{g_2}$ is small enough, for any $T>0$ and $\eta>0$ we have
\begin{equation}
\label{new61}
\lim_{\e\to 0}\ \Pro\le(\sup_{t \in\,[0,T]}|u_\e(t)-\bar{u}(t)|_E>\eta\r)=0,
\end{equation}
where $\bar{u}$ is the solution of the averaged equation \eqref{avepol}.
\end{Theorem}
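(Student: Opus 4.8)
The plan is to adapt Khasminskii's time-discretization method, proceeding along the lines of the autonomous case treated in \cite{pol}, with the evolution family $\{\mu^x_t\}_{t\in\,\reals}$ and the ergodic estimate of Lemma \ref{teonl} taking over the role that the invariant measure and \eqref{nl125-intro} play there. First I would record the preliminary reductions. By \eqref{new6}, \eqref{fin1000}, the Kolmogorov test and Ascoli--Arzel\`a the laws $\{\mathcal{L}(u_\e)\}_{\e\in\,(0,1]}$ are tight in $C([0,T];E)$, while \eqref{stima2}, \eqref{new2}, \eqref{cl50} and \eqref{stimaeta} yield moment bounds for $u_\e$, $v_\e$ and $\bar u$ that are uniform in $\e\in\,(0,1]$. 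Since $\bar B$ is only locally Lipschitz, I localize in space: for $R>0$ set
\[\tau_{\e,R}:=\inf\le\{t\geq 0\,:\,|u_\e(t)|_E\vee|\bar u(t)|_E\geq R\r\}\wedge T,\]
and use Chebyshev together with the uniform bounds to get $\sup_{\e\in\,(0,1]}\Pro(\tau_{\e,R}<T)\leq c/R$. It then suffices to show, for every fixed $R>0$, that $\sup_{t\leq \tau_{\e,R}}|u_\e(t)-\bar u(t)|_E\to 0$ in probability as $\e\to 0$, since splitting \eqref{new61} over $\{\tau_{\e,R}=T\}$ and its complement and letting $R\to\infty$ gives the claim.

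Next, fix a mesh $\Delta=\Delta(\e)$ with $\Delta\to 0$ and $\Delta/\e\to\infty$ as $\e\to 0$, and partition $[0,T]$ into consecutive intervals $[t_k,t_{k+1})$ of length $\Delta$. On each such interval introduce the frozen fast process $\hat v_\e$ solving
\[d\hat v_\e(t)=\frac 1\e\le[(A_2(t/\e)-\a)\hat v_\e(t)+B_2(t/\e,u_\e(t_k),\hat v_\e(t))\r]dt+\frac 1{\sqrt\e}\,G_2(t/\e,\hat v_\e(t))\,dw^{Q_2}(t),\]
with $\hat v_\e(t_k)=v_\e(t_k)$; after the change of variable $s=t/\e$ and using the Markov property, $\hat v_\e$ restricted to $[t_k,t_{k+1}]$ is, conditionally on $\mathcal{F}_{t_k}$, the process $v^{u_\e(t_k)}(\,\cdot\,;t_k/\e,v_\e(t_k))$ of equation \eqref{fast}, so that Lemma \ref{teonl} applies to the block averages of $s\mapsto B_1(u_\e(t_k),\hat v_\e(s))$ with effective horizon $\Delta/\e$. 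Using the local Lipschitz continuity of $b_2$ and $g_2$, the one-sided bounds \eqref{nl101bis}, \eqref{5.5E2} and the dissipativity of $A_2-\a$, the H\"older-in-time estimate \eqref{fin1000} for the increments of $u_\e$ (combined with \eqref{new60}), and a stopped Gronwall argument, I would prove that
\[\sup_k\,\E\,\sup_{t\in\,[t_k,t_{k+1}\wedge\tau_{\e,R}]}|v_\e(t)-\hat v_\e(t)|_E^p\leq c_{p,R}\,\omega_1(\e),\]
with $\omega_1(\e)\to 0$ as $\e\to 0$.

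Writing the mild form of $u_\e-\bar u$, the drift contribution is $\int_0^t e^{(t-s)A_1}\big(B_1(u_\e(s),v_\e(s))-\bar B(\bar u(s))\big)\,ds$, and for $s\in\,[t_k,t_{k+1})$ I decompose its integrand as
\[\big(B_1(u_\e(s),v_\e(s))-B_1(u_\e(t_k),\hat v_\e(s))\big)+\big(B_1(u_\e(t_k),\hat v_\e(s))-\bar B(u_\e(t_k))\big)+\big(\bar B(u_\e(t_k))-\bar B(u_\e(s))\big)+\big(\bar B(u_\e(s))-\bar B(\bar u(s))\big).\]
On $[0,\tau_{\e,R}]$ the first term is controlled by the previous step together with \eqref{5.5k}; the time-integral of the second term is estimated blockwise by \eqref{nl125} with $T=\Delta/\e$ and then summed over the $O(T/\Delta)$ blocks --- here the polynomial growth of $B_1$ forces replacing it by the truncations $B_{1,n}$ of Lemma \ref{lemma4.1} and $\bar B$ by $\bar B_n(x):=\lim_{T\to\infty}\frac 1T\int_0^T\int_E B_{1,n}(x,z)\,\mu^x_t(dz)\,dt$, the truncation error being bounded uniformly in $\e$ through \eqref{new2} and \eqref{cl50}; the third term is handled by the local Lipschitz continuity of $\bar B$ (Lemma \ref{lemma5.2}) and \eqref{fin1000}; and the fourth term is bounded by $c_R|u_\e(s)-\bar u(s)|_E$ on the localizing event. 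The stochastic term $\int_0^t e^{(t-s)A_1}\big(G_1(u_\e(s))-G_1(\bar u(s))\big)\,dw^{Q_1}(s)$ is treated, as in \cite[Theorem 5.3]{cerrai1}, by the factorization method and a maximal inequality, using the global Lipschitz continuity of $G_1$, and contributes a term $c\sup_{r\leq t}|u_\e(r)-\bar u(r)|_E$ plus an $\e$-independent vanishing remainder. Combining all of this with the smoothing and dissipativity of $e^{tA_1}$ (and, if one works with the sub-differential inequality \eqref{dersub}, after writing $A_1=(A_1-\lambda)+\lambda$), a stopped Gronwall lemma over $[0,\tau_{\e,R}]$ yields $\E\,\sup_{t\leq\tau_{\e,R}}|u_\e(t)-\bar u(t)|_E\to 0$ as $\e\to 0$ for every fixed $R$, which together with the first step proves \eqref{new61}.

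The main obstacle is that $v_\e$ satisfies only $L^p$-in-time moment bounds (\eqref{new2}) and no uniform supremum bound, so it cannot be localized pointwise in time the way $u_\e$ is; consequently the averaging of $B_1(u_\e(t_k),\hat v_\e(\cdot))$ over each block must be carried out through the truncated coefficients $B_{1,n}$ --- the ``suitable approximation for the family of measures $\{\mu^x_t\}$'' alluded to in the Introduction --- with an $\e$-uniform control of the resulting error. Moreover, the constants provided by Lemma \ref{teonl} grow polynomially in $|u_\e(t_k)|_E$ and $|v_\e(t_k)|_E$, so summing the block errors over the $\sim T/\Delta$ intervals produces a factor that has to be dominated by the rate at which $\Delta/\e\to\infty$; arranging $\Delta=\Delta(\e)$ so that this requirement, the requirement $\Delta\to 0$, and the control of the frozen-process error $\omega_1(\e)$ are all simultaneously met --- while remaining on the space-localizing event $[0,\tau_{\e,R}]$ --- is the delicate point of the proof.
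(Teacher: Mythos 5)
Your plan is a genuinely different route from the paper's. The paper does not run a Gronwall estimate on $u_\e-\bar u$ at all: it tests the equation against a fixed $h \in D(A_1)$, uses the tightness of $\{\mathcal{L}(u_\e)\}_{\e\in(0,1]}$ in $C([0,T];E)$ to reduce Theorem \ref{zebra} to showing $\E\sup_{t\le T}|R_\e(t)|\to 0$ (Lemma \ref{new62}), and localizes in space by truncating the coefficients in the \emph{slow} variable ($b_{1,n}$, $b_{2,n}$, $g_{1,n}$ with $|\si_1|\le n$), which makes $b_{2,n}$ Lipschitz in the frozen slow component (needed in Lemma \ref{lemma63}) and $\bar B_n$ globally Lipschitz; Khasminskii's discretization is then run with $\d_\e=\e\log\e^{-\kappa}$ and the blockwise estimate \eqref{new68}. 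Your substitutes (stopping time $\tau_{\e,R}$ plus \eqref{new60} in place of coefficient truncation; a direct stopped-Gronwall bound on $\E\sup_{t\le\tau_{\e,R}}|u_\e-\bar u|_E$) are reasonable in outline, but as written the proposal has concrete gaps.

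First, the step where you ``estimate the time-integral of the second term blockwise by \eqref{nl125}'' does not go through directly in the mild formulation: the integrand in the drift convolution is $e^{(t-s)A_1}\bigl(B_1(u_\e(t_k),\hat v_\e(s))-\bar B(u_\e(t_k))\bigr)$, and Lemma \ref{teonl} controls only the \emph{unweighted} block average $\frac{1}{\d_\e}\int_{t_k}^{t_{k+1}}(\cdots)\,ds$; since $e^{(t-s)A_1}$ varies inside each block (and the integrand itself is not small pointwise), you must first freeze the semigroup on each block and control the commutation error $e^{(t-s)A_1}-e^{(t-t_k)A_1}$, uniformly in $t$ and summed over $O(T/\d_\e)$ blocks. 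This is precisely the difficulty the paper's weak formulation against $h\in D(A_1)$ is designed to avoid, and your proposal never addresses it. Second, the correction $\a(T,x)$ in Lemma \ref{teonl} vanishes as $T\to\infty$ only \emph{uniformly on compact subsets} of $E$; your localization puts $u_\e(t_k)$ in the ball $B_E(R)$, which is not compact in $E=C(\bar D)$, so $\sup_{x\in B_E(R)}\a(\zeta_\e,x)$ need not go to zero. The paper closes this by using the tightness of the values $u_{\e,n}(k\d_\e)$ (from \eqref{new6}) to pass to a compact $K_\eta$ and by the uniform bound \eqref{fin1011} on the complement; you list tightness among the preliminaries but never invoke it at the point where it is actually needed. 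Third, two smaller inaccuracies: the stochastic convolution cannot be bounded pathwise by $c\sup_{r\le t}|u_\e(r)-\bar u(r)|_E$ --- with the factorization method you get an integral-in-time bound for $p$-th moments, so the Gronwall loop must be run in $L^p(\Omega)$ rather than on first moments; and the truncation you import from Lemma \ref{lemma4.1} acts on the \emph{fast} variable (in which $B_1$ grows only linearly, cf.\ \eqref{boundgb1}), whereas the truncation the paper needs in this proof is in the slow variable --- so that step is either unnecessary or misattributed, and the real role of the paper's truncation (Lipschitz continuity of $B_{2,n}$ and of $\bar B_n$) is only partially replaced by your stopping time.
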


For any $h \in\,D(A_1)$, the slow motion $u_\e$ satisfies the identity
\[\begin{array}{l}
\ds{\int_Du_\e(t,\xi)h(\xi)\,d\xi=\int_D x(\xi) h(\xi)\,d\xi+\int_0^t\int_D u_\e(s,\xi)A_1h(\xi)\,d\xi\,ds}\\
\vs
\ds{+\int_0^t\int_D \bar{B}(u_\e(s,\cdot))(\xi)h(\xi)\,d\xi\,ds+\int_0^t\int_D[G_1(u_\e(s)h](\xi)dw^{Q_2}(s,\xi)+R_\e(t),}
\end{array}\]
where
\[R_\e(t)=\int_0^t\int_D \le(B_1(u_\e(s),v_\e(s))(\xi)-\bar{B}(u_\e(s))(\xi)\r)h(\xi)\,d\xi\,ds.\]
Therefore, as in \cite{av2} and \cite{pol}, due to the tightness of the family  $\{{\cal L}(u_\e)\}_{\e \in\,(0,1]}$  in ${\cal P}(C([0,T];E))$, in order to prove Theorem \ref{zebra} it is sufficient to prove that
\begin{Lemma}
\label{new62}
Under the same hypotheses of Theorem \ref{zebra}, for any $T>0$ we have
\begin{equation}
\label{re}
\lim_{\e\to 0}\, \E\sup_{t \in\,[0,T]}|R_\e(t)|_E=0.
\end{equation}
\end{Lemma}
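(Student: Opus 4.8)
The plan is to run the classical Khasminskii time-discretization scheme, now adapted to the non-autonomous setting through the evolution family of measures $\{\mu^x_t\}$, and with an additional localization in space forced by the polynomial growth of $B_1$ and $B_2$. Fix $\e\in(0,1)$ and choose a mesh $\d=\d_\e>0$ with $\d_\e\to 0$ and $\d_\e/\e\to+\infty$ as $\e\to 0$; partition $[0,T]$ into the intervals $[t_k,t_{k+1}]$, $t_k=k\d$. On each such interval I would introduce the auxiliary fast process $\hat v_\e$ obtained from the fast equation in \eqref{astrattoEory} by freezing the slow component at $u_\e(t_k)$, namely $d\hat v_\e(t)=\frac1\e[(A_2(t/\e)-\a)\hat v_\e(t)+B_2(t/\e,u_\e(t_k),\hat v_\e(t))]\,dt+\frac1{\sqrt\e}G_2(t/\e,\hat v_\e(t))\,dw^{Q_2}(t)$ on $[t_k,t_{k+1}]$ with $\hat v_\e(t_k)=v_\e(t_k)$. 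By the time rescaling $\sigma=t/\e$, conditionally on $\mathcal F_{t_k}$ the process $s\mapsto\hat v_\e(s)$ has the same law as $s\mapsto v^{u_\e(t_k)}(s/\e;t_k/\e,v_\e(t_k))$, where $v^x(\cdot;s,y)$ solves the macroscopic fast equation \eqref{fast}; hence $\frac1\d\int_{t_k}^{t_{k+1}}B_1(u_\e(t_k),\hat v_\e(s))\,ds$ becomes, after the change of variables, precisely a Birkhoff-type average $\frac1{T'}\int_{s_k}^{s_k+T'}B_1(x,v^x(\sigma;s_k,y))\,d\sigma$ with horizon $T'=\d/\e\to+\infty$, i.e.\ exactly the object estimated in Lemma \ref{teonl}.

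Next I would write the remainder as the $E$-valued time integral of $B_1(u_\e(s),v_\e(s))-\bar B(u_\e(s))$ (the factor $e^{(t-s)A_1}$ coming from the mild formulation, if kept, is harmless since it is a uniformly bounded analytic semigroup and commutes with all the estimates below), and for $t_m\le t<t_{m+1}$ decompose it over the full blocks plus a partial block of length $\le\d$. On each block $[t_k,t_{k+1}]$ I split the integrand into four pieces: (i) $B_1(u_\e(s),v_\e(s))-B_1(u_\e(t_k),v_\e(s))$, controlled by the local Lipschitz bound \eqref{5.5k} together with the time-Hölder estimate \eqref{fin1000} for $u_\e$; (ii) $B_1(u_\e(t_k),v_\e(s))-B_1(u_\e(t_k),\hat v_\e(s))$, controlled by \eqref{5.5k} and a dissipativity/Gronwall comparison of $v_\e$ with $\hat v_\e$ on the block that exploits the damping $-\a$ and the truncated Lipschitz constant $L_R$ of \eqref{new60}; (iii) $\frac1\d\int_{t_k}^{t_{k+1}}B_1(u_\e(t_k),\hat v_\e(s))\,ds-\bar B(u_\e(t_k))$, controlled directly by Lemma \ref{teonl} applied conditionally on $\mathcal F_{t_k}$ with $T=\d/\e$; and (iv) $\bar B(u_\e(t_k))-\bar B(u_\e(s))$, controlled by the local Lipschitz continuity of $\bar B$ from Lemma \ref{lemma5.2} and again by \eqref{fin1000}. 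Summing the block contributions, using the a priori bounds \eqref{stima2}, \eqref{new2}, \eqref{new6} and noting that the partial block contributes at most $c\,\d\,(1+\dots)$, one gets a bound of the schematic form $c_R\bigl(\d^{\gamma(\theta)}+\e/\d+T\,\e/\d+T\sup_{|x|_E\le R}\a(\d/\e,x)\bigr)$ on a suitable localized event.

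The main obstacle is exactly the polynomial growth of the coefficients, which prevents any of these estimates from being uniform in the size of $u_\e$ and $v_\e$: the a priori bounds only give moments, the comparison estimate in (ii) involves the $R$-dependent constant $L_R$, and the remainder $\a(T,x)$ in Lemma \ref{teonl} decays to $0$ only uniformly on compact subsets of $E$. I would therefore localize in space, replacing $u_\e$ and $v_\e$ by processes stopped on an event such as $\{\sup_{[0,T]}|u_\e(t)|_{C^\theta(\bar D)}\le R\}\cap\{\int_0^T|v_\e(t)|_E^p\,dt\le R\}$, whose complement has probability $\le c/R$ uniformly in $\e$ by Markov's inequality and \eqref{stima2}--\eqref{new6}. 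On that event one may replace $B_1$ and $\bar B$ by the truncated coefficients $B_{1,n}$ and the corresponding $\bar B_n$ (as in the proof of Lemma \ref{lemma4.1}), which are globally Lipschitz, and the measures $\mu^x_t$ by approximations compatible with the truncation — this is the ``suitable approximation for the family of measures'' alluded to after \eqref{limav-intro}. Letting $\e\to0$ at fixed $R$ (and $n=n(R)$) kills all the error terms, and then $R\to\infty$ concludes. A secondary technical point is the calibration of $\d_\e$: it must satisfy $\d_\e\to0$ and $\d_\e/\e\to\infty$, while the $v_\e$-versus-$\hat v_\e$ comparison error in (ii) — which a priori grows with the rescaled block length $\d_\e/\e$ but is tamed by the exponential stability \eqref{stimaeta}/\eqref{cl60} of the frozen fast dynamics — and the factor $T\,\e/\d_\e$ in (iii) must both still vanish; a choice like $\d_\e=\sqrt\e$ should work once the quantitative dependence on $\d_\e/\e$ is tracked.
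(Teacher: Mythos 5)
Your scheme is essentially the one the paper uses: Khasminskii discretization with an auxiliary process frozen at $u_\e(k\d_\e)$ on each block, the rescaling $\sigma=t/\e$ turning each block average into the ergodic-type average of Lemma \ref{teonl} with horizon $\d_\e/\e$, a truncation of the slow variable in the coefficients (the paper works with $B_{1,n}$, $B_{2,n}$, the truncated system $(u_{\e,n},v_{\e,n})$, the measures $\mu^{x,n}_t$ and the Lipschitz map $\bar{B}_n$, and reduces $R_\e$ to $R_{\e,n}$ at a cost $c_T/n$, rather than stopping on an event as you do --- a comparable bookkeeping choice), and a compactness argument to exploit that $\a(T,x)\to 0$ only uniformly on compact sets: the paper uses tightness of $\{u_{\e,n}(k\d_\e)\}$ and splits on $K_\eta$ and its complement via \eqref{fin1011}--\eqref{fin1010}; your use of the $C^\theta(\bar D)$-ball from \eqref{new6}, which is compact in $E$, plays the same role, but note that in your schematic bound the supremum must indeed be taken over such a compact set and not over the ball $B_E(R)$, for which \eqref{fin1010} gives nothing.

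The one step that would fail as written is your calibration of $\d_\e$, together with the claim that the block comparison (ii) between $v_\e$ and $\hat v_\e$ is ``tamed by the exponential stability'' of the frozen fast dynamics. Because the noise is multiplicative, the difference $\hat v_{\e,n}-v_{\e,n}$ is estimated (after the factorization argument, see \eqref{fin1006}) through a Gronwall inequality whose constant over one block is of order $\d_\e/\e$; the damping does not cancel the stochastic term, and the resulting bound is $c\,(1+|x|^{2m_2}_{C^\theta(\bar D)}+|y|_E^2)\,\d_\e^{2\gamma(\theta)}\exp(c\,\d_\e/\e)$, as in the proof of Lemma \ref{lemma63}. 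With your suggested mesh $\d_\e=\sqrt{\e}$ the factor $\exp(c/\sqrt{\e})$ beats every power of $\d_\e$ and the estimate diverges, so \eqref{new41} is lost. This is exactly why the paper takes $\d_\e=\e\log\e^{-\kappa}$ with $\kappa<2\gamma(\theta)/c$, so that $\exp(c\,\d_\e/\e)=\e^{-c\kappa}$ is only polynomially large; note also that the constant $c$ here already depends on the truncation level $n$ (through $L_R$ in \eqref{new60}), which is why the truncation must be performed before, not after, this step. With that choice of $\d_\e$ and the truncation in place, your decomposition (i)--(iv) goes through and coincides with the paper's argument.
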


\subsection{Proof of Lemma \ref{new62}}

For any $n \in\,\nat$, we define
\[b_{1,n}(\xi,\si_1,\si_2):=\begin{cases}
\ds{b_{1}(\xi,\si_1,\si_2), } &  \ds{\text{if}\ |\si_1|\leq n,}\\
\\
\ds{b_1(\xi,\si_1 n/|\si_1|,\si_2),} &  \ds{\text{if}\ |\si_1|> n,}
\end{cases},\]
and
\[b_{2,n}(t,\xi,\si_1,\si_2):=\begin{cases}
\ds{b_{2}(t,\xi,\si_1,\si_2), } &  \ds{\text{if}\ |\si_1|\leq n,}\\
\\
\ds{b_2(t,\xi,\si_1 n/|\si_1|,\si_2),} &  \ds{\text{if}\ |\si_1|> n,}
\end{cases},\]

In correspondence toForcorresponding composition operator. We have 
\begin{equation}
\label{new66}
x \in\,B_E(n)\Longrightarrow B_{1,n}(x,y)=B_1(x,y),\ \ \ B_{2,n}(t,x,y)=B_2(t,x,y),
\end{equation}
for every $t \in\,\reals$ and $y \in\,E$.
Notice that the mappings $b_{1,n}$  and $b_{2,n}$ satisfy all conditions in Hypotheses \ref{H2} and \ref{H2bis}, respectively.
For any fixed $t \in\,\reals$, $\xi \in\,\bar{D}$ and $\si_2 \in\,\reals$, the mappings $b_{1,n}(\xi,\cdot,\si_2)$ and $b_{2,n}(t,\xi,\cdot,\si_2)$ are Lipschitz continuous and,  in view of \eqref{new60},
\begin{equation}
\label{new65}
\sup_{\substack{(t,\xi) \in\,\reals\times\bar{D}\\\si_2 \in\,\reals}}|b_{2,n}(t,\xi,\si_1,\si_2)-b_{2,n}(t,\xi,\rho_1,\si_2)|\leq c_n\,|\si_1-\rho_1|,\ \ \ \ \si_1, \rho_1 \in\,\reals.
\end{equation}

Moreover, for any $n \in\,\nat$ we define
\[g_{1,n}(\xi,\si_1):=\begin{cases}
\ds{g_{1}(\xi,\si_1), } &  \ds{\text{if}\ |\si_1|\leq n,}\\
\\
\ds{g_1(\xi,\si_1 n/|\si_1|),} &  \ds{\text{if}\ |\si_1|> n.}
\end{cases}\]
The corresponding composition/multiplication operator is denoted by $G_{1,n}$.

Now, for any $n \in\,\nat$ we introduce the system
\begin{equation}
\label{astrattoEoryn}
 \le\{\begin{array}{l} \ds{du(t)=\le[A_1
u(t)+B_{1,n}(u(t),v(t))\r]\,dt+G_{1,n}(u(t))\,dw^{Q_1}(t),}\\
\vs \ds{dv(t)=\frac 1\e \le[(A_2(t/\e)-\a)
v(t)+B_{2,n}(t/\e,u(t),v(t))\r]\,dt+\frac 1{\sqrt{\e}}\,
G_{2}(t/\e,v(t))\,dw^{Q_2}(t),}
\end{array}\r.
\end{equation}
with initial conditions $u(s)=x$ and $v(s)=y$.
We denote by $z_{\e,n}=(u_{\e,n},v_{\e,n})$  its solution.

\medskip
Next, for any $n \in\,\nat$  we introduce the problem
\begin{equation}
\label{fastn} dv(t)=\le[(A_2(t)-\a)
v(t)+B_{2,n}(t,x,v(t))\r]\,dt+G_{2}(t,v(t))\,dw^{Q_2}(t),\ \ \ \ v(s)=y,
\end{equation}
whose solution will be denoted by $v^{x}_n(t;s,y)$.
Thanks to \eqref{new66}, for any $t\geq 0$ we have
\begin{equation}
\label{new68}
v^{x}_n(t;s,y)=\begin{cases}
\ds{v^{x}(t;s,y), } & \ds{ \text{if}\ |x|_E\leq n,}\\
\\
\ds{v^{x_n}(t;s,y),}  &  \ds{\text{if}\ |x|_E>n,}
\end{cases}\end{equation}
where
\[x_n(\xi):=\begin{cases}
\ds{x(\xi)}  &  \ds{\text{if}\ |x(\xi)|<n}\\
\\
\ds{n\,\text{sign}\,x(\xi),}  &  \ds{\text{if}\ |x(\xi)|\geq n.}
\end{cases}\]
This implies that for each $n \in\,\nat$ and $x \in\,E$ there exists an evolution of measures family $\{\mu^{x,n}_t\}_{t \in\,\reals}$ for equation \eqref{fastn} and $\mu^{x,n}_t$ is given by
\[\mu^{x,n}_t=\begin{cases}
\ds{\mu^x_t, } & \ds{ \text{if}\ |x|_E\leq n,}\\
\\
\ds{\mu^{x_n}_t,}  &  \ds{\text{if}\ |x|_E>n.}
\end{cases}.\]
Moreover, due to \eqref{stima8}, for any  $p\geq 1$  we have
\begin{equation}
\label{new70}
\E\,|v^x_n(t;s,y)|_E^p\leq c_{p,n}\le(1+e^{-\d p (t-s)}|y|_E^p\r),\ \ \ \ \ t>s.
\end{equation}

As all coefficients in equation \eqref{fastn} satisfy the same conditions fulfilled by the coefficients of equation \eqref{fast-R}, we have that a result analogous to Lemma \ref{teonl} holds. More precisely, if we define
\[\bar{B}_{n}(x)=\lim_{T\to \infty}\frac 1T\int_0^T\int_E B_{1,n}(x,y)\,\mu^{x,n}_t(dy)\,dt,\]
we have that
\begin{equation}
\label{new68}
\E\,\le|\frac 1T\int_s^{s+T}B_{1,n}(x,v_n^{x}(t;s,y))\,ds-\bar{B}_n(x)\r|_E^2\leq \frac cT\le(1+|x|_E^{\kappa_1}+|y|_E^{\kappa_2}\r)+\a(T,x),
\end{equation}
for some mapping $\a:(0,+\infty)\times E\to [0,+\infty)$ such that 
\begin{equation}
\label{fin1011}
\sup_{T>0}\a(T,x)\leq c(1+|x|_E^{m_1}),\ \ \ \ x \in\,E,
\end{equation}
and
\begin{equation}
\label{fin1010}
\lim_{T\to\infty}\,\sup_{x \in\,K}\a(T,x)=0,
\end{equation}
for every compact set $K\subset E$.
Notice that 
\[|x|_E\leq n\Longrightarrow \bar{B}_n(x)=\bar{B}(x).\]

\begin{Lemma}
The mapping $\bar{B}_{n}:E\to E$ is Lipschitz-continuous.
\end{Lemma}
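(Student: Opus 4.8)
The plan is to show that $\bar{B}_n$ factors as $\bar{B}\circ\pi_n$, where $\pi_n:E\to E$ is the pointwise truncation operator $(\pi_n x)(\xi)=\tau_n(x(\xi))$, with $\tau_n(\sigma)=\sigma$ for $|\sigma|\leq n$ and $\tau_n(\sigma)=n\,\sigma/|\sigma|$ otherwise. Lipschitz continuity of $\bar{B}_n$ then follows at once from the local Lipschitz continuity of $\bar{B}$ established in Lemma \ref{lemma5.2}, together with the fact that $\pi_n$ is a $1$-Lipschitz map of $E$ into the ball $\{z\in E:|z|_E\leq n\}$.

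First I would record that $\tau_n$ is $1$-Lipschitz on $\reals$ and satisfies $\tau_n\circ\tau_n=\tau_n$, so that $\pi_n$ maps $E$ into $\{|z|_E\leq n\}$ and $|\pi_n x_1-\pi_n x_2|_E\leq|x_1-x_2|_E$ for all $x_1,x_2\in E$. Then comes the heart of the argument. From the definitions of $b_{1,n}$ and $b_{2,n}$, both of which truncate only the slow argument $\sigma_1$, one has $B_{1,n}(x,\cdot)=B_{1,n}(\pi_n x,\cdot)$ and $B_{2,n}(t,x,\cdot)=B_{2,n}(t,\pi_n x,\cdot)$ for every $t\in\reals$ and $x\in E$, precisely because $\tau_n\circ\tau_n=\tau_n$. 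Hence equation \eqref{fastn} written with frozen slow component $x$ is literally the same equation as the one written with frozen slow component $\pi_n x$; consequently $v^x_n(\cdot;s,y)=v^{\pi_n x}_n(\cdot;s,y)$ and the corresponding evolution families of measures satisfy $\mu^{x,n}_t=\mu^{\pi_n x,n}_t$ for all $t$. Averaging in $t$, and using $B_{1,n}(x,\cdot)=B_{1,n}(\pi_n x,\cdot)$ again in the integrand, gives $\bar{B}_n(x)=\bar{B}_n(\pi_n x)$; since $|\pi_n x|_E\leq n$, the already-noted identities $B_{1,n}(z,\cdot)=B_1(z,\cdot)$ for $|z|_E\leq n$ (see \eqref{new66}) and $\bar{B}_n(z)=\bar{B}(z)$ for $|z|_E\leq n$ (noted just before the statement) yield $\bar{B}_n(x)=\bar{B}(\pi_n x)$, the desired factorization.

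Finally I would invoke Lemma \ref{lemma5.2}, which provides, for each $n$, a constant $c_n>0$ with $|\bar{B}(z_1)-\bar{B}(z_2)|_E\leq c_n|z_1-z_2|_E$ whenever $|z_1|_E,|z_2|_E\leq n$, whence
\[
|\bar{B}_n(x_1)-\bar{B}_n(x_2)|_E=|\bar{B}(\pi_n x_1)-\bar{B}(\pi_n x_2)|_E\leq c_n\,|\pi_n x_1-\pi_n x_2|_E\leq c_n\,|x_1-x_2|_E
\]
for all $x_1,x_2\in E$, so that $\bar{B}_n:E\to E$ is Lipschitz continuous. I do not expect a genuine obstacle; the only point needing care is the bookkeeping behind the factorization $\bar{B}_n=\bar{B}\circ\pi_n$, that is, verifying that truncating the frozen slow datum pointwise alters neither equation \eqref{fastn} nor its evolution family $\{\mu^{x,n}_t\}_{t\in\reals}$. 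As an alternative route one could simply rerun the estimates from the proof of Lemma \ref{lemma5.2} with $B_{1,n}$ in place of $B_1$: since $\eta^{x,n}(t)=\eta^{\pi_n x}(t)$ with $\pi_n x$ in the ball of radius $n$, the moment bound \eqref{cl75} and Proposition \ref{Prop3} applied with $R=n$ force all the constants there to depend only on $n$, turning the local Lipschitz estimate into a global one.
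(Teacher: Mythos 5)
Your proof is correct, but it takes a genuinely different route from the paper. You reduce the statement to the factorization $\bar{B}_n=\bar{B}\circ\pi_n$: since $b_{1,n}$ and $b_{2,n}$ truncate only the slow argument and the truncation is idempotent, equation \eqref{fastn} with frozen datum $x$ coincides with equation \eqref{fast} with frozen datum $\pi_n x$, so that $v^x_n=v^{\pi_n x}$ and $\mu^{x,n}_t=\mu^{\pi_n x}_t$ (this is exactly what the paper records in \eqref{new68} and the display after it), and then the global Lipschitz bound follows from the local Lipschitz continuity of $\bar{B}$ in Lemma \ref{lemma5.2} applied on the ball of radius $n$, together with the $1$-Lipschitzness of $\pi_n$ on $E$. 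The paper instead re-runs the estimates directly for the truncated system: it splits $B_{1,n}(x_1,\eta^{x_1}_n(t))-B_{1,n}(x_2,\eta^{x_2}_n(t))$ using the Lipschitz continuity of $b_{1,n}$ in the slow variable (constant $c_n$, from \eqref{new65} and the analogous property of $b_{1,n}$), uses the moment bound $\sup_{t}\E|\eta^x_n(t)|_E^p\leq c_{p,n}$, uniform in $x$ thanks to \eqref{new70}, and adapts the proof of Proposition \ref{Prop3} to get $\sup_t\E|\eta^{x_1}_n(t)-\eta^{x_2}_n(t)|_E^2\leq c_n|x_1-x_2|_E^2$ for all $x_1,x_2\in E$, the point being that the truncation makes that estimate global rather than local. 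Your argument is shorter and avoids redoing any stochastic estimates, at the cost of relying on the identification of the truncated fast dynamics with the original one at the truncated datum (which the paper has already stated, so nothing is missing); the paper's computation, on the other hand, exhibits explicitly how the constants come to depend only on $n$ and does not need the identity $\bar{B}_n(z)=\bar{B}(z)$ for $|z|_E\leq n$. The only bookkeeping point in your version is that $\pi_n x$ may have norm exactly $n$, so one should quote Lemma \ref{lemma5.2} with radius $n+1$ (or note that the closed ball works as well); this is immaterial.
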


\begin{proof}
Due to \eqref{nl1200}, for every $t \in\,\reals$ and $x_1, x_2 \in\,E$ we have
\[\begin{array}{l}
\ds{|B_{1,n}(x_1,\eta^{x_1}_n(t))-B_{1,n}(x_2,\eta^{x_2}_n(t))|_E}\\
\vs
\ds{\leq c_n\,|x_1-x_2|_E+|B_{1,n}(x_2,\eta^{x_1}_n(t))-B_{1,n}(x_2,\eta^{x_2}_n(t))|_E}\\
\vs
\ds{\leq c_c\,|x_1-x_2|_E+c_n \le(1+|\eta^{x_1}_n(t)|_E^\theta+|\eta^{x_2}_n(t)|_E^\theta\r)|\eta^{x_1}_n(t)-\eta^{x_2}_n(t)|_E.}
\end{array}\]
Due to \eqref{new70}, we have
\begin{equation}
\label{cl300}
\sup_{t \in\,\reals} \E\,|\eta_n^x(t)|_E^p=:c_{p,n}<\infty,
\end{equation}
and this implies 
\[\begin{array}{l}
\ds{\le|\int_E B_{1,n}(x_1,y)\,\mu_t^{n,x_1}(dy)-\int_E B_{1,n}(x_2,y)\,\mu_t^{n,x_2}(dy)\r|_E}\\
\vs
\ds{\leq \E\,\le|B_{1,n}(x_1,\eta^{x_1}_n(t))-B_{1,n}(x_2,\eta^{x_2}_n(t))\r|_E\leq c_n\,|x_1-x_2|_E+c_n\le(\E\le|\eta^{x_1}_n(t)-\eta^{x_2}_n(t)\r|_E^2\r)^{\frac 12}.}
\end{array}\]
Now, if we adapt the proof of Proposition \ref{Prop3} to the present situation, we can easily see that
\[\sup_{t \in\,\reals}\E\le|\eta^{x_1}_n(t)-\eta^{x_2}_n(t)\r|_E^2\leq c_n\,|x_1-x_2|_E^2,\]
and this allows us to conclude that
\[\sup_{t \in\,\reals}\le|\int_E B_{1,n}(x_1,y)\,\mu_t^{n,x_1}(dy)-\int_E B_{1,n}(x_2,y)\,\mu_t^{n,x_2}(dy)\r|_E\leq c_n\,|x_1-x_2|_E,\]
which implies the Lipschitz continuity of $\bar{B}_{n}$.
\end{proof}

\bigskip

As in \cite{av2} and \cite{pol}, we prove the validity of Lemma \ref{new62} by using  Khasminskii's approach based on time discretization, as introduced in \cite{khas}.

To this purpose, for any $\e>0$ we divide the interval $[0,T]$ in subintervals of size $\d_\e>0$, for some constant $\d_\e>0$ to be  determined, and we introduce the auxiliary fast motion $\hat{v}_{\e,n}$ defined in each time interval $[k\d_\e, (k+1)\d_\e]$, for $k=0,1,\ldots,[T/\d_\e]$, as the solution of the problem
\begin{equation}
\label{new40}
\left\{
\begin{array}{l}
\ds{
dv(t)=\frac 1\e\le[(A_2(t/\e)-\a) v(t)+B_{2,n}(t/\e,u_{\e,n}(k\d_\e),v(t))\r]\,dt+
\frac 1{\sqrt{\e}}\,G_{2}(t/\e,v(t))\,dw^{Q_2}(t),}\\
\vs
\ds{v(k\d_\e)=v_{\e,n}(k\d_\e).}
\end{array}\r.
\end{equation}
Notice that, due to the way $\hat{v}_{\e,n}$ has been defined, we have that an estimate analogous to \eqref{new2} holds, that is for any $p\geq 1$
\begin{equation}
\label{new43}
\int_0^T\E\,|\hat{v}_{\e,n}(t)|_E^p\,dt\leq c_{p,T}\le(1+|x|_E^p+|y|_E^p\r).
\end{equation}

As in \cite{khas} and \cite{av2}, we want to prove the following approximation result.

\begin{Lemma}
\label{lemma63}
Assume Hypotheses \ref{H0} to \ref{H3} and fix $x \in\,C^\theta(\bar{D})$ and $y \in\,E$. Then, there exists a constant $\kappa>0$ such that if
\[\d_\e=\e\,\log \e^{-\kappa},\]
then for any fixed $n \in\,\nat$ 
\begin{equation}
\label{new41}
\lim_{\e\to 0}\sup_{t \in\,[0,T]}\E|\hat{v}_{\e,n}(t)-v_{\e,n}(t)|_H^2=0.
\end{equation}
\end{Lemma}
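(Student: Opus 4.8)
The plan is to run, on each subinterval $[k\d_\e,(k+1)\d_\e]$, an a priori estimate for the difference $\hat v_{\e,n}-v_{\e,n}$ of the type used in the proofs of Propositions~\ref{prop1} and \ref{prop2}, and then to feed in the H\"older continuity in time \eqref{fin1000} of the slow motion. Fix $n\in\nat$; for $k=0,\dots,[T/\d_\e]$ and $t\in[k\d_\e,(k+1)\d_\e]$ set $\rho(t):=\hat v_{\e,n}(t)-v_{\e,n}(t)$, so $\rho(k\d_\e)=0$ and, subtracting the fast component of \eqref{astrattoEoryn} from \eqref{new40},
\[
\le\{\barr{l}
\ds{d\rho(t)=\frac1\e\le[(A_2(t/\e)-\a)\rho(t)+I(t)+II(t)\r]dt+\frac1{\sqrt\e}\,\Delta G(t)\,dw^{Q_2}(t),}\\
\ds{\rho(k\d_\e)=0,}
\earr\r.
\]
where $I(t)=B_{2,n}(t/\e,u_{\e,n}(k\d_\e),\hat v_{\e,n}(t))-B_{2,n}(t/\e,u_{\e,n}(t),\hat v_{\e,n}(t))$, $II(t)=B_{2,n}(t/\e,u_{\e,n}(t),\hat v_{\e,n}(t))-B_{2,n}(t/\e,u_{\e,n}(t),v_{\e,n}(t))$, and $\Delta G(t)=G_2(t/\e,\hat v_{\e,n}(t))-G_2(t/\e,v_{\e,n}(t))$. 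Since $b_{1,n},b_{2,n},g_{1,n}$ satisfy the same hypotheses as $b_1,b_2,g_1$, the increment $I(t)$ obeys $|I(t)|_E\leq c_n\,|u_{\e,n}(t)-u_{\e,n}(k\d_\e)|_E$ by \eqref{new65}, the increment $II(t)$ is monotone in the fast variable (the truncation in $b_{2,n}$ affects only the slow variable, so \eqref{nl102} and hence \eqref{5.5E2} carry over), and $\Delta G(t)$ is multiplication by a function bounded pointwise by $L_{g_2}\,|\rho(t)(\cdot)|$.

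Because the noise is multiplicative and only colored in space, $G_2(\cdot)Q_2$ need not be Hilbert--Schmidt, so It\^o's formula cannot be applied to $|\rho(t)|_H^2$ directly; I would instead argue as in the proof of Proposition~\ref{prop2}. Subtract $\La_\e$, the mild solution of $dz(t)=\frac1\e(A_2(t/\e)-\a)z(t)\,dt+\e^{-1/2}\,\Delta G(t)\,dw^{Q_2}(t)$ with $z(k\d_\e)=0$, whose $E$-norm obeys, by the fast-time ($t\mapsto t/\e$) analogues of \eqref{new15bis}--\eqref{new16}, a contraction bound by $\sup_r\E\,|\rho(r)|_E^p$ with constant $<1$ when $\a$ is large and/or $L_{g_2}$ small; then $\psi:=\rho-\La_\e$ solves a random PDE, and one estimates $\frac{d^-}{dt}|\psi(t)|_E$ along $\d\in\mathcal{M}_{\psi(t)}$, combining the $E$-dissipativity $\le<(A_2(t/\e)-\a)\psi,\d\r>_E\leq-\a|\psi|_E$ (maximum principle and \eqref{gamma}), the monotonicity \eqref{5.5E2} applied to the fast-variable increment of $B_{2,n}$ between $\hat v_{\e,n}$ and $v_{\e,n}+\La_\e$ (which pairs to $\leq 0$), the bound $|I(t)|_E\leq c_n|u_{\e,n}(t)-u_{\e,n}(k\d_\e)|_E$, and the moment estimates \eqref{new2}, \eqref{new43}, \eqref{stima8} (plus the stochastic-convolution bounds) to control the residual source terms carrying $\La_\e$ and $v_{\e,n}$, which are then absorbed into the left side via the contraction. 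Once $\a$ is large and/or $L_{g_2}$ small so that the dissipativity dominates, one obtains, with $c_n$ uniform in $k$ and some $\beta>0$,
\[
\sup_{t\in[k\d_\e,(k+1)\d_\e]}\E\,|\rho(t)|_E^2\leq c_n\Big(\sup_{t}\frac1\e\int_{k\d_\e}^{t}e^{-\frac{2\beta}\e(t-r)}dr\Big)\sup_{r\in[k\d_\e,(k+1)\d_\e]}\E\,|u_{\e,n}(r)-u_{\e,n}(k\d_\e)|_E^2\leq\frac{c_n}{2\beta}\sup_{r\in[k\d_\e,(k+1)\d_\e]}\E\,|u_{\e,n}(r)-u_{\e,n}(k\d_\e)|_E^2.
\]
This a priori estimate is the step I expect to be the main obstacle: the singular factor $1/\e$ together with the state-dependent colored noise rules out a plain computation in $H$ and forces the detour through the $E$-valued stochastic convolution estimates and the monotonicity/absorption argument; what makes it work is that the kernel $\e^{-1}e^{-2\beta(t-r)/\e}$ integrates to the $\e$-independent constant $1/(2\beta)$, so the bound collapses onto the time modulus of continuity of $u_{\e,n}$ over a window of length $\d_\e$.

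For the conclusion, $x\in C^\theta(\bar D)$ with $\theta>0$, so \eqref{fin1000} applies to $u_{\e,n}$ and, with $|r-k\d_\e|\leq\d_\e$, gives $\E\,|u_{\e,n}(r)-u_{\e,n}(k\d_\e)|_E^2\leq c_n\big(1+|x|_{C^\theta(\bar D)}^{2m_1}+|y|_E^2\big)\,\d_\e^{2\gamma(\theta)}$ uniformly in $k$. Inserting this in the previous display and using $|\cdot|_H\leq|D|^{1/2}|\cdot|_E$ gives $\sup_{t\in[0,T]}\E\,|\hat v_{\e,n}(t)-v_{\e,n}(t)|_H^2\leq c_{n,x,y}\,\d_\e^{2\gamma(\theta)}$; since $\d_\e=\e\log\e^{-\kappa}\to0$ as $\e\to0$, \eqref{new41} follows for every $\kappa>0$. (Any $\d_\e\to0$ would suffice for this lemma; the logarithmic rate is fixed only because the subsequent comparison of $\hat v_{\e,n}$ with the frozen-slow fast motion requires $\d_\e/\e=\kappa\log\e^{-1}\to\infty$.)
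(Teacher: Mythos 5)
Your overall scheme (localize on the windows $[k\d_\e,(k+1)\d_\e]$, subtract a stochastic convolution, use dissipativity in $E$ plus the H\"older bound \eqref{fin1000}) is the right family of ideas, but the central display of your argument --- the claim that $\sup_t\E|\rho(t)|_E^2$ is bounded by a constant times $\sup_r\E|u_{\e,n}(r)-u_{\e,n}(k\d_\e)|_E^2$, uniformly in $\e$, with everything collapsing onto the kernel $\e^{-1}e^{-2\beta(t-r)/\e}$ --- is not justified, and this is exactly where the real difficulty of the lemma sits. After you subtract $\La_\e$ and use \eqref{5.5E2} for the increment of $B_{2,n}$ between $\hat v_{\e,n}$ and $v_{\e,n}+\La_\e$, you are left with the residual term $B_{2,n}(t/\e,u_{\e,n}(t),v_{\e,n}(t)+\La_\e(t))-B_{2,n}(t/\e,u_{\e,n}(t),v_{\e,n}(t))$. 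The truncation in $b_{2,n}$ acts only on the slow variable, so in the fast variable $b_{2,n}$ is still of polynomial growth $m_2$ and only locally Lipschitz: by \eqref{nl102} this residual equals $-\la(t/\e,\cdot,\ldots)\La_\e(t)$ with $\la\geq 0$ \emph{unbounded} (think of $h(s)=-\beta s^{2k+1}+\ldots$). Hence it cannot be bounded by $c\,|\La_\e(t)|_E$ and "absorbed via the contraction"; controlling it through moments would require pointwise-in-time bounds on quantities like $\E\le[(1+|v_{\e,n}(t)|_E^{m_2-1})|\La_\e(t)|_E\r]$, whereas \eqref{new2} and \eqref{new43} only give time-integrated moments of $v_{\e,n}$ and $\hat v_{\e,n}$, and even then the resulting constant would not be small. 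The paper's proof avoids any moment of $\la$ by keeping it inside the exponential weight $\exp\le(-\frac1\e\int_s^t\la_{\e,n}\r)\la_{\e,n}(s)$, whose $ds$-integral is at most $1$; but the price is that only $\sup_{s}|\La_{\e,n}(s)|_E$ (a pathwise sup, not $\sup_s\E$) survives, and the factorization bound \eqref{fin1006} for $\E\sup_s|\La_{\e,n}(s)|_E^2$ costs a factor $\frac1\e\int_{k\d_\e}^t$, i.e.\ effectively $\d_\e/\e$, which is not small.

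This is why the Gronwall step in the paper produces the factor $\exp(c\,\d_\e/\e)=\e^{-c\kappa}$ and why the lemma asserts the existence of a suitable $\kappa$ (one needs $\kappa<2\gamma(\theta)/c$ so that $\d_\e^{2\gamma(\theta)}$ beats $\e^{-c\kappa}$), rather than the statement you arrive at, namely that \eqref{new41} holds for every $\kappa>0$ and indeed for any $\d_\e\to0$. That stronger conclusion is an artifact of the unjustified absorption step, not something your argument establishes. A secondary point: your contraction step invokes ``$\a$ large and/or $L_{g_2}$ small,'' which is not among the hypotheses of this lemma (it assumes only Hypotheses \ref{H0}--\ref{H3}); the paper's proof deliberately does not need it here, precisely because it never tries to make the stochastic-convolution contribution a small multiple of $\sup\E|\rho|^2$. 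To repair your proof you would either have to keep the unbounded factor $\la_{\e,n}$ inside the exponential as the paper does (and then face $\E\sup_s|\La_\e(s)|^2$, the factorization bound, and the constraint on $\kappa$), or supply uniform-in-$\e$ pointwise-in-time moment bounds for $v_{\e,n}$ in $E$ together with a genuinely small constant in front of the residual term --- neither of which is provided in the proposal.
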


\begin{proof}
Let $\e>0$ and $n \in\,\nat$ be fixed. For $k=0,\ldots,[T/\d_\e]$ and $t \in\,[k\d_\e,(k+1)\d_\e]$, let $\La_{\e,n}(t)$ be the solution of the problem
\[d\La_{\e,n}(t)=\frac 1\e\,(A_2-\a)\La_{\e,n}(t)\,dt+\frac 1{\sqrt{\e}}\,K_{\e,n}(t)\,dw^{Q_2},\ \ \ \ \La_{\e,n}(k\d_\e)=0,\]
where
\[K_{\e,n}(t):=G_{2,n}(t/\e,\hat{v}_{\e,n}(t))-G_{2,n}(t/\e,{v}_{\e,n}(t)).\]
Notice that, with the notations of Section \ref{sec2}, we can write
\begin{equation}
\label{fin1007}
\La_{\e,n}(t)=\psi_{\a,\e}(\La_{\e,n};k\d_\e)(t)+\Gamma_{\e,n}(t),\ \ \ \ \ t \in\,[k\d_\e,(k+1)\d_\e],
\end{equation}
where
\[\Gamma_{\e,n}(t)=\frac 1{\sqrt{\e}}\int_{k\d_\e}^t U_{\a,\e}(t,r)K_{\e,n}(r)\,dw^{Q_2}(r).\]
If  we define
$\rho_{\e,n}(t):=\hat{v}_{\e,n}(t)-v_{\e,n}(t)$ and 
$z_{\e,n}(t):=\rho_{\e,n}(t)-\La_{\e,n}(t)$, we have
\[d z_{\e,n}(t)=\frac 1\e\le[(A_2(t/\e)-\a) z_{\e,n}(t)+H_{\e,n}(t)\r]\,dt,\ \ \ \ \ z_{\e,n}(k\d_\e)=0,\]
where, in view of \eqref{nl102}
\[\begin{array}{l}
\ds{H_{\e,n}(t):=B_{2,n}(t/\e,u_{\e,n}(k\d_\e),\hat{v}_{\e,n}(t))-B_{2,n}(t/\e,u_{\e,n}(t),{v}_{\e,n}(t))}\\
\vs
\ds{=B_{2,n}(t/\e,u_{\e,n}(k\d_\e),\hat{v}_{\e,n}(t))-B_{2,n}(t/\e,u_{\e,n}(t),\hat{v}_{\e,n}(t))}\\
\vs
\ds{-\la(t/\e,\cdot,u_{\e,n}(t),\hat{v}_{\e,n}(t),{v}_{\e,n}(t))(z_{\e,n}(t)+\La_{\e,n}(t)). }
\end{array}\]
By proceeding as in \cite[proof of Lemma 6.2]{pol}, we have 
\[\begin{array}{l}
\ds{|z_{\e,n}(t)|_E\leq \frac{c_n}\e\int_{k\d_\e}^t e^{-\frac \a \e (t-s)}|u_{\e,n}(k\d_\e)-u_{\e,n}(s)|_E\,ds}\\
\vs
\ds{+\frac 1\e \int_{k\d_\e}^t\exp\le(-\frac 1\e\int_s^t \la_{\e,n}(r)\,dr\r)\la_{\e,n}(s)\,|\La_{\e,n}(s)|_E\,ds.}
\end{array}\]
where
\[\la_{\e,n}(t):=\la(t/\e,\xi_{\e,n}(t),u_{\e,n}(t,\xi_{\e,n}(t)),\hat{v}_{\e,n}(t,\xi_{\e,n}(t)),{v}_{\e,n}(t,\xi_{\e,n}(t))),\]
and $\xi_{\e,n}(t)$ is a point in $\bar{D}$ such that
\[|z_{\e,n}(t,\xi_{\e,n}(t))|=|z_{\e,n}(t)|_E.\]

Now, it is not difficult to check that an estimate analogous to \eqref{fin1000} is also valid for $u_{\e,n}$. Therefore, 
 we get
\begin{equation}
\label{new74}
\begin{array}{l}
\ds{\E\,|\hat{v}_{\e,n}(t)-v_{\e,n}(t)|_E^2 \leq c_p\,\E\,|\La_{\e,n}(t)|_E^2+c_{n}\le(  1+|x|_{C^\theta(\bar{D})}^{2 m_2}+|y|_E^2\r)\d_\e^{\gamma(\theta) 2}}\\
\vs
\ds{+c\,\E\sup_{s \in\,[k \d_\e,t]}|\La_{\e,n}(s)|_E^2\le(\frac 1\e \int_{k\d_\e}^t\exp\le(-\frac 1\e\int_s^t \la_{\e,n}(r)\,dr\r)\la_{\e,n}(s)\,ds\r)^2}\\
\vs
\ds{\leq c_{n}\le(  1+|x|_{C^\theta(\bar{D})}^{2 m_2}+|y|_E^2\r)\d_\e^{\gamma(\theta) 2}+c\,\E\sup_{s \in\,[k \d_\e,t]}|\La_{\e,n}(s)|_E^2.}
\end{array}
\end{equation}

Since for any $\a\geq 0$ and $\e>0$ we have
\[U_{\a,\e}(t,s)=U_{\a,\e}(t,r) U_{\a,\e}(r,s),\ \ \ \ s<r<t,\]
the usual factorization argument used in the autonomous case can be used also here, so that
 for $s \in\,[k\d_\e,(k+1)\d_\e]$ and $\eta \in\,(0,1)$ we have
\[\Gamma_{\e,n}(s)=\frac{\sin \pi \eta}{\pi}\frac 1{\sqrt{\e}}\int_{k\d_\e}^s(s-r)^{\eta-1}U_{\a,\e}(s,r)\,Y_{\eta,\e,n}(r)\,dr,\]
where
\[Y_{\eta,\e,n}(r)=\int_{k\d_\e}^r(r-\rho)^{-\eta}U_{\a,\e}(r,\rho)\,K_{\e,n}(\rho)\,dw^{Q_2}(\rho).\]

Therefore, by proceeding as in the proof of \cite[Lemma 6.2]{pol}, we have
\begin{equation}
\label{fin1006}
\begin{array}{l}
\ds{\E\sup_{s \in\,[k\d_\e,t]}|\Gamma_{\e,n}(s)|_E^2\leq + c_{\eta}\frac 1\e
\int_{k\d_\e}^t\E\,|\hat{v}_{\e,n}(s)-v_{\e,n}(s)|_E^2\,ds.
}
\end{array}\end{equation}
Thanks to \eqref{fin1007} and \eqref{fin1006}, this implies 
\[\E\sup_{s \in\,[k\d_\e,t]}|\La_{\e,n}(s)|_E^2\leq + c_{\eta}\frac 1\e
\int_{k\d_\e}^t\E\,|\hat{v}_{\e,n}(s)-v_{\e,n}(s)|_E^2\,ds,\]
so that, thanks to \eqref{new74}, for $t \in\,[k\d_\e,(k+1)\d_\e]$
\[\begin{array}{l}
\ds{\E\,|\hat{v}_{\e,n}(t)-v_{\e,n}(t)|_E^2\leq c_{\eta}\le(1+|x|_{C^\theta(\bar{D})}^{2 m_2}+|y|_E^2\r)\,\d_\e^{\gamma(\theta)2}+\frac {c}\e
\int_{k\d_\e}^t\E\,|\hat{v}_{\e,n}(s)-v_{\e,n}(s)|_E^2\,ds.}
\end{array}\]
From the Gronwall Lemma, this gives
\[\E\,|\hat{v}_{\e,n}(t)-v_{\e,n}(t)|_E^2\leq c_{\eta}\le(1+|x|_{C^\theta(\bar{D})}^{2 m_2}+|y|_E^2\r)\,\d_\e^{\gamma(\theta)2}\exp\le(\frac{c\,\d_\e}\e\r).\]
Now, since 
\[\exp\le(\frac{c\,\d_\e}\e\r)=\exp\le(c\,\log \e^{-\kappa}\r)=\e^{-c\,\kappa},\]
we have
\[\d_\e^{\gamma(\theta)2}\exp\le(\frac{c\,\d_\e}\e\r)=\d_\e^{\gamma(\theta)2}\e^{-c\,\kappa}=\e^{-c\,\kappa+2\gamma(\theta)}\le(\log \e^{-\kappa}\r)^{2\gamma(\theta)}.\]
Hence, if we take $\kappa<2\,\gamma(\theta)/c$, we have \eqref{new41}.

\end{proof}

Finally, we can prove \eqref{re}. As in \cite{pol}, we can show that
for any $n \in\,\nat$
\[\E\,\sup_{t \in\,[0,T]}|R_\e(t)|\leq \E\le(\sup_{t \in\,[0,T]}|R_{\e,n}(t)|\r)+\frac{c_T}n \le(1+|x|_E^{2 m_1}+|y|_E^2\r)|h|_E.\]
Therefore, due to the arbitrariness of $n \in\,\nat$, \eqref{re} follows once we prove that for any fixed $n \in\,\nat$
\begin{equation}
\label{new81}
\lim_{\e\to 0}\E\le(\sup_{t \in\,[0,T]}|R_{\e,n}(t)|\r)=0.
\end{equation}

We have 
\begin{equation}
\label{new84}\begin{array}{l}
\ds{\limsup_{\e\to 0}\,\E\sup_{t \in\,[0,T]}\le|\int_0^t\le<B_{1,n}(u_{\e,n}(s),v_{\e,n}(s))-\bar{B}_n(u_{\e,n}(s)),h\r>_H\,ds\r|}\\
\vs
\ds{\leq \limsup_{\e\to 0}\,\E\int_0^T\le|\le<B_{1,n}(u_{\e,n}(s),v_{\e,n}(s))-B_{1,n}(u_{\e,n}([s/\d_\e]\d_\e),\hat{v}_{\e,n}(s)),h\r>_H\r|\,ds}\\
\vs
\ds{+\limsup_{\e\to 0}\,\E\sup_{t \in\,[0,T]}\le|\int_0^t\le<B_{1,n}(u_{\e,n}([s/\d_\e]\d_\e),\hat{v}_{\e,n}(s))-\bar{B}_n(u_{\e,n}(s)),h\r>_H\,ds\r|.}
\end{array}\end{equation}

As in \cite[proof of Lemma 6.3]{av2} and \cite[proof of Lemma 6.2]{pol}, we have 
\[\begin{array}{l}
\ds{\E\int_0^T\le|\le<B_{1,n}(u_{\e,n}(s),v_{\e,n}(s))-B_{1,n}(u_{\e,n}([s/\d_\e]\d_\e),\hat{v}_{\e,n}(s)),h\r>_H\r|\,ds}\\
\vs
\ds{\leq c_{T,n}\,|h|_H \le(1+|x|_{C^\theta(\bar{D})}^{(2\vee \theta) m_1}+|y|_E^{2\vee \theta}\r)\le(\d_\e^{\gamma(\theta)}+\sup_{t \in\,[0,T]}\le(\E\,\le|v_{\e,n}(t)-\hat{v}_{\e,n}(t)\r|^2_E\r)^{\frac 12}\r).}
\end{array}\]
Therefore, in view of Lemma \ref{lemma63}, from \eqref{new84}
\[\begin{array}{l}
\ds{\limsup_{\e\to 0}\,\E\sup_{t \in\,[0,T]}\le|\int_0^t\le<B_{1,n}(u_{\e,n}(s),v_{\e,n}(s))-\bar{B}_n(u_{\e,n}(s)),h\r>_H\,ds\r|}\\
\vs
\ds{=\limsup_{\e\to 0}\,\E\sup_{t \in\,[0,T]}\le|\int_0^t\le<B_{1,n}(u_{\e,n}([s/\d_\e]\d_\e),\hat{v}_{\e,n}(s))-\bar{B}_n(u_{\e,n}(s)),h\r>_H\,ds\r|.}
\end{array}\]

Again, as in \cite[proof of Lemma 6.3]{av2} and \cite[proof of Lemma 6.2]{pol}, we have 
 \[\begin{array}{l}
\ds{\E\sup_{t \in\,[0,T]}\le|\int_0^t\le<B_{1,n}(u_{\e,n}([s/\d_\e]\d_\e),\hat{v}_{\e,n}(s))-\bar{B}_n(u_{\e,n}(s)),h\r>_H\,ds\r|}\\
\vs
\ds{\leq \sum_{k=0}^{[T/\d_\e]}\E\,\le|\int_{k\d_\e}^{(k+1)\d_\e}\le<B_{1,n}(u_{\e,n}([s/\d_\e]\d_\e),\hat{v}_{\e,n}(s))-\bar{B}_n(u_{\e,n}(k\d_\e)),h\r>_H\,ds\r|}\\
\vs
\ds{+ c_{T,n}\,|h|_H\le(1+|x|_{C^\theta(\bar{D})}^{m_1}+|y|_E\r)[T/\d_\e]\,\d_\e^{\gamma(\theta)+1},}
\end{array}\]
so that we have to show that

\begin{equation}
\label{new85}
\lim_{\e\to 0} \sum_{k=0}^{[T/\d_\e]}\E\,\le|\int_{k\d_\e}^{(k+1)\d_\e}\le<B_{1,n}(u_{\e,n}([s/\d_\e]\d_\e),\hat{v}_{\e,n}(s))-\bar{B}_n(u_{\e,n}(k\d_\e)),h\r>_H\,ds\r|=0.
\end{equation}
If we set $\zeta_\e:=\d_\e/\e$, we have
\[\begin{array}{l}
\ds{\E\,\le|\int_{k\d_\e}^{(k+1)\d_\e}\le<B_{1,n}(u_{\e,n}([s/\d_\e]\d_\e),\hat{v}_{\e,n}(s))-\bar{B}_n(u_{\e,n}(k\d_\e)),h\r>_H\,ds\r|}\\
\vs
\ds{=\E\,\le|\int_{0}^{\d_\e}\le<B_{1,n}(u_{\e,n}([s/\d_\e]\d_\e),\hat{v}_{\e,n}(k\d_\e s))-\bar{B}_n(u_{\e,n}(k\d_\e)),h\r>_H\,ds\r|
}\\
\vs
\ds{=\E\,\le|\int_{0}^{\d_\e}\le<B_{1,n}(u_{\e,n}([s/\d_\e]\d_\e),\tilde{v}_n^{u_{\e,n}(k\d_\e),v_{\e,n}(k\d_\e)}(s/\e)-\bar{B}_n(u_{\e,n}(k\d_\e)),h\r>_H\,ds\r|}\\
\vs
\ds{=\d_\e\E\,\le|\frac 1{\zeta_\e}\int_{0}^{\zeta_\e}\le<B_{1,n}(u_{\e,n}([s/\d_\e]\d_\e),\tilde{v}_n^{u_{\e,n}(k\d_\e),v_{\e,n}(k\d_\e)}(s)-\bar{B}_n(u_{\e,n}(k\d_\e)),h\r>_H\,ds\r|,}
\end{array}\]
where $\tilde{v}_n^{u_{\e,n}(k\d_\e),v_{\e,n}(k\d_\e)}(s)$ is the solution of the fast motion equation \eqref{fast}, with frozen slow component $u_{\e,n}(k\d_\e)$ and initial datum $v_{\e,n}(k\d_\e)$ and noise $\tilde{w}^{Q_2}$ independent of both of them. According to \eqref{new68}, \eqref{stima2} and \eqref{new2}, this yields
\[\begin{array}{l}
\ds{\E\,\le|\int_{k\d_\e}^{(k+1)\d_\e}\le<B_{1,n}(u_{\e,n}([s/\d_\e]\d_\e),\hat{v}_{\e,n}(s))-\bar{B}_n(u_{\e,n}(k\d_\e)),h\r>_H\,ds\r|}\\
\vs
\ds{\leq \d_\e \frac c{\zeta_\e}\le(1+\E\,|u_{\e,n}(k\d_\e)|^{\kappa_1}_E+\E\,|v_{\e,n}(k\d_\e)|^{\kappa_2}_E\r)|h|_1+\E\,\a(\zeta_\e,u_{\e,n}(k\d_\e)).}
\end{array}\]
Now, the family 
\[\le\{u_{\e,n}(k\d_\e)\,:\,\e>0,\ n \in\,\nat,\ k=0,\ldots,[T/\d_\e]\r\},\]
is tight. Then for any $\eta>0$ there exists a compact set $K_\eta\subset E$ such that
\[\Pro\le(u_{\e,n}(k\d_\e) \in\,K_\eta^c\r)\leq \eta.\]
Therefore, due to \eqref{fin1011} we have
\[\begin{array}{l}
\ds{\E\a(\zeta_\e,u_{\e,n}(k\d_\e))}\\
\vs
\ds{=\E\le(\a(\zeta_\e,u_{\e,n}(k\d_\e))\,;\,u_{\e,n}(k\d_\e) \in\,K_\eta\r)+\E\le(c\le(1+|u_{\e,n}(k\d_\e)|_E^{m_1}\r)\,;\,u_{\e,n}(k\d_\e) \in\,K^c_\eta\r)}\\
\vs
\ds{\leq \sup_{x \in\,K_\eta}\a(\zeta_\e,x)+\sqrt{\eta}\,c\le(1+\le(\E\,|u_{\e,n}(k\d_\e)|_E^{2m_1}\r)^{\frac 12}\r).}
\end{array}\]
Thanks to  \eqref{fin1010}, we can conclude that
\[\limsup_{\e\to 0}\E\,\a(\zeta_\e,u_{\e,n}(k\d_\e))\leq c\le(1+|x|_E^{\kappa}+|y|_E^\kappa\r)\sqrt{\eta},\]
for some $\kappa>0$, and, due to the arbitrariness of $\eta$ this implies 
 \eqref{new85}.

\end{document}